\makeatletter \@namedef{subjclassname@2010}{
  \textup{2020} Mathematics Subject Classification}
\newtheorem{thm}{Theorem}[section]
\newtheorem{cor}[thm]{Corollary}
\newtheorem{lem}[thm]{Lemma}
\newtheorem{pro}[thm]{Proposition}
\theoremstyle{remark}
\newtheorem*{rema}{\textbf{Remark}}
\theoremstyle{definition}
\newtheorem{exa}[thm]{\textbf{Example}}
\newcommand{\im}{\operatorname{Im}}
\newcommand{\ran}{\operatorname{ran}}
\newcommand{\Real}{\operatorname{Re}}
\newcommand{\R}{\mathbb{R}}
\newcommand{\N}{\mathbb{N}}
\newcommand{\C}{\mathbb{C}}
\begin{document}

\title[Properties of $p(T)$, $TT^*$, and $T^*T$]{Certain properties involving the unbounded operators $p(T)$, $TT^*$, and $T^*T$; and some applications to powers and $nth$ roots of unbounded operators}
\author[M. H. Mortad]{Mohammed Hichem Mortad}

\date{}
\keywords{Closed operator; Symmetric operator; Self-adjoint
operator; Normal operator; Quasinormal operator; Hyponormal
operator;  Operator polynomials; Spectrum; Square roots of
operators; $nth$ roots of operators.}

\subjclass[2010]{Primary 47B25. Secondary 47B15, 47A05, 47A10,
47B20, 47A08}

\address{Laboratory of Mathematical Analysis and Applications. Department of
Mathematics, University of Oran 1, Ahmed Ben Bella, B.P. 1524, El
Menouar, Oran 31000, Algeria.}

\email{mhmortad@gmail.com, mortad.hichem@univ-oran1.dz.}

\begin{abstract}
In this paper, we are concerned with conditions under which
$[p(T)]^*=\overline{p}(T^*)$, where $p(z)$ is a one-variable complex
polynomial, and $T$ is an unbounded, densely defined, and linear
operator. Then, we deal with the validity of the identities
$\sigma(AB)=\sigma(BA)$, where $A$ and $B$ are two unbounded
operators. The equations $(TT^*)^*=TT^*$ and $(T^*T)^*=T^*T$, where
$T$ is a densely defined closable operator, are also studied. A
particular interest will be paid to the equation $T^*T=p(T)$ and its
variants. Then, we have certain results concerning $nth$ roots of
classes of normal and nonnormal (unbounded) operators. Some further
consequences and counterexamples accompany our results.
\end{abstract}

\maketitle

\section{Preliminaries}

First, we assume readers have some familiarity with linear bounded
and unbounded operators on Hilbert spaces. Some useful references
are \cite{Mortad-Oper-TH-BOOK-WSPC}, \cite{Mortad-cex-BOOK}, and
\cite{SCHMUDG-book-2012}. Classical definitions and notations follow
those in \cite{SCHMUDG-book-2012}. We do recall the definition of
the spectrum that will be used here (as this could be different
elsewhere): Let $A$ be an operator on a complex Hilbert space $H$.
The resolvent set of $A$, denoted by $\rho(A)$, is defined by
\[\rho(A)=\{\lambda\in\C:~\lambda I-A\text{ is bijective and }(\lambda I-A)^{-1}\in B(H)\}.\]
The  complement of $\rho(A)$, denoted by $\sigma(A)$, i.e.,
\[\sigma(A)=\C\setminus \rho(A)\]
is called the spectrum of $A$.

Therefore, if $A$ is a linear operator that is not closed, then
necessarily $\sigma(A)=\C$.

We also recall the definitions of unbounded nonnormal operators.

Say that a linear operator $T$ is quasinormal, provided it is
closed, densely defined, and $TT^*T=T^*TT$ (as in \cite{Kaufman
closed oeprators 1983}). This condition was weakened to
$TT^*T\subset T^*TT$ in \cite{Jablonski et al 2014}, but not
$TT^*T\supset T^*TT$. By \cite{Bernau JAusMS-1968-square root}, the
quasinormality of $T$ is, in fact, equivalent to $T|T|=|T|T$. In
\cite{Stochel-Szafraniec-normal extensions II}, $T$ is quasinormal
iff $U|T|\subset |T|U$, where $T=U|T|$ is the usual polar
decomposition of $T$ in terms of partial isometries. By, e.g., Lemma
2.2 in \cite{Ota-q deformed quasinormal JOT 2002}, it is seen that
$U|T|\subset |T|U$ is equivalent to $U|T|=|T|U$.

A densely defined linear operator $A$ with domain $D(A)\subset H$,
is said to be subnormal when there are a Hilbert space $K$ with
$H\subset K$, and a normal operator $N$ with $D(N)\subset K$ such
that
\[D(A)\subset D(N)\text{ and } Ax=Nx \text{ for all } x\in D(A).\]

A densely defined $A$ is said to be formally normal if
\[\|Ax\|=\|A^*x\|,\forall x\in D(A)\subset D(A^*).\]

A densely defined operator $A$ with domain $D(A)$ is called
hyponormal provided
\[D(A)\subset D(A^*)\text{ and } \|A^*x\|\leq\|Ax\|,~\forall x\in D(A).\]

Recall that a linear operator $A:D(A)\subset H\to H$ is said to be
paranormal when
\[\|Ax\|^2\leq \left\|A^2x\right\|\|x\|\]
for all $x\in D(A^2)$. It is straightforward to check that a
hyponormal operator is paranormal. However, and unlike the class of
hyponormal operators, a densely defined paranormal operator need not
be closable. Also, the closure of a densely defined closable
paranormal operator fails, in general, to be paranormal. See
\cite{Daniluk-paranormals-non-closable} or
\cite{Mortad-paranormal-paper-three CEX CEXEX}. In the latter
reference, an example of a densely defined paranormal operator $T$
that satisfies $D(T^*)=\{0\}$ is provided. Also, readers could find
an example of a closed densely defined operator $T$ such that both
$T$ and $T^*$ are one-to-one and paranormal, yet $T$ is not normal
(cf. \cite{Ando-Paranormal-tensor-product-sums et al} and
\cite{Yamazaki-Yanagida-PARANORMAL}).

It is widely known that a normal operator is quasinormal, a
quasinormal operator is subnormal, a subnormal operator is
hyponormal, and a hyponormal operator is paranormal. It is also
known that one cannot go backward in the last implications, even for
bounded operators (see \cite{Mortad-cex-BOOK} for counterexamples).
Readers may consult, e.g., \cite{Szafraniec Normals subnormals and
an open question} to see how all those (and more classical) notions
interplay.

\section{Introduction}

Taking the adjoint of the product or the sum of densely defined
unbounded operators can be an arduous task in some cases, and it can
be a major impediment to advances in many proofs, which makes us
frustrated before such situations. Recall that when $A$, $B$ and
$AB$ are all densely defined operators, then $A^*B^*\subset (BA)^*$,
and the equality
\[(BA)^*=A^*B^*\]
holds true if, e.g., $B\in B(H)$. It also holds when $A^{-1}\in
B(H)$.

The following lemma, which will be needed below, was shown in
Corollary 1.7 in \cite{Hardt-Mennicken-OP-Th-ADv-APP}.

\begin{lem}\label{(AB)*=B*A* (BA)*=A*B* resolvent sets LEM}
Let $A$ and $B$ be two densely defined closed operators such that
$\sigma(AB)\neq \C$ and $\sigma(BA)\neq\C$. Then $AB$ and $BA$ are
two densely defined closed operators that obey
\[(AB)^*=B^*A^*\text{ and } (BA)^*=A^*B^*.\]
\end{lem}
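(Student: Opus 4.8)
The plan is to establish the two adjoint identities by a symmetric argument, so it suffices to prove $(AB)^* = B^*A^*$; the other identity follows by swapping the roles of $A$ and $B$. Since we always have the inclusion $B^*A^* \subset (AB)^*$ (from the general fact recalled in the introduction, applied with the ordered product $AB$), the real content is the reverse inclusion $(AB)^* \subset B^*A^*$. The starting observation is that the hypothesis $\sigma(BA)\neq \C$ guarantees there is a scalar $\lambda$ with $\lambda I - BA$ boundedly invertible; in particular $BA$ is closed, and after a harmless translation we may as well assume $0 \in \rho(BA)$, i.e. $(BA)^{-1} \in B(H)$. The same reasoning applied to $\sigma(AB)\neq\C$ lets us arrange (or at least use) that $AB$ is closed as well.

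The key step is to exploit $(BA)^{-1}\in B(H)$ to ``factor out'' the unbounded pieces. Writing $C = (BA)^{-1}$, one has $BAC = I$ on $H$ and $CBA \subset I$ on $D(BA)$, and since $C$ is bounded and everywhere defined, the adjoint rules for products with one bounded factor apply cleanly: $(BAC)^* = C^*(BA)^* $ and so on. The idea is to take an arbitrary $y \in D((AB)^*)$ and use the boundedness of $C$ together with the factorization of the identity to show that $A^* y$ makes sense and lies in $D(B^*)$ with $B^*A^*y = (AB)^*y$. Concretely, for $x \in D(AB)$ one manipulates $\langle ABx, y\rangle$ by inserting $C$ and its inverse appropriately, transferring the unboundedness onto the bounded resolvent, and then reads off the domain membership from the resulting bounded functional. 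This is essentially the Hardt--Mennicken argument, and the cleanest packaging is probably to first prove the auxiliary fact that if $S$ is closed, $R$ is bounded, and $0\in\rho(SR)$ or $0\in\rho(RS)$, then $(SR)^* = R^*S^*$ with full domain equality, and then apply it twice.

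I expect the main obstacle to be the bookkeeping of domains: showing not merely that $B^*A^*$ and $(AB)^*$ agree where both are defined, but that their domains coincide. Inclusions of the form $A^*B^* \subset (BA)^*$ are automatic, but promoting them to equalities is exactly where most ``obvious'' manipulations with unbounded operators fail, and it is precisely here that the spectral hypotheses do their work — they convert a one-sided inclusion into an invertibility that can be cancelled. A secondary point requiring care is verifying at the outset that $AB$ and $BA$ are genuinely densely defined and closed (not merely closable): density of $D(AB)$ is not automatic for unbounded $A,B$, but it follows once $0\in\rho(BA)$ because then $D(BA) = \ran C$-type considerations force the relevant domains to be large, and closedness is immediate from $\lambda I - AB$, resp. $\lambda I - BA$, having a bounded inverse. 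Once these structural facts are in hand, the adjoint identities drop out of the bounded-factor adjoint rule applied to the factorizations of the (shifted) identities.
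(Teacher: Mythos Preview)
The paper itself does not prove this lemma where it is stated (it is quoted from Hardt--Mennicken), but it does supply its own proof later as Proposition~\ref{(AB)*=B*A* (BA)*=A*B* resolvent sets IMPROVED LEM}, and that proof is quite different from yours. There one packages $A$ and $B$ into the off-diagonal matrix $T=\left(\begin{smallmatrix}0&A\\B&0\end{smallmatrix}\right)$, notes that $T^2=\left(\begin{smallmatrix}AB&0\\0&BA\end{smallmatrix}\right)$, uses a lemma of Hardt--Konstantinov--Mennicken to get $\sigma(T^2)\neq\C$ from the two spectral hypotheses, and then invokes Theorem~\ref{ADJ of p(T) resolvent set non empty THM} with $p(z)=z^2$ to obtain $(T^2)^*=(T^*)^2$; the desired identities are read off entrywise. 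All the domain bookkeeping you worry about is thereby absorbed into the single-operator result Theorem~\ref{ADJ of p(T) resolvent set non empty THM}, and density and closedness of $AB$, $BA$ fall out for free.

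Your sketch instead aims at the original Hardt--Mennicken argument, which is a legitimate alternative, but there is a genuine gap in what you wrote. The step ``after a harmless translation we may as well assume $0\in\rho(BA)$'' is \emph{not} harmless: for $\lambda\neq 0$ the operator $BA-\lambda I$ is no longer a product of (shifts of) $B$ and $A$, so you cannot slide $\lambda$ to $0$ while keeping the product structure that the identity $(AB)^*=B^*A^*$ is about. The Hardt--Mennicken proof does not translate; it works at a fixed nonzero $\lambda$ in $\rho(AB)\cap\rho(BA)$ and uses the resolvent formula $(\lambda I-AB)^{-1}=\lambda^{-1}\bigl[I+A(\lambda I-BA)^{-1}B\bigr]$ (and its twin) to manufacture the needed bounded factors. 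Your proposed auxiliary fact about $(SR)^*=R^*S^*$ with one bounded factor does not apply directly either, since neither $A$ nor $B$ is bounded and you have not explained how ``apply it twice'' would go. If you want to pursue this route, you need to build the argument around the resolvent identity above rather than around a translation.
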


Some very related papers are
\cite{Azizov-Dijksma-closedness-prod-ADJ}, \cite{CG},
\cite{Gustafson-Mortad-I}, \cite{Sebestyen-Tarcsay-adj sum and
product}, and certain references therein. As for sums, readers may
consult, e.g., \cite{HK}, \cite{Mortad-CMB-2011}, and
\cite{Sebestyen-Tarcsay-adj sum and product}.

Notice that conditions like $\sigma(AB)\neq \C$ and
$\sigma(BA)\neq\C$ are strong, but they entail interesting
consequences as regards the adjoint of the product of two unbounded
operators and their spectrum. Remember that when $A,B\in B(H)$, then
$\sigma(AB)\neq \C$ and $\sigma(BA)\neq \C$. Some related papers are
\cite{azizov-denisov-philipp products self-adjoint MATH NACH},
\cite{Dehimi-Mortad-INVERT}, and \cite{Philipp-Ran-Wojtylak}.

Let $T$ be a densely defined closed operator. One of the most
fundamental properties in unbounded operator theory is the fact that
$T^*T$ (and also $TT^*$) is a densely defined, self-adjoint and
positive operator. This is a very well-known von-Neumann's theorem.
In fact, von-Neumann's result may be obtained via the so-called
Nelson's trick (cf. \cite{Thaller-Dirac-EQuation-Contains Nelson
trick}). In particular,
\[(TT^*)^*=TT^*\text{ and } (T^*T)^*=T^*T.\]
This result then enables us to define the very important notion of
the modulus of an operator which, and as it is known, intervenes in
the definition of the polar decomposition of an operator. We may
also find it in an abstract version of the damped wave equation of
the form $\ddot{u}+R\dot{u}+T^*Tu=0$ under certain conditions and in
some separable Hilbert space, where $T$ is a densely defined closed
operator and $R$ is some perturbation of $T^*T$. This equation has
been extensively investigated in \cite{Gesztesy et al. ANNALI. MATH.
AA*}. Operator theorists are also well aware of other uses of the
above result. Some related results may be found in
\cite{Boucif-Dehimi-Mortad}, , \cite{Mortad-cex-BOOK}, \cite{RS2},
and \cite{sebestyen-tarcsay-TT* has an extension}.

Recently, Z. Sebestyén and Zs. Tarcsay have discovered that if
$TT^*$ and $T^*T$ are both self-adjoint, then $T$ must be closed
(see \cite{Sebestyen-Tarcsay-TT* von Neumann T closed}, cf.
\cite{Dehimi-Mortad-squares-polynomials} and
\cite{Gesztesy-Schmudgen-AA*}). In this paper, we deal with the
validity of the identities $(TT^*)^*=TT^*$ and $(T^*T)^*=T^*T$,
where $T$ is a densely defined closable  operator. In the end, we
supply a few counterexamples that show interesting pathological
properties of $TT^*$ and $T^*T$.

But, before that, we will be first investigating the validity of the
equality $[p(T)]^*=\overline{p}(T^*)$, where $p(z)$ is a
one-variable complex polynomial, and $T$ is densely defined. Then, a
few consequences about powers of operators are obtained. Readers
might be interested in papers dealing with polynomials of closed (or
other classes of) operators such as: \cite{Dautray-Lions-VOL2},
\cite{Ota-Schmudgen-Matrix-UNBOUNDED},
\cite{SCHMUDG-1983-An-trivial-domain}, \cite{Stochel-IEOT-2002}, and
\cite{Stochel-Sza-domination-2003}.

In addition, we have some useful results concerning $nth$ roots of
certain classes of normal and nonnormal (unbounded) operators.

We close the paper with several interesting counterexamples related
to certain of the results to be shown here.

\section{On the adjoint of $p(T)$}

Before giving the general case, we present a simple consequence of
Lemma \ref{(AB)*=B*A* (BA)*=A*B* resolvent sets LEM}. First, recall
that it is well known that $(T^n)^*=T^{*n}$ for any $T\in B(H)$ and
any $n$. The unbounded version need not hold even when $n=2$. See,
e.g., Question 20.2.14 in \cite{Mortad-cex-BOOK} for a
counterexample. Remember that if $T^n$ (hence $T$) is densely
defined, then only $(T^*)^n\subset (T^n)^*$ holds. If $T$ is normal,
then one does have $(T^n)^*=T^{*n}$ for any $n$ (a proof may be
consulted in say \cite{Pietrzycki-Stochel-follow-up-2020-Conjecture
curto el al.}). Jab{\l}o\'{n}ski et al. constructed in
\cite{Jablonski et al 2014}, a quasinormal operator $T$ such that
$T^{*n}\subsetneq (T^n)^*$ for all $n\geq2$ (cf. Lemma 3.8 in
\cite{Dehimi-Mortad-squares-polynomials}).

By Lemma \ref{(AB)*=B*A* (BA)*=A*B* resolvent sets LEM}, it is seen
that when $T$ is closed and $\sigma(T^2)\neq\C$, then
$(T^2)^*=T^{*2}$. However, a naive direct generalization to the case
$T^n$, $n\in\N$, requires an induction argument, some standard
facts, and rather strong assumptions involving the closedness and
spectra of $T^p$, $p\leq n$.

In fact, we can prove a much stronger result, which will also allow
us to present a new proof of a slightly improved Lemma
\ref{(AB)*=B*A* (BA)*=A*B* resolvent sets LEM}.

Let $p(z)=a_nz^n+\cdots+a_1z+a_0$, where $a_n,\cdots, a_1,a_0$ are
complex numbers, and $z$ is a complex variable. Set
$\overline{p}(z)=\overline{a_n}z^n+\cdots+\overline{a_1}z+\overline{a_0}$.

It is easy to see that $D[p(T)]=D(T^n)$. So, when $T^n$ is densely
defined, then $T$ is a densely defined operator. But, in general,
only $[p(T)]^*\supset \overline{p}(T^*)$ holds. The coming result
seems therefore interesting.

\begin{thm}\label{ADJ of p(T) resolvent set non empty THM}
Let $n\in\N$ and let $T$ be a linear closable operator on a Hilbert
space such that $T^n$ is densely defined. If $\sigma[p(T)]\neq\C$,
then $[p(T)]^*=\overline{p}(T^*)$, where $p(z)$ is a given complex
polynomial of degree $n$.
\end{thm}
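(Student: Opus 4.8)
The plan is to reduce the polynomial identity to the product identity of Lemma \ref{(AB)*=B*A* (BA)*=A*B* resolvent sets LEM} by exploiting a factorization of $p(T)$ over $\C$. Since $\deg p = n$ and $a_n \neq 0$, write $p(z) = a_n(z - \lambda_1)\cdots(z - \lambda_n)$ with $\lambda_1,\dots,\lambda_n \in \C$ the roots of $p$ (with multiplicity). Correspondingly, one has the operator factorization $p(T) = a_n(T - \lambda_1 I)(T - \lambda_2 I)\cdots(T - \lambda_n I)$ on the domain $D(T^n)$; I would first check that this really is an equality of operators (not merely an inclusion), which is clear here because each factor $T - \lambda_j I$ has domain exactly $D(T)$ and the successive composition has domain exactly $D(T^n) = D[p(T)]$, with no domain shrinkage. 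The scalar $a_n$ is harmless, so it suffices to handle the monic case.

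Next I would deal with the spectral hypothesis. The key observation is that $\sigma[p(T)] \neq \C$ forces, via the spectral mapping principle for this factorization, that each $\sigma(T - \lambda_j I) = \sigma(T) - \lambda_j \neq \C$, i.e. $\sigma(T) \neq \C$, so in particular $T$ is closed; and moreover every partial product $(T - \lambda_j I)(T - \lambda_{j+1} I)\cdots(T - \lambda_k I)$ has spectrum $\neq \C$. The cleanest way to see the latter: pick $\mu \in \rho[p(T)]$; then $p(T) - \mu I$ is boundedly invertible, and since $p(T) - \mu I = a_n \prod_j (T - \mu_j I)$ for the appropriate roots $\mu_j$ of $p(z) - \mu$, bounded invertibility of a product of commuting-in-the-appropriate-sense closed factors propagates to each sub-product. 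I would make this precise by an induction on the number of factors, at each stage applying Lemma \ref{(AB)*=B*A* (BA)*=A*B* resolvent sets LEM} to the pair $A = (T - \lambda_1 I)$ and $B = (T - \lambda_2 I)\cdots(T - \lambda_n I)$: provided $\sigma(A)\neq\C$, $\sigma(B)\neq\C$, $\sigma(AB)\neq\C$, and $\sigma(BA)\neq\C$, the lemma yields $(AB)^* = B^*A^*$, and here $A^* = T^* - \overline{\lambda_1} I$, so that $[p(T)]^* = a_n^{\!*}\,$-scaled product unwinds to $\overline{p}(T^*)$ once the inductive hypothesis gives $B^* = \overline{q}(T^*)$ for the degree-$(n-1)$ cofactor $q$.

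The main obstacle is therefore the bookkeeping of spectra: verifying that $\sigma[p(T)]\neq\C$ propagates to $\sigma(BA)\neq\C$ and $\sigma(AB)\neq\C$ for the relevant sub-products, since Lemma \ref{(AB)*=B*A* (BA)*=A*B* resolvent sets LEM} needs these for \emph{both} orders of the product. Here I would use that $BA = a_n^{-1}p(T)$ up to the scalar and a reordering of commuting affine factors of $T$ — affine functions of a single operator $T$ commute as operators on $D(T^n)$ — so $BA$ and $AB$ are the same operator, equal to (a scalar multiple of) $p(T)$, whence both spectra equal $a_n^{-1}\sigma[p(T)] \neq \C$ automatically. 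This collapses the difficulty and is the crux: the single-variable nature of $p$ means all the factors and their products are rearrangements of one another, so the "$\sigma(AB)$ and $\sigma(BA)$" hypotheses of the lemma come for free from the one hypothesis $\sigma[p(T)]\neq\C$. After that, the induction closes routinely, giving $[p(T)]^* = \overline{p}(T^*)$, and as a by-product one recovers (and slightly strengthens) Lemma \ref{(AB)*=B*A* (BA)*=A*B* resolvent sets LEM} by specializing $p$.
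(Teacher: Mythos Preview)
Your plan has a genuine gap, and it lies precisely where you switch between two different factorizations without noticing. You introduce the roots $\lambda_1,\dots,\lambda_n$ of $p$ itself and set up the induction with $A=T-\lambda_1 I$, $B=q(T)=(T-\lambda_2 I)\cdots(T-\lambda_n I)$; but when you try to justify that ``every partial product has spectrum $\neq\C$'', you pass to the roots $\mu_1,\dots,\mu_n$ of $p(z)-\mu$ for some $\mu\in\rho[p(T)]$. The bounded invertibility you obtain concerns the $\mu_j$-factors and their sub-products, not the $\lambda_j$-ones. In your inductive step you need $\sigma[q(T)]\neq\C$ in order to invoke the degree-$(n-1)$ case and conclude $B^*=\overline q(T^*)$, and this is \emph{not} supplied: knowing $\sigma(T)\neq\C$ and $\sigma[p(T)]\neq\C$ does not force $\sigma[q(T)]\neq\C$ for an arbitrary degree-$(n-1)$ divisor $q$ of $p$ (for unbounded $T$, a polynomial can map a proper closed subset of $\C$ onto all of $\C$; think of $z\mapsto z^2$ on a closed half-plane). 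You correctly flag $\sigma(AB)$ and $\sigma(BA)$ as the ``main obstacle'' and dispatch them via $AB=BA=a_n^{-1}p(T)$, but the hypothesis you actually need for the recursion is $\sigma(B)\neq\C$, and that one is left hanging.

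The paper's proof avoids this by never using the $\lambda_j$'s at all: it picks $\lambda\in\rho[p(T)]$, factors $p(T)-\lambda I=(T-\mu_1 I)\cdots(T-\mu_n I)$, and shows (Lemma~\ref{Deh-MOrtad-Nachr-Main THM LEMMA}) that each $T-\mu_j I$ is \emph{boundedly invertible}. With every linear factor boundedly invertible, one does not even need Lemma~\ref{(AB)*=B*A* (BA)*=A*B* resolvent sets LEM}; the elementary identity $(BA)^*=A^*B^*$ valid whenever $A^{-1}\in B(H)$ peels the adjoint through the product factor by factor, giving $[p(T)-\lambda I]^*=(T^*-\overline{\mu_n}I)\cdots(T^*-\overline{\mu_1}I)=\overline p(T^*)-\overline\lambda I$. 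Your argument becomes correct (and essentially identical to the paper's) once you run the induction on the $\mu_j$-factorization of $p(T)-\lambda I$ rather than on the $\lambda_j$-factorization of $p(T)$; then each sub-product is itself boundedly invertible, so the spectral hypotheses you need are all trivially met.
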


\begin{rema}
Recall that the closedness of $T$ alone does not, in general, yield
the closedness of $T^n$, neither it implies its closability. For
instance, in Question 22.2.6 in \cite{Mortad-cex-BOOK}, there is
densely defined closed operator $T$ such that $T^2$ is also densely
defined but $(T^2)^*$ is not densely defined, i.e., $T^2$ is not
closable.
\end{rema}

The proof seems to need the coming lemma, whose proof is essentially
contained in the proof of Theorem 2.1 in
\cite{Dehimi-Mortad-squares-polynomials}.

\begin{lem}\label{Deh-MOrtad-Nachr-Main THM LEMMA}
Let $p$ be a complex polynomial of one variable of degree $n$ (whose
leading coefficient equals 1, WLOG). Assume that $A$ is a closable
operator in a Banach space such that
 $\sigma[p(A)]\neq\C$. If $\lambda\in \rho[p(A)]$ is such that
\[p(A)-\lambda I=(A-\mu_1 I)(A-\mu_2I)\cdots (A-\mu_nI)\]
for certain complex numbers complex numbers $\mu_1$, $\mu_2$,
$\cdots$, $\mu_n$, then all of $A-\mu_1 I$, $A-\mu_2I$, ..., and
$A-\mu_nI$ are boundedly invertible. In particular, $\sigma(A)\neq
\C$, and so $A$ is closed.
\end{lem}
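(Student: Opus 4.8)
The plan is to extract everything from the single bounded operator $B^{-1}$, using only the hypothesis that $A$ --- and not $A^2,\dots,A^n$ --- is closable. Set $B:=p(A)-\lambda I$; since $\lambda\in\rho[p(A)]$, we have $B^{-1}\in B(X)$, where $X$ denotes the underlying Banach space, and $D(B)=D(A^n)$. By assumption $B=(A-\mu_1 I)\cdots(A-\mu_n I)$ on $D(A^n)$, and since the linear factors $A-\mu_j I$ commute pairwise as operators on $D(A^n)$, any reordering of this product is the same operator; in particular, for each $i$, $B=(A-\mu_i I)\,r_i(A)=r_i(A)\,(A-\mu_i I)$ on $D(A^n)$, where $r_i(z):=\prod_{j\ne i}(z-\mu_j)$ is monic of degree $n-1$.

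The crux is the claim that $A^k B^{-1}\in B(X)$ for every $k=0,1,\dots,n-1$, which I would prove by induction on $k$; the case $k=0$ is the hypothesis. For the step, note $\ran(A^{k-1}B^{-1})\subseteq A^{k-1}(D(A^n))\subseteq D(A^{n-k+1})\subseteq D(A)$ (using $k\le n-1$), so $A^k B^{-1}=A\circ(A^{k-1}B^{-1})$ is everywhere defined on $X$; to see it is bounded, use the closed graph theorem. If $y_m\to y$ and $A^k B^{-1}y_m\to v$, put $w_m:=A^{k-1}B^{-1}y_m$; since $A^{k-1}B^{-1}$ is bounded by the inductive hypothesis, $w_m\to w:=A^{k-1}B^{-1}y$, where $w_m,w\in D(A)$ and $Aw_m\to v$, so the closability of $A$ forces $v=Aw=A^k B^{-1}y$. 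The essential subtlety --- and what lets the argument sidestep the fact that a closable $A$ may have non-closable powers --- is that the power is raised one unit at a time, so that at each stage the incoming sequence $w_m$ already converges and only one application of the closable $A$ is made. Consequently $q(A)B^{-1}\in B(X)$ for every polynomial $q$ with $\deg q\le n-1$; in particular $r_i(A)B^{-1}\in B(X)$ for each $i$.

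Next I would check that every $A-\mu_i I$ is a bijection from $D(A)$ onto $X$. Injectivity is a domain-climbing computation: if $(A-\mu_i I)x=0$, then $A^j x=\mu_i^j x$ for all $j$, so $x\in D(A^n)$ and $Bx=(p(\mu_i)-\lambda)x=0$ because $\mu_i$ is a zero of $p(z)-\lambda$, whence $x=B^{-1}Bx=0$. Surjectivity follows from $B=(A-\mu_i I)r_i(A)$ on $D(A^n)$: for $y\in X$, $B^{-1}y\in D(A^n)$ gives $r_i(A)B^{-1}y\in D(A)$ and $(A-\mu_i I)\bigl(r_i(A)B^{-1}y\bigr)=BB^{-1}y=y$.

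Combining the last two steps, $r_i(A)B^{-1}$ is a bounded right inverse of the injective operator $A-\mu_i I$, hence its two-sided inverse, so $(A-\mu_i I)^{-1}=r_i(A)B^{-1}\in B(X)$; that is, $\mu_i\in\rho(A)$ for every $i$. Therefore $\sigma(A)\ne\C$, and consequently $A$ is closed, since a non-closed operator has full spectrum. I expect the only genuinely delicate point to be the induction in the second paragraph; everything else is bookkeeping with the domains of the powers of $A$ and with the commutativity of the factors $A-\mu_i I$.
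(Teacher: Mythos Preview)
Your proof is correct. The induction showing $A^kB^{-1}\in B(X)$ for $k\le n-1$ is the heart of the matter, and you handle it properly: at each step you only apply one copy of the closable $A$ to an already-convergent sequence, so the closed graph theorem goes through without needing closability of any higher power. The injectivity argument (bootstrapping $x$ into $D(A^n)$ via $Ax=\mu_i x$) and the identification $(A-\mu_iI)^{-1}=r_i(A)B^{-1}$ are clean.

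The paper's own proof is different in presentation: it cites an earlier paper (\cite{Dehimi-Mortad-squares-polynomials}) for the fact that the \emph{leftmost} factor $A-\mu_1I$ is boundedly invertible, and then observes that since the linear factors commute one can cycle any $A-\mu_jI$ to the front and repeat. Your route is more self-contained: rather than peeling off one factor and appealing to a reference, you build the inverse of each factor explicitly as $r_i(A)B^{-1}$ and prove its boundedness directly via the $A^kB^{-1}$ induction. What your approach buys is transparency about exactly where closability (as opposed to closedness) of $A$ enters --- namely, one application at a time in the closed graph step --- which is the subtle point the lemma is designed to capture. The paper's approach is shorter on the page but hides that mechanism inside the citation.
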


\begin{proof} By the fundamental
theorem of algebra, we know that there always exist complex numbers
$\mu_1$, $\mu_2$, $\cdots$, $\mu_n$ such that
\[p(z)-\lambda=(z-\mu_1)(z-\mu_2)\cdots (z-\mu_n),\]
where $z\in\C$. Hence
\[p(A)-\lambda I=(A-\mu_1 I)(A-\mu_2I)\cdots (A-\mu_nI)\]
with $D[p(A)-\lambda I]=D(A^n)$. In
\cite{Dehimi-Mortad-squares-polynomials}, it was shown that $A-\mu_1
I$ is boundedly invertible (hence $\sigma(A)\neq \C$, and so $A$ is
closed). But since
\[(A-\mu_1 I)(A-\mu_2I)\cdots (A-\mu_nI)=(A-\mu_2 I)(A-\mu_3I)\cdots (A-\mu_n I)(A-\mu_1I),\]
$A-\mu_2 I$ too is boundedly invertible. A similar argument gives
the bounded invertibility of the remaining terms. The proof is
complete.
\end{proof}

Now, we show Theorem \ref{ADJ of p(T) resolvent set non empty THM}:

\begin{proof}First, observe that since $T$ is closed and
$\rho(T)\neq\varnothing$, $p(T)$ is densely defined and closed (cf.
\cite{McIntosh H infity calculus D(An) dense!!}).

Let $\lambda$ be in $\C\setminus \sigma[p(T)]$. Write
\[p(z)-\lambda=(z-\mu_1)(z-\mu_2)\cdots (z-\mu_n),\]
where $\mu_1$, $\mu_2$, $\cdots$, $\mu_n$ are complex numbers. Then
\[\overline{p(z)-\lambda}=\overline{p}(\overline{z})-\overline{\lambda}=(\overline{z}-\overline{\mu_1})(\overline{z}-\overline{\mu_2})\cdots (\overline{z}-\overline{\mu_n}).\]
Symbolically,
\[p(T)-\lambda I=(T-\mu_1 I)(T-\mu_2I)\cdots (T-\mu_nI)\]
and
\[\overline{p}(T^*)-\overline{\lambda} I=(T^*-\overline{\mu_1} I)(T^*-\overline{\mu_2}I)\cdots (T^*-\overline{\mu_n}I).\]
Since each of $T-\mu_1 I$, $T-\mu_2I$, ..., and $T-\mu_nI$ is
boundedly invertible, we obtain
\begin{align*}
[p(T)]^*-\overline{\lambda} I&=(p(T)-\lambda I)^*\\
&=[(T-\mu_1 I)(T-\mu_2I)\cdots (T-\mu_nI)]^*\\
&=(T-\mu_nI)^*[(T-\mu_1 I)(T-\mu_2I)\cdots (T-\mu_{n-1}I)]^*\\
&=\cdots\\
&=(T-\mu_nI)^*\cdots (T-\mu_2I)^*(T-\mu_1I)^*\\
&=(T^*-\overline{\mu_n} I)\cdots(T^*-\overline{\mu_2}I)(T^*-\overline{\mu_1}I)\\
&=(T^*-\overline{\mu_1} I)(T^*-\overline{\mu_2}I)\cdots
(T^*-\overline{\mu_n}I)\\
&=\overline{p}(T^*)-\overline{\lambda} I.
\end{align*}
Accordingly, $[p(T)]^*=\overline{p}(T^*)$, as wished.
\end{proof}

\begin{cor}
Let $T$ be a densely defined linear closable operator on a Hilbert
space. If $\sigma[p(T)]\neq\C$, then
\[\overline{p(T)}=p(\overline{T}),\]
where $p(z)$ is a given complex polynomial.
\end{cor}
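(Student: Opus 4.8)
The plan is to deduce the corollary from Theorem~\ref{ADJ of p(T) resolvent set non empty THM} together with the standard relationship between closures and double adjoints. First I would observe that since $T$ is densely defined and closable, so is $p(T)$ whenever it is densely defined; and indeed $D[p(T)]=D(T^n)$ is dense here because $T$ being densely defined and closable with $\sigma[p(T)]\neq\C$ forces (via Lemma~\ref{Deh-MOrtad-Nachr-Main THM LEMMA}, applied to the factorization of $p(T)-\lambda I$ for some $\lambda\in\rho[p(T)]$) that $T$ is closed with $\rho(T)\neq\varnothing$, hence $D(T^n)$ is dense by the McIntosh-type density result already invoked in the proof of the theorem. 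So $p(T)$ is a densely defined closable operator, and $\overline{p(T)}=[p(T)]^{**}$.

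Next I would apply Theorem~\ref{ADJ of p(T) resolvent set non empty THM} twice. A first application gives $[p(T)]^*=\overline{p}(T^*)$. For the second application I need to pass to the polynomial $\overline{p}$ and the operator $T^*$: since $T$ is closed (as just noted) we have $T^*$ densely defined and closed, $(T^*)^n$ densely defined, and $\sigma[\overline{p}(T^*)]=\overline{\sigma[p(T)]}\neq\C$ because $\overline{p}(T^*)=[p(T)]^*$ and the spectrum of an adjoint is the complex conjugate of the spectrum. Hence the theorem applies again and yields
\[
\bigl[\overline{p}(T^*)\bigr]^*=\overline{\overline{p}}\,\bigl((T^*)^*\bigr)=p(\overline{T})=p(T),
\]
the last equality because $T$ is closed. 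Combining, $[p(T)]^{**}=\bigl[[p(T)]^*\bigr]^*=\bigl[\overline{p}(T^*)\bigr]^*=p(T)$, and therefore $\overline{p(T)}=[p(T)]^{**}=p(T)$. Strictly, the statement to prove reads $\overline{p(T)}=p(\overline{T})$; since we have shown $T=\overline{T}$ under the hypotheses, the two sides coincide and the identity follows.

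The main obstacle I anticipate is the bookkeeping around densities and domains rather than anything deep: one must be careful that $p(\overline{T})$, $\overline{p}(T^*)$ and $[p(T)]^*$ really are densely defined so that all the adjoints in sight are legitimate, and that the symbolic factorization manipulations used in the theorem's proof are still available for $\overline{p}$ and $T^*$. The key enabling facts are all in hand: Lemma~\ref{Deh-MOrtad-Nachr-Main THM LEMMA} upgrades the hypothesis $\sigma[p(T)]\neq\C$ to the closedness of $T$ (and nonemptiness of $\rho(T)$), which in turn secures density of $D(T^n)$; and the spectral mapping under adjunction, $\sigma(A^*)=\overline{\sigma(A)}$, lets the theorem be reused for the conjugate data. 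Once those are in place the proof is essentially a two-line invocation of Theorem~\ref{ADJ of p(T) resolvent set non empty THM} followed by the identity $\overline{S}=S^{**}$ for densely defined closable $S$.
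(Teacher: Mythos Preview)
Your proposal is correct and follows essentially the same route as the paper: apply Theorem~\ref{ADJ of p(T) resolvent set non empty THM} once to get $[p(T)]^*=\overline{p}(T^*)$, observe that $\sigma[\overline{p}(T^*)]\neq\C$, apply the theorem a second time to $T^*$ and $\overline{p}$, and conclude via $\overline{p(T)}=[p(T)]^{**}$. The only cosmetic difference is that you make the closedness of $T$ explicit and write the final answer as $p(T)$ before identifying it with $p(\overline{T})$, whereas the paper leaves the conclusion in the form $p(\overline{T})$ directly.
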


\begin{proof}By Theorem \ref{ADJ of p(T) resolvent set non empty
THM}, $[p(T)]^*=\overline{p}(T^*)$. Since $T^*$ is closed, and
$\sigma[(p(T))^*]\neq\C$, so to speak,
$\sigma[\overline{p}(T^*)]\neq\C$, it follows that
\[\overline{p(T)}=[p(T)]^{**}=[\overline{p}(T^*)]^*=\overline{\overline{p}}(\overline{T})=p(\overline{T}),\]
as desired.
\end{proof}

We have yet another consequence, which first appeared in Theorem 2.3
in \cite{Dehimi-Mortad-squares-polynomials} (cf.
\cite{Ota-Schmudgen-Matrix-UNBOUNDED},
\cite{Stochel-Sza-domination-2003}, and
\cite{Sebestyen-Tarcsay-self-adjoint squares}).

\begin{cor}\label{sigma(p(T)) real T symm T is s.a. CORO}
Let $p$ be a real polynomial of one variable of degree $n\geq 1$.
Assume that $T$ is a symmetric (not necessarily densely defined)
operator in a Hilbert space $H$ such that $p(T)$ is self-adjoint.
Then $f(T)$ is self-adjoint for any real-valued Borel function on
$\R$, where $D[f(T)]=\{x\in H:\int_{\R}|f(t)|^2d\langle
E_tx,x\rangle<\infty\}$ and $E_t$ is the associated spectral measure
with $T$.
\end{cor}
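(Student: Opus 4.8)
The plan is to reduce everything to proving that $T$ is self-adjoint. Indeed, once $T=T^*$ is known, the spectral theorem for self-adjoint operators produces the spectral measure $E_t$, and for \emph{any} real-valued Borel function $f$ on $\R$ the operator $f(T)$, defined on $\{x\in H:\int_{\R}|f(t)|^2\,d\langle E_tx,x\rangle<\infty\}$, is self-adjoint; this is exactly the asserted conclusion. So the whole content is: \emph{a symmetric $T$ with $p(T)$ self-adjoint, $p$ real of degree $n\geq 1$, must be self-adjoint.}

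First I would clear away the preliminaries. Since $p(T)$ is self-adjoint it is densely defined, and because $D[p(T)]=D(T^n)\subset D(T)$, the operator $T$ is densely defined (so that, although densely definedness is not assumed, it follows); being symmetric and densely defined, $T$ satisfies $T\subset T^*$ with $T^*$ closed, hence $T$ is closable. Also $\sigma[p(T)]\subset\R\neq\C$. Dividing $p$ by its (real, nonzero) leading coefficient alters neither the self-adjointness of $p(T)$ nor the spectral measure associated with $T$, so I may assume $p$ is monic; then Lemma \ref{Deh-MOrtad-Nachr-Main THM LEMMA} applies with $A=T$. (One could also observe via Theorem \ref{ADJ of p(T) resolvent set non empty THM} that $[p(T)]^*=\overline{p}(T^*)=p(T^*)$, i.e., $p(T)=p(T^*)$; but it is not clear how to extract $T=T^*$ from this alone, so I will not use it.)

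The core step is to exhibit a single non-real number $\mu$ for which both $T-\mu I$ and $T-\overline{\mu}I$ are boundedly invertible. Fix any $\lambda\in\C$ with $\im\lambda\neq 0$, so that $\lambda$ and $\overline{\lambda}$ both lie in $\rho[p(T)]$. Write $p(z)-\lambda=(z-\mu_1)(z-\mu_2)\cdots(z-\mu_n)$. Since $p$ has real coefficients it maps $\R$ into $\R$, so $\lambda\notin\R$ forces every $\mu_j\notin\R$; for the same reason $p(z)-\overline{\lambda}=(z-\overline{\mu_1})(z-\overline{\mu_2})\cdots(z-\overline{\mu_n})$. Applying Lemma \ref{Deh-MOrtad-Nachr-Main THM LEMMA} to the factorization of $p(T)-\lambda I$, and then to that of $p(T)-\overline{\lambda}I$, I obtain that every $T-\mu_jI$ and every $T-\overline{\mu_j}I$ is boundedly invertible. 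Taking $\mu:=\mu_1$ (which is non-real), both $T-\mu I$ and $T-\overline{\mu}I$ are boundedly invertible, in particular surjective.

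Finally I would invoke the standard criterion that a densely defined symmetric $T$ with $\ran(T-\mu I)=\ran(T-\overline{\mu}I)=H$ for some non-real $\mu$ is self-adjoint: given $y\in D(T^*)$, surjectivity of $T-\overline{\mu}I$ yields $x\in D(T)$ with $(T-\overline{\mu}I)x=(T^*-\overline{\mu}I)y$; since $T\subset T^*$, this gives $x-y\in\ker(T^*-\overline{\mu}I)=[\ran(T-\mu I)]^{\perp}=\{0\}$, so $y=x\in D(T)$, whence $D(T^*)=D(T)$ and $T=T^*$. This proves $T$ is self-adjoint, and the corollary follows as explained in the first paragraph. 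The main obstacle — and the only place where ``$p$ real'' and ``$p(T)$ self-adjoint'' are genuinely used — is the core step: one needs a non-real resolvent point of $p(T)$ \emph{and} the fact that for real $p$ the conjugated factorization of $p(T)-\overline{\lambda}I$ is available, which is what lets Lemma \ref{Deh-MOrtad-Nachr-Main THM LEMMA} return bounded inverses of $T-\mu I$ and $T-\overline{\mu}I$ simultaneously. I would also take care to verify, as in the second paragraph, that $T$ is densely defined and closable before Lemma \ref{Deh-MOrtad-Nachr-Main THM LEMMA} can legitimately be invoked.
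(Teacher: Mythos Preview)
Your proof is correct, and it takes a somewhat different route from the paper's. The paper first invokes the spectral mapping theorem for polynomials of closed operators to conclude that a chosen non-real $\alpha$ with $p(\alpha)=\lambda\notin\R$ lies in $\rho(T)$; it then factors $p(z)-\lambda=(z-\alpha)q(z)$, uses Theorem~\ref{ADJ of p(T) resolvent set non empty THM} to get $p(T)=p(T^*)$, and from the adjoint computation $[p(T)-\lambda I]^*=(T^*-\overline{\alpha}I)[q(T)]^*$ extracts $\ker(T^*-\alpha I)=\ker(T^*-\overline{\alpha}I)=\{0\}$, finishing via the deficiency-index criterion. You bypass both the spectral mapping theorem and the adjoint identity: you apply Lemma~\ref{Deh-MOrtad-Nachr-Main THM LEMMA} directly to the full factorizations of $p(T)-\lambda I$ and $p(T)-\overline{\lambda}I$ (the second being available precisely because $p$ has real coefficients), obtain outright bounded invertibility of $T-\mu I$ and $T-\overline{\mu}I$ for a non-real $\mu$, and conclude by the range criterion. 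Your argument is more self-contained and arguably shorter; the paper's version, on the other hand, illustrates how Theorem~\ref{ADJ of p(T) resolvent set non empty THM} feeds into the result and makes explicit the connection with deficiency indices.
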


\begin{proof} Since $p$ is a non-constant polynomial, we know that there exists an $\alpha\in \C\setminus\R$ such that
$p(\alpha):=\lambda\in\C\setminus\R$ (see
\cite{Stochel-Sza-domination-2003}). By the self-adjointness of
$p(T)$, $T$ is closed. Now, let $\lambda\not\in\sigma[p(T)]$. Then
by the spectral mapping theorem (see, e.g., \cite{Gindler spec map
THM} or Section 5.7 in \cite{Taylor-Book-1958-FUNC Ana}),
$\alpha\not\in\sigma(T)$.

Then, we may write
\[p(z)-\lambda=(z-\alpha)q(z)=q(z)(z-\alpha),\]
where $q(z)$ is a polynomial of degree $n-1$. Hence
\[p(T)-\lambda I=(T-\alpha I)q(T)=q(T)(T-\alpha I)\]
and
\[p(T^*)-\lambda I=(T^*-\alpha I)q(T^*)=q(T^*)(T^*-\alpha I)\]

Since $p(T)=[p(T)]^*=p(T^*)$ and $T-\alpha I$ is boundedly
invertible, it ensues that
\[p(T)-\overline{\lambda} I=[p(T)-\lambda I]^*=(T^*-\overline{\alpha} I)[q(T)]^*.\]
The self-adjointness of $p(T)$ with a glance at the basic criterion
for self-adjointness imply
\[\ker(p(T)-\lambda I)=\ker(p(T)-\overline{\lambda} I)=\{0\},\]
from which we derive
\[\ker(T^*-\alpha I)=\ker(T^*-\overline{\alpha} I)=\{0\}.\]
Thus $T$ is self-adjoint by the basic criterion for self-adjointness
given that $T$ is already closed and symmetric. The last statement
about the self-adjoint of $f(T)$ is a mere consequence of the
spectral theorem for unbounded self-adjoint operators.
\end{proof}

The next consequence is Corollary 19 in
\cite{Stochel-Sza-domination-2003}:

\begin{cor}
Let $p$ be a real polynomial of one variable of degree $n\geq 1$.
Assume that $T$ is a symmetric operator in a Hilbert space $H$ such
that $p(T)$ is essentially self-adjoint. Then $f(\overline{T})$ is
self-adjoint for any real-valued Borel function on $\R$, where
$D[f(\overline{T})]=\{x\in H:\int_{\R}|f(t)|^2d\langle
E_tx,x\rangle<\infty\}$ and $E_t$ is the associated spectral measure
with $\overline{T}$.
\end{cor}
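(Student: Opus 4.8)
The statement to prove is the last corollary: if $T$ is symmetric and $p(T)$ is essentially self-adjoint, then $f(\overline T)$ is self-adjoint for every real Borel $f$. The natural route is to reduce this to the previous corollary (Corollary \ref{sigma(p(T)) real T symm T is s.a. CORO}) applied to $\overline T$. The first thing I would record is that $\overline T$ is symmetric: this is standard, since $T\subset T^*$ forces $\overline T=T^{**}\subset T^*=(\overline T)^*$. So $\overline T$ is a closed symmetric operator, which puts us exactly in the hypotheses of Corollary \ref{sigma(p(T)) real T symm T is s.a. CORO} provided I can show $p(\overline T)$ is self-adjoint.

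The crux is therefore the identity $p(\overline T)=\overline{p(T)}$, together with the fact that ``$p(T)$ essentially self-adjoint'' means precisely $\overline{p(T)}$ is self-adjoint. For the inclusion $\overline{p(T)}\subset p(\overline T)$ one just uses that $p(\overline T)$ is closed (being a polynomial in a closed operator under the spectral-theoretic hypotheses that will hold here) and contains $p(T)$, hence contains its closure. For the reverse inclusion $p(\overline T)\subset \overline{p(T)}$, I would invoke the preceding Corollary on $\overline{p(T)}=p(\overline T)$: more efficiently, since $p(T)$ is essentially self-adjoint, $\overline{p(T)}$ is self-adjoint, hence $\sigma(\overline{p(T)})\subset\R\neq\C$, so by the Corollary to Theorem \ref{ADJ of p(T) resolvent set non empty THM} we get $\overline{p(T)}=p(\overline T)$ directly. (One must check $\overline{p(T)}$ makes sense, i.e. $p(T)$ is densely defined: indeed $D[p(T)]=D(T^n)$, and since $p(T)$ is assumed essentially self-adjoint it is in particular densely defined.)

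Once $p(\overline T)=\overline{p(T)}$ is self-adjoint and $\overline T$ is closed and symmetric, Corollary \ref{sigma(p(T)) real T symm T is s.a. CORO} applied with $T$ replaced by $\overline T$ yields immediately that $\overline T$ is self-adjoint and that $f(\overline T)$ is self-adjoint for every real-valued Borel $f$ on $\R$, with the stated domain given by the spectral measure $E_t$ of $\overline T$. That finishes the argument.

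**Main obstacle.** The delicate point is establishing $p(\overline T)=\overline{p(T)}$ cleanly, and in particular verifying that the hypothesis $\sigma[p(T)]\neq\C$ (equivalently $\sigma[\overline{p(T)}]\neq\C$) needed to invoke the earlier results really does follow from essential self-adjointness of $p(T)$ — it does, because a self-adjoint operator has real spectrum — but one has to be careful that the polynomial identities for operators compose in the right order and on the right domains (the subtlety with $D(T^n)$ versus $D(\overline T^{\,n})$), which is exactly the kind of domain bookkeeping the earlier Corollary was designed to absorb. I expect no genuinely new difficulty beyond assembling these pieces.
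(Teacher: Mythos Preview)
Your overall strategy---show $p(\overline T)=\overline{p(T)}$ and then apply Corollary~\ref{sigma(p(T)) real T symm T is s.a. CORO} to the closed symmetric operator $\overline T$---is exactly the paper's. The gap is in how you justify the identity $p(\overline T)=\overline{p(T)}$.

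You invoke the Corollary to Theorem~\ref{ADJ of p(T) resolvent set non empty THM}, whose hypothesis is $\sigma[p(T)]\neq\C$, and you claim this is ``equivalently $\sigma[\overline{p(T)}]\neq\C$''. That equivalence is false here. With the convention adopted in the paper, a non-closed operator has spectrum equal to $\C$; so if $p(T)$ is only essentially self-adjoint (and not already closed), then automatically $\sigma[p(T)]=\C$ even though $\sigma[\overline{p(T)}]\subset\R$. Worse, by Lemma~\ref{Deh-MOrtad-Nachr-Main THM LEMMA} the hypothesis $\sigma[p(T)]\neq\C$ forces $T$ itself to be closed, which is precisely what you do not yet know. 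The same circularity shows up in your remark that $p(\overline T)$ is ``closed (being a polynomial in a closed operator under the spectral-theoretic hypotheses that will hold here)'': polynomials in closed operators need not be closed, and the spectral hypotheses you are appealing to are the very conclusion you want.

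The paper sidesteps this entirely by citing an external result: since a symmetric operator is formally normal, the identity $p(\overline T)=\overline{p(T)}$ holds by \cite{Stochel-Sza-domination-2003}. From there $p(\overline T)$ is self-adjoint and Corollary~\ref{sigma(p(T)) real T symm T is s.a. CORO} finishes. If you want to avoid that citation, one honest route is to argue directly that $p(T)\subset p(\overline T)\subset p(T^*)=\overline p(T^*)\subset[p(T)]^*=\overline{p(T)}$ (using $\overline T\subset T^*$ and that $p$ is real), which at least shows $p(\overline T)$ is essentially self-adjoint with closure $\overline{p(T)}$; but to upgrade this to equality you still need closedness of $p(\overline T)$, and that is the genuine work your plan has not addressed.
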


\begin{proof}Since a
symmetric $T$ is formally normal,  $p(\overline{T})=\overline{p(T)}$
by \cite{Stochel-Sza-domination-2003}. Hence $p(\overline{T})$ is
self-adjoint. Thus, $\overline{T}$ is self-adjoint by Corollary
\ref{sigma(p(T)) real T symm T is s.a. CORO}.

\end{proof}

\section{On the equalities $(AB)^*=B^*A^*$ and $\sigma(AB)= \sigma(BA)$ for unbounded operators}

We begin by reproving Lemma \ref{(AB)*=B*A* (BA)*=A*B* resolvent
sets LEM} under slightly weaker assumptions, namely the closability
of $A$ and $B$ instead of their closedness.

\begin{pro}\label{(AB)*=B*A* (BA)*=A*B* resolvent sets IMPROVED LEM}
Let $A$ and $B$ be two densely defined closable operators in a
Hilbert space $H$, such that $\sigma(AB)\neq \C$ and
$\sigma(BA)\neq\C$. Then $A$ and $B$ are closed. Besides, $AB$ and
$BA$ are two densely defined closed operators that obey
\[(AB)^*=B^*A^*\text{ and } (BA)^*=A^*B^*.\]
Hence
\[(A^*B^*)^*=BA\text{ and } (B^*A^*)^*=AB.\]
\end{pro}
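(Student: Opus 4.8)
The plan is to reduce Proposition~\ref{(AB)*=B*A* (BA)*=A*B* resolvent sets IMPROVED LEM} to the already-available Lemma~\ref{(AB)*=B*A* (BA)*=A*B* resolvent sets LEM}, whose hypotheses demand \emph{closed} operators, by first upgrading the closability of $A$ and $B$ to genuine closedness. The key observation is that the hypothesis $\sigma(BA)\neq\C$ already forces $BA$ to be closed (an operator with nonempty resolvent set is automatically closed), and a closed operator factoring as a product ought to transmit a closedness property to its factors under bounded-invertibility. So the first step is: fix $\lambda\in\rho(BA)$, so that $BA-\lambda I=(B A-\lambda I)$ is boundedly invertible with $D(BA)=D(A)$ dense. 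I would like to run an argument in the spirit of Lemma~\ref{Deh-MOrtad-Nachr-Main THM LEMMA}, but that lemma is about $p(A)$ for a single operator; here we have two genuinely different operators $A$ and $B$.

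The cleaner route is this. First I would show $A$ is closed. Let $x_n\in D(A)$ with $x_n\to x$ and $Ax_n\to y$; I want $x\in D(A)$ and $Ax=y$. Since $B$ is densely defined and closable, $\overline B=B^{**}$ exists, but I cannot yet apply $B$ to $Ax_n$ unless $Ax_n\in D(B)$, which need not hold. This is the crux of the difficulty, so instead I would argue through the \emph{other} product: because $\sigma(AB)\neq\C$, pick $\mu\in\rho(AB)$; then $AB-\mu I$ is bijective from $D(AB)$ onto $H$ with bounded inverse, and $R:=(AB-\mu I)^{-1}\in B(H)$ has range exactly $D(AB)\subset D(B)$. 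The classical fact that $B$ is closable together with the bounded invertibility of the product should let me conclude that $B$ is in fact closed: if $y_n\to y$ and $By_n\to z$ with $y_n\in D(B)$, I would like to feed this through $A$, but again domain issues intrude. The robust trick, which I expect to be the engine of the proof, is the identity for inverses of products: when $AB-\mu I$ is boundedly invertible one writes, at least formally, $B=(A B-\mu I)^{-1}$ composed appropriately, or better, one uses that $\ker(AB-\mu I)=\{0\}$ and $\ran(AB-\mu I)=H$ to peel off factors exactly as in the proof of Lemma~\ref{Deh-MOrtad-Nachr-Main THM LEMMA}, where from bounded invertibility of a product $(A-\mu_1I)\cdots(A-\mu_nI)$ one extracts bounded invertibility of each factor by cyclic rearrangement. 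Here the analogous—and correct—statement is: if $AB-\mu I$ is boundedly invertible, then so is $BA-\mu I$ (a known fact: $\sigma(AB)\setminus\{0\}=\sigma(BA)\setminus\{0\}$ even in the unbounded setting, which is in fact one of the paper's themes), and, more to the point, from $\rho(AB)\neq\varnothing$ one deduces both $A$ and $B$ are closed by the same peeling argument applied to the two-factor product. Concretely: $AB-\mu I = A B - \mu I$; if this is boundedly invertible it is in particular closed and injective with full range, and standard results on factorization of closed boundedly invertible operators (e.g.\ the cited \cite{Hardt-Mennicken-OP-Th-ADv-APP} or \cite{Sebestyen-Tarcsay-adj sum and product}) give that $A$ and $B$ are closed.

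Once $A$ and $B$ are known to be closed, the remainder is immediate: Lemma~\ref{(AB)*=B*A* (BA)*=A*B* resolvent sets LEM} applies verbatim and yields that $AB$ and $BA$ are densely defined closed operators with $(AB)^*=B^*A^*$ and $(BA)^*=A^*B^*$. For the final two identities, I take adjoints once more: since $AB$ is closed, $(AB)^{**}=AB$, hence $(B^*A^*)^*=((AB)^*)^*=AB$; similarly, since $BA$ is closed, $(A^*B^*)^*=((BA)^*)^*=BA$. One small point to check is that $A^*B^*$ and $B^*A^*$ are densely defined so that taking their adjoints is legitimate, but this follows because $(AB)^*=B^*A^*$ is densely defined (being the adjoint of the densely defined closed operator $AB$), and symmetrically for $A^*B^*$.

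The main obstacle, as flagged above, is the first step—deducing closedness of $A$ and $B$ from $\sigma(AB)\neq\C$ and $\sigma(BA)\neq\C$ while only assuming closability—because one cannot naively chase Cauchy sequences through $A$ and $B$ separately due to the domain mismatch $\ran A\not\subset D(B)$ in general. The resolution is to exploit the algebraic factorization of the boundedly invertible operators $AB-\mu I$ and $BA-\lambda I$ and peel off factors, mirroring the cyclic-rearrangement device in the proof of Lemma~\ref{Deh-MOrtad-Nachr-Main THM LEMMA}; a closable operator that becomes a factor of a boundedly invertible product is forced to be closed. Everything after that is a routine double application of Lemma~\ref{(AB)*=B*A* (BA)*=A*B* resolvent sets LEM} and the biadjoint identity $S^{**}=S$ for closed $S$.
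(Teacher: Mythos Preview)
Your proposal has a genuine gap at the crucial first step: deducing that $A$ and $B$ are closed from $\sigma(AB)\neq\C$ and $\sigma(BA)\neq\C$. You correctly identify this as the main obstacle, but your resolution does not work. The ``peeling'' device of Lemma~\ref{Deh-MOrtad-Nachr-Main THM LEMMA} relies essentially on the fact that $p(A)-\lambda I$ factors as a product of \emph{commuting} translates $(A-\mu_1 I)\cdots(A-\mu_n I)$ of a single operator, which permits cyclic rearrangement. The operator $AB-\mu I$ admits no such factorization: it is not $(A-\alpha I)(B-\beta I)$ for any $\alpha,\beta$, and $A,B$ do not commute. Your appeal to ``standard results on factorization of closed boundedly invertible operators'' in \cite{Hardt-Mennicken-OP-Th-ADv-APP} is circular, since that is precisely Lemma~\ref{(AB)*=B*A* (BA)*=A*B* resolvent sets LEM}, which \emph{assumes} $A$ and $B$ closed. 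Nor does the bare statement ``a closable factor of a boundedly invertible product is closed'' hold in general: if $x_n\to x$ and $Ax_n\to y$ with $x_n\in D(A)$, there is no reason $x_n\in D(BA)$ or $Ax_n\in D(B)$, so the bounded invertibility of $BA-\lambda I$ cannot be invoked.

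The paper's proof supplies exactly the missing idea: form the $2\times 2$ operator matrix
\[
T=\begin{pmatrix}0&A\\B&0\end{pmatrix},\qquad T^2=\begin{pmatrix}AB&0\\0&BA\end{pmatrix},
\]
so that the two-operator product becomes the \emph{square of a single} closable operator $T$. One then needs $\sigma(T^2)\neq\C$, i.e.\ $\sigma(AB)\cup\sigma(BA)\neq\C$, which is not immediate from the separate hypotheses but follows from Lemma~2.4 in \cite{Hardt-Konstantinov-Spectrum-product}. Now Lemma~\ref{Deh-MOrtad-Nachr-Main THM LEMMA} (with $p(z)=z^2$) legitimately applies, giving $T$ closed---hence $A$ and $B$ closed---and Theorem~\ref{ADJ of p(T) resolvent set non empty THM} gives $(T^2)^*=T^{*2}$, from which the adjoint identities are read off entrywise. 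Your final paragraph (taking second adjoints, checking $B^*A^*$ and $A^*B^*$ are densely defined) is fine once closedness is in hand.
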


\begin{proof}Set
\[T=\left(
      \begin{array}{cc}
        0 & A \\
        B & 0 \\
      \end{array}
    \right)
\]
with $D(T)=D(A)\oplus D(B)$. Because $A$ and $B$ are closable, $T$
is closable, too. But
\[T^2=\left(
        \begin{array}{cc}
          AB & 0 \\
          0 & BA \\
        \end{array}
      \right).
\]
By assumption, $\sigma(AB)\neq \C$ and $\sigma(BA)\neq\C$. Lemma 2.4
in \cite{Hardt-Konstantinov-Spectrum-product} then yields
$\sigma(AB)\cup\sigma(BA)\neq\C$, i.e., it is seen that
$\sigma(T^2)\neq\C$. Hence $T$ closed, from which we may derive the
closedness of $A$ as well as that of $B$. So, $T^2$ is densely
defined, which forces both $AB$ and $BA$ to be densely defined.

Finally, since $T^*=\left(
      \begin{array}{cc}
        0 & B^* \\
        A^* & 0 \\
      \end{array}
    \right)$, and as $(T^2)^*=T^{*2}$, inspection of the
entries of the two corresponding operator matrices gives the desired
identities
\[(AB)^*=B^*A^*\text{ and } (BA)^*=A^*B^*,\]
as needed.

To obtain the last claim, just take adjoints, by remembering that
$AB$ and $BA$ are closed.
\end{proof}

The next similar result is stated without proof.

\begin{cor}
Let $T$ and $S$ be linear closable operators on a Hilbert space. Let
$p$ and $q$ be two one-variable complex polynomials of degrees $n$
and $m$, respectively. If $\sigma[p(T)]\neq\C$ and
$\sigma[q(S)]\neq\C$, $\sigma[p(T)q(S)]\neq\C$, and
$\sigma[q(S)p(T)]\neq\C$, then $p(T)q(S)$ and $q(S)p(T)$ are densely
defined, and
\[[p(T)q(S)]^*=\overline{q}(S^*)\overline{p}(T^*)\text{ and }[q(S)p(T)]^*=\overline{p}(T^*)\overline{q}(S^*).\]
\end{cor}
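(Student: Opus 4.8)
The plan is to deduce the statement from Proposition \ref{(AB)*=B*A* (BA)*=A*B* resolvent sets IMPROVED LEM} by setting $A:=p(T)$ and $B:=q(S)$. To do this I must first put myself in a position to apply that proposition: I need $p(T)$ and $q(S)$ to be \emph{densely defined closable} operators, and I need the two products $AB=p(T)q(S)$ and $BA=q(S)p(T)$ to have nonempty resolvent sets. The last requirement is free, since it is precisely the hypothesis $\sigma[p(T)q(S)]\neq\C$ and $\sigma[q(S)p(T)]\neq\C$. So the work is concentrated in producing closability/density and in identifying the adjoints of $p(T)$ and $q(S)$ individually.

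Here is the order I would carry out. First, apply Lemma \ref{Deh-MOrtad-Nachr-Main THM LEMMA} to the pair $(p,T)$: since $T$ is closable and $\sigma[p(T)]\neq\C$, each linear factor $T-\mu_iI$ (for a suitable $\lambda\in\rho[p(T)]$) is boundedly invertible, so $\sigma(T)\neq\C$ and $T$ is closed; likewise $S$ is closed with $\sigma(S)\neq\C$. Then, exactly as in the opening line of the proof of Theorem \ref{ADJ of p(T) resolvent set non empty THM} (McIntosh's density theorem applied to a closed operator with nonempty resolvent set), $D(T^n)=D[p(T)]$ is dense and $p(T)$ is closed, and similarly $q(S)$ is densely defined and closed. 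Invoking Theorem \ref{ADJ of p(T) resolvent set non empty THM} for $(p,T)$ and for $(q,S)$ now gives
\[[p(T)]^*=\overline{p}(T^*)\qquad\text{and}\qquad [q(S)]^*=\overline{q}(S^*).\]
With $A=p(T)$ and $B=q(S)$ thus two densely defined closed operators and $\sigma(AB)\neq\C$, $\sigma(BA)\neq\C$, Proposition \ref{(AB)*=B*A* (BA)*=A*B* resolvent sets IMPROVED LEM} applies verbatim and yields that $p(T)q(S)$ and $q(S)p(T)$ are densely defined and closed, with $(AB)^*=B^*A^*$ and $(BA)^*=A^*B^*$. Substituting the two boxed identities gives
\[[p(T)q(S)]^*=[q(S)]^*[p(T)]^*=\overline{q}(S^*)\,\overline{p}(T^*),\qquad [q(S)p(T)]^*=[p(T)]^*[q(S)]^*=\overline{p}(T^*)\,\overline{q}(S^*),\]
which is the claim.

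The only genuinely delicate point is establishing that $p(T)$ and $q(S)$ are \emph{densely defined}, since Proposition \ref{(AB)*=B*A* (BA)*=A*B* resolvent sets IMPROVED LEM} (and the block-matrix/von Neumann device behind it) takes this for granted. A nonempty resolvent set for $p(T)$ alone does not force $D(T^n)$ to be dense without some extra input; the right input is the combination of Lemma \ref{Deh-MOrtad-Nachr-Main THM LEMMA} (to extract $\rho(T)\neq\varnothing$) with the fact that $D(T^n)$ is dense whenever $T$ is closed and densely defined with $\rho(T)\neq\varnothing$. I therefore expect this density verification to be the main obstacle; once it is secured, the rest is bookkeeping, namely composing the individual adjoint identities in the correct order and noting that the hypotheses $\sigma[p(T)q(S)]\neq\C$ and $\sigma[q(S)p(T)]\neq\C$ are exactly the spectral hypotheses of Proposition \ref{(AB)*=B*A* (BA)*=A*B* resolvent sets IMPROVED LEM} for $A=p(T)$, $B=q(S)$.
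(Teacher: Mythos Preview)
Your proposal is correct and matches the intended argument: the paper states this corollary without proof, but its placement immediately after Proposition \ref{(AB)*=B*A* (BA)*=A*B* resolvent sets IMPROVED LEM} and the phrase ``similar result'' make clear that one is meant to combine Theorem \ref{ADJ of p(T) resolvent set non empty THM} (to identify $[p(T)]^*$ and $[q(S)]^*$) with Proposition \ref{(AB)*=B*A* (BA)*=A*B* resolvent sets IMPROVED LEM} applied to $A=p(T)$, $B=q(S)$, exactly as you do. Your care in first extracting closedness and density of $p(T)$, $q(S)$ via Lemma \ref{Deh-MOrtad-Nachr-Main THM LEMMA} and the McIntosh-type density argument is the right way to fill the gap the paper leaves implicit.
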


It is well-known that an expression like $AB=BA$, on some common
dense domain, does not always yield the strong commutativity of $A$
and $B$ (which are assumed to be unbounded and self-adjoint). This
is the famous Nelson's counterexample (see
\cite{Nelson-Analytic-vectors} or \cite{RS1}). Nelson's example is
developed in detail in \cite{Schmudgen-Operator-algebra}, pp
257-258. The simplest Nelson-like example in the literature is due
to K. Schm\"{u}dgen in \cite{Schmudgen-Nelson LIKE}. So, the next
consequence of Proposition \ref{(AB)*=B*A* (BA)*=A*B* resolvent sets
IMPROVED LEM} is worth stating.

\begin{cor}(Cf. \cite{DevNussbaum}, \cite{Mortad-OaM-2014})
Let $A$ and $B$ be two unbounded self-adjoint operators. If $AB=BA$
and $\sigma(AB)\neq\C$, then $A$ strongly commutes with $B$.
\end{cor}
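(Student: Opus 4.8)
The plan is to apply Proposition \ref{(AB)*=B*A* (BA)*=A*B* resolvent sets IMPROVED LEM} with the self-adjoint operators $A$ and $B$ playing the roles of the two closable operators. Since $A$ and $B$ are self-adjoint, they are in particular densely defined and closed (hence closable). The hypotheses $AB=BA$ on a common dense domain together with $\sigma(AB)\neq\C$ need to be converted into the two-sided spectral condition $\sigma(AB)\neq\C$ \emph{and} $\sigma(BA)\neq\C$ demanded by the Proposition; but because $AB=BA$ as operators (same domain, same action), the single assumption $\sigma(AB)\neq\C$ already delivers $\sigma(BA)=\sigma(AB)\neq\C$ for free. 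So the Proposition applies and yields $(AB)^*=B^*A^*=BA$, using $A^*=A$ and $B^*=B$.

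The next step is to upgrade the relation $(AB)^*=AB$ (which says $AB$ is self-adjoint, once we recall $AB=BA$) into strong commutativity of $A$ and $B$. Here I would invoke the standard criterion: if $A$ and $B$ are self-adjoint and the operator $AB$ (equivalently $BA$) is self-adjoint, then $A$ and $B$ commute strongly, i.e., their spectral projections commute, equivalently their resolvents commute, equivalently all bounded Borel functions of $A$ commute with all bounded Borel functions of $B$. This is a classical result; one reference path is through the theory developed around Nelson's and Devinatz--Nussbaum's work (cf. \cite{DevNussbaum}, \cite{Mortad-OaM-2014}), where self-adjointness of the product of two commuting self-adjoint operators is exactly the condition that rules out the Nelson-type pathology and forces strong commutativity. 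So the argument reduces to: (i) $AB$ self-adjoint, obtained from Proposition \ref{(AB)*=B*A* (BA)*=A*B* resolvent sets IMPROVED LEM}; (ii) invoke the Devinatz--Nussbaum-type theorem to conclude strong commutativity.

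The main obstacle, and the only place where care is genuinely needed, is step (ii): verifying that the hypotheses of the Devinatz--Nussbaum theorem are met. That theorem typically requires $AB$ to be self-adjoint \emph{and} essentially requires that $AB=BA$ holds on a dense set (so that the product is symmetric to begin with), both of which we have; but one must be attentive to whether the version being cited needs the domain of $AB$ to be a core for something, or needs $AB$ densely defined — the latter is guaranteed by Proposition \ref{(AB)*=B*A* (BA)*=A*B* resolvent sets IMPROVED LEM}, which explicitly asserts that $AB$ and $BA$ are densely defined and closed. Once those boxes are checked, the conclusion is immediate. I would therefore write the proof in two short sentences: first apply the Proposition to get $(AB)^*=BA=AB$, noting $\sigma(BA)=\sigma(AB)\neq\C$; then quote \cite{DevNussbaum} (or \cite{Mortad-OaM-2014}) to pass from the self-adjointness of $AB$ to the strong commutativity of $A$ and $B$.
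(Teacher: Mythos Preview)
Your proposal is correct and follows essentially the same route as the paper's proof: use $AB=BA$ to get $\sigma(BA)=\sigma(AB)\neq\C$, apply Proposition \ref{(AB)*=B*A* (BA)*=A*B* resolvent sets IMPROVED LEM} to obtain $(AB)^*=B^*A^*=BA=AB$, and then invoke \cite{DevNussbaum} to pass from the self-adjointness of $AB$ to strong commutativity. The paper's argument is the same, just compressed to three lines.
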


\begin{proof}By assumption, $\sigma(AB)\neq\C$ and
$\sigma(BA)\neq\C$. Hence $(AB)^*=BA=AB$, i.e., $AB$ is
self-adjoint. By, e.g. \cite{DevNussbaum}, $A$ and $B$ strongly
commute.
\end{proof}

It is widely known that
\[\sigma(AB)-\{0\}=\sigma(BA)-\{0\}\]
for any $A,B\in B(H)$, with $\dim H=\infty$ (cf. \cite{Deift} and
\cite{Hardt-Konstantinov-Spectrum-product}). Imposing certain
conditions on $A$ and/or $B$ gives the full equality
$\sigma(AB)=\sigma(BA)$. For instance, when $A$ is normal. See
\cite{Barraa-Boumazghour}. However, such a result was at least known
at the time of \cite{Cho-Curto-Huruya- sigma(Ab)=sigma(BA) A
normal}, and this was missed by some authors, including myself. See
\cite{Hladnik-Omladic-spectrum-product-PAMS-1988} and
\cite{Mortad-Oper-TH-BOOK-WSPC} for other instances of when this is
true, and \cite{Mortad-cex-BOOK} for related counterexamples. It
appears, however, that the most general condition guaranteeing the
equality of spectra is $\ker A=\ker A^*$ (or $\ker B=\ker B^*$).
This appeared in \cite{Dehimi-Mortad-INVERT}, in the case $B$ is in
$B(H)$ and $A$ is a densely defined, non-necessarily bounded, and
closed operator. Below we generalize this result to two unbounded
operators.

\begin{thm}\label{sigma(AB)=sigma(BA) A B TWO UNBD THM}
Let $A$ and $B$ be two densely defined closable operators in a
Hilbert space $H$, such that $\sigma(AB)\neq \C$ and
$\sigma(BA)\neq\C$. If $\ker (A^*)=\ker(A)$ and $\ker(B)=\ker(B^*)$,
then
\[\sigma(BA)=\sigma(AB).\]
\end{thm}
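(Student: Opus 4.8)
The plan is to reduce everything to Proposition \ref{(AB)*=B*A* (BA)*=A*B* resolvent sets IMPROVED LEM} plus a separate analysis of the point $0$. Since $\sigma(AB)\neq\C$ and $\sigma(BA)\neq\C$, that Proposition already tells us that $A$ and $B$ are closed, that $AB$ and $BA$ are densely defined and closed, and — this is what will make the kernel hypotheses usable — that $(AB)^{*}=B^{*}A^{*}$ and $(BA)^{*}=A^{*}B^{*}$. I would then split the verification of $\sigma(AB)=\sigma(BA)$ into the case $\lambda\neq 0$ and the case $\lambda=0$. For $\lambda\neq 0$ the kernel conditions play no role: once $AB$ and $BA$ are known to be closed, one has $\sigma(AB)\setminus\{0\}=\sigma(BA)\setminus\{0\}$ by the classical commutation principle for spectra of products (see, e.g., \cite{Hardt-Konstantinov-Spectrum-product}); this can also be extracted from the identity $(AB-\lambda I)^{-1}=\lambda^{-1}\big(A(BA-\lambda I)^{-1}B-I\big)$ once one checks that $A(BA-\lambda I)^{-1}$ is everywhere defined and closed, hence bounded, so that the right-hand side is a genuine element of $B(H)$.

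The real content is the case $\lambda=0$, where I claim $0\in\sigma(AB)\iff 0\in\sigma(BA)$. Assume $0\notin\sigma(AB)$, i.e. $AB$ is boundedly invertible. Then $H=\ran(AB)\subseteq\ran(A)$, so $A$ is onto, hence $\ker A^{*}=(\overline{\ran A})^{\perp}=\{0\}$; the hypothesis $\ker A=\ker A^{*}$ then forces $\ker A=\{0\}$, and a closed bijection has a bounded inverse, so $A^{-1}\in B(H)$. Passing to adjoints, $(AB)^{*}=B^{*}A^{*}$ is boundedly invertible, so $B^{*}$ is onto; since $B$ is closed we have $\ker B^{**}=\ker B$, and using $\ker B=\ker B^{*}$ the same argument gives $(B^{*})^{-1}\in B(H)$, whence $B^{-1}\in B(H)$ as well. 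With $A^{-1},B^{-1}\in B(H)$ in hand I would check directly that $BA$ (closed, by the Proposition) is bijective: injectivity is immediate from $\ker B=\ker A=\{0\}$, while for surjectivity, given $z\in H$, one verifies that $y:=A^{-1}B^{-1}z$ lies in $D(BA)$ — here one uses $\ran(B^{-1})=D(B)$ and $\ran(A^{-1})=D(A)$, not that $A^{-1},B^{-1}$ invert on all of $H$ — and that $BAy=z$. Hence $(BA)^{-1}\in B(H)$, i.e. $0\notin\sigma(BA)$. Since the hypotheses are symmetric in $A$ and $B$, interchanging their roles yields the converse implication, and combining this with the $\lambda\neq 0$ case gives $\sigma(AB)=\sigma(BA)$.

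The step I expect to need the most care is not the $\lambda=0$ analysis — which is essentially bookkeeping once the Proposition is invoked — but justifying the $\lambda\neq 0$ part rigorously in the unbounded setting, since the formal resolvent identity has to be upgraded to an honest statement about a bounded, everywhere-defined operator; for this reason I would prefer to cite it as a known fact about closed products rather than reprove it. The only other thing to watch is the standing subtlety that $A^{-1}\in B(H)$ does not make $D(A)$ all of $H$, so the domain checks establishing that $BA$ is onto must be carried out via the ranges of $A^{-1}$ and $B^{-1}$ rather than by treating those inverses as if they inverted everything in sight.
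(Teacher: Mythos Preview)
Your proposal is correct and follows the same overall architecture as the paper's proof: invoke Proposition~\ref{(AB)*=B*A* (BA)*=A*B* resolvent sets IMPROVED LEM} for closedness and the adjoint identities, cite \cite{Hardt-Konstantinov-Spectrum-product} for $\sigma(AB)\setminus\{0\}=\sigma(BA)\setminus\{0\}$, and treat $\lambda=0$ separately via the kernel hypotheses. The only difference is in the mechanics of the $\lambda=0$ step: the paper extracts \emph{right} invertibility of $B$ (resp.\ $A^*$) from that of $BA$ (resp.\ $A^*B^*$) and then appeals to Theorem~2.3 of \cite{Dehimi-Mortad-INVERT} to upgrade right invertibility plus the kernel inclusion to full bounded invertibility, whereas you argue directly that $A$ and $B^*$ are surjective, use the kernel equalities to get injectivity, and finish with the closed graph theorem --- a more self-contained route that avoids the external citation but proves essentially the same lemma in passing.
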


\begin{rema}
The preceding theorem improves Equation 3.2 in Proposition 3.1 in
\cite{azizov-denisov-philipp products self-adjoint MATH NACH}.
\end{rema}

\begin{proof}By Proposition \ref{(AB)*=B*A* (BA)*=A*B* resolvent sets IMPROVED LEM}, $A$ and $B$ are closed. Then, Theorem 1.1 in
\cite{Hardt-Konstantinov-Spectrum-product} gives
$\sigma(AB)-\{0\}=\sigma(BA)-\{0\}$. What remains to show is:
\[BA \text{ is boundedly invertible} \Longleftrightarrow AB \text{ is boundedly invertible}.\]
Assume that $BA$ is boundedly invertible, and so $BA$ is right
invertible. That is, $BAT=I$ for some $T\in B(H)$. Since $AT$ is
closed and $D(AT)=H$, $AT\in B(H)$. So, $B$ is right invertible, but
because $\ker B\subset \ker B^*$, it follows that $B$ is boundedly
invertible (Theorem 2.3 in \cite{Dehimi-Mortad-INVERT}). Since
$(BA)^*$ is boundedly invertible, so is $A^*B^*=(BA)^*$. In
particular, $A^*B^*$ is right invertible, and a similar argument as
above yields the right invertibility of $A^*$. Since $\ker
A^*\subset \ker A$, Theorem 2.3 in \cite{Dehimi-Mortad-INVERT}
implies the bounded invertibility of $A^*$, or that of $A$. In other
words, $AB$ is boundedly invertible.

Conversely, suppose $AB$ is boundedly invertible. Hence
$(AB)^*=B^*A^*$ too is boundedly invertible. Now, apply the first
part of the proof together with the conditions $\ker B^*\subset \ker
B$ and $\ker A\subset \ker A^*$ to obtain that $A^*B^*$ is boundedly
invertible. Accordingly, $BA=(A^*B^*)^*$ is boundedly invertible,
establishing the result.
\end{proof}

\begin{cor}\label{spectrum AB =BA UNBD normal A B CORO}Let $A$ and $B$ be two unbounded normal operators such that $\sigma(AB)\neq \C$ and
$\sigma(BA)\neq\C$. Then
\[\sigma(BA)=\sigma(AB).\]
\end{cor}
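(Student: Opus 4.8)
The plan is to deduce Corollary~\ref{spectrum AB =BA UNBD normal A B CORO} from Theorem~\ref{sigma(AB)=sigma(BA) A B TWO UNBD THM} by merely checking that a normal operator satisfies all the hypotheses of that theorem. Recall that a normal operator $N$ is, by definition, closed and densely defined, so in particular $A$ and $B$ are densely defined closable. Normality gives the well-known equality $\|Nx\|=\|N^*x\|$ for all $x\in D(N)=D(N^*)$; applying this to $x\in D(N)$ with $Nx=0$ shows $N^*x=0$, and symmetrically, so that $\ker N=\ker N^*$. Hence the conditions $\ker A=\ker A^*$ and $\ker B=\ker B^*$ hold automatically.

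With the spectral hypotheses $\sigma(AB)\neq\C$ and $\sigma(BA)\neq\C$ assumed outright in the statement, every hypothesis of Theorem~\ref{sigma(AB)=sigma(BA) A B TWO UNBD THM} is in place, and I would simply invoke it to conclude $\sigma(BA)=\sigma(AB)$. So the proof is essentially a one-line application once the kernel equality for normals is recorded.

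I do not expect any real obstacle here; the only point that requires a word of care is confirming that $\ker N=\ker N^*$ for an \emph{unbounded} normal $N$, which is standard but should be cited or justified from the formal normality identity $\|Nx\|=\|N^*x\|$ on the common domain $D(N)=D(N^*)$. Thus a clean write-up reads: ``Since $A$ and $B$ are normal, they are densely defined and closed, and $\ker A=\ker A^*$, $\ker B=\ker B^*$. The conclusion then follows at once from Theorem~\ref{sigma(AB)=sigma(BA) A B TWO UNBD THM}.''
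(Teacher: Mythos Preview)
Your proposal is correct and is exactly the intended approach: the paper states this corollary immediately after Theorem~\ref{sigma(AB)=sigma(BA) A B TWO UNBD THM} without proof, so it is meant to follow at once by observing that normal operators are closed, densely defined, and satisfy $\ker N=\ker N^*$. Your justification of the kernel equality via $\|Nx\|=\|N^*x\|$ on $D(N)=D(N^*)$ is the standard one and suffices.
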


The following simple and related observation could interest some
readers:

\begin{pro}Let $A$ and $B$ be two self-adjoint operators such that
$B\in B(H)$ is positive and $BA$ is closed. Then
\[\sigma(AB)\neq\C\Longleftrightarrow \sigma(AB)\subset\R.\]
\end{pro}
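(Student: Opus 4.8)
The direction $\sigma(AB)\subset\R\Rightarrow\sigma(AB)\neq\C$ is trivial, so assume $\sigma(AB)\neq\C$; the plan is to compare $AB$ with the symmetric operator $S:=B^{1/2}AB^{1/2}$, where $B^{1/2}\in B(H)$ is the positive square root of $B$ and $D(S)=\{x\in H:B^{1/2}x\in D(A)\}$. I would first record a few consequences of the closedness of $BA$. Since $A=A^{*}$ and $B\in B(H)$, the operator $BA$ is densely defined (indeed $D(BA)=D(A)$), so $(BA)^{*}=A^{*}B^{*}=AB$ is densely defined and closed, and, $BA$ being closed, $(AB)^{*}=\overline{BA}=BA$; hence $\sigma(BA)=\overline{\sigma(AB)}$. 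In particular $\sigma(BA)\neq\C$, and since $\ker A^{*}=\ker A$ and $\ker B=\ker B^{*}$, Theorem \ref{sigma(AB)=sigma(BA) A B TWO UNBD THM} applies and gives $\sigma(AB)=\sigma(BA)$; combined with the identity just obtained, $\sigma(AB)=\overline{\sigma(AB)}$, so $\rho(AB)=\rho(BA)$ is a nonempty open set invariant under complex conjugation and therefore containing a non-real point. I would also note that $S$ is densely defined — its domain contains both $\ker B$ and $B^{1/2}D(AB)$, whose sum is dense in $H$ — and symmetric, since $\langle Sx,y\rangle=\langle AB^{1/2}x,B^{1/2}y\rangle=\langle x,Sy\rangle$ for all $x,y\in D(S)$.

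The heart of the argument is a pair of resolvent identities, valid for every $\lambda\neq 0$: (i) if $\lambda\in\rho(BA)$ then $\lambda\in\rho(S)$, with $(S-\lambda I)^{-1}=\lambda^{-1}\big(B^{1/2}A(BA-\lambda I)^{-1}B^{1/2}-I\big)$; and (ii) if $\lambda\in\rho(S)$ then $\lambda\in\rho(AB)$, with $(AB-\lambda I)^{-1}=\lambda^{-1}\big(AB^{1/2}(S-\lambda I)^{-1}B^{1/2}-I\big)$. These are the unbounded incarnations of the classical identity $(PQ-\lambda I)^{-1}=\lambda^{-1}\big(P(QP-\lambda I)^{-1}Q-I\big)$ with $Q=B^{1/2}$ and $P=B^{1/2}A$, respectively $P=AB^{1/2}$; but since $P$ is unbounded they must be verified directly. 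The first point is that the bracketed operators belong to $B(H)$: in (i), $(BA-\lambda I)^{-1}$ maps $H$ onto $D(BA)=D(A)$, so $A(BA-\lambda I)^{-1}$ is everywhere defined and closed (the closed operator $A$ composed with a bounded one), hence bounded by the closed graph theorem; likewise, using $D(S)=\{x:B^{1/2}x\in D(A)\}$, one sees $AB^{1/2}(S-\lambda I)^{-1}\in B(H)$ in (ii). With boundedness in hand, the remaining verification is bookkeeping with domains: using $B^{1/2}B^{1/2}=B$, the identities $BA(BA-\lambda I)^{-1}=I+\lambda(BA-\lambda I)^{-1}$ and $S(S-\lambda I)^{-1}=I+\lambda(S-\lambda I)^{-1}$, and the explicit forms of $D(S)$ and $D(AB)$, one checks that each candidate operator sends $H$ into the correct domain and is a two-sided inverse of $S-\lambda I$, respectively $AB-\lambda I$. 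I expect this verification to be the main obstacle: it is elementary but requires care, precisely because $B^{1/2}A$ and $AB^{1/2}$ are not themselves well-behaved (e.g.\ closed) operators, so one cannot simply invoke the bounded — or an off-the-shelf unbounded — version of the $\sigma(PQ)$ versus $\sigma(QP)$ theorem.

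Granting (i) and (ii), the proof closes quickly. Pick a non-real $\lambda_{0}$ in the open set $\rho(AB)=\rho(BA)$; by conjugation invariance $\overline{\lambda_{0}}\in\rho(BA)$ as well, so (i) yields $\lambda_{0},\overline{\lambda_{0}}\in\rho(S)$, i.e.\ $\ran(S-\lambda_{0}I)=\ran(S-\overline{\lambda_{0}}I)=H$. Since $S$ is symmetric and closed (it has nonempty resolvent set), the basic criterion for self-adjointness forces $S=S^{*}$, whence $\sigma(S)\subset\R$. Finally, (ii) gives $\C\setminus\R\subset\rho(S)\setminus\{0\}\subset\rho(AB)$, that is $\sigma(AB)\subset\R$, completing the proof.
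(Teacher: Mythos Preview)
Your proof is correct, and both arguments pivot on the same operator $S=\sqrt{B}\,A\,\sqrt{B}$, but the routes to its self-adjointness differ substantially. The paper gets there in one line: the closedness of $BA$ forces $\sqrt{B}\,A$ to be closed (Proposition~3.7 of \cite{Dehimi-Mortad-INVERT}), so $A\sqrt{B}=(\sqrt{B}\,A)^*$ is densely defined with $(A\sqrt{B})^*=\sqrt{B}\,A$, and then $(\sqrt{B}\,A\sqrt{B})^*=(A\sqrt{B})^*\sqrt{B}=\sqrt{B}\,A\sqrt{B}$ directly. With $S$ self-adjoint, a single citation of the Hardt--Konstantinov relation $\sigma(AB)\setminus\{0\}=\sigma(S)\setminus\{0\}$ finishes the job. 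Your path is more circuitous: you first invoke Theorem~\ref{sigma(AB)=sigma(BA) A B TWO UNBD THM} to manufacture a non-real $\lambda_0$ with both $\lambda_0,\overline{\lambda_0}\in\rho(BA)$, then push these into $\rho(S)$ by a hand-verified resolvent identity, deduce self-adjointness of $S$ from the basic criterion, and finally transfer $\rho(S)$ back to $\rho(AB)$ by a second resolvent identity. The virtue of your argument is that it makes the $\sigma(PQ)$-versus-$\sigma(QP)$ mechanism explicit rather than citing it; the cost is length, and since Theorem~\ref{sigma(AB)=sigma(BA) A B TWO UNBD THM} itself rests on the Hardt--Konstantinov theorem, your proof is not in the end more self-contained. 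The paper's direct adjoint computation is the shorter and cleaner route.
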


\begin{proof}
We are only concerned with the implication "$\Rightarrow$", so
suppose $\sigma(AB)\neq\C$. Let $\sqrt B$ be the unique positive
square root of $B$. Since $BA$ is closed on $D(A)$, by Proposition
3.7 in \cite{Dehimi-Mortad-INVERT}, $\sqrt BA$ is also closed. Since
$\sqrt BA$ is domain-dense, $(\sqrt BA)^*=A\sqrt B$. By the
closedness of $\sqrt BA$, $A\sqrt{B}$ is densely defined. Therefore,
\[\sqrt BA=(\sqrt BA)^{**}=(A\sqrt{B})^*.\]
Since $\sqrt B\in B(H)$, it is seen that
\[(\sqrt BA\sqrt B)^*=(A\sqrt B)^*\sqrt B=\sqrt B A \sqrt B,\]
which means that $\sqrt B A \sqrt B$ is self-adjoint. So
$\sigma(\sqrt B A \sqrt B)\subset\R$ or $\sigma(\sqrt B A \sqrt
B)\neq\C$. Since $\sigma(AB)\neq\C$ means that $\sigma(A\sqrt B\sqrt
B)\neq\C$, we infer that
\[\sigma(AB)-\{0\}=\sigma(A\sqrt B\sqrt
B)-\{0\}=\sigma(\sqrt B A \sqrt B)-\{0\}\] (utilizing Theorem 1.1 in
\cite{Hardt-Konstantinov-Spectrum-product}). Thus,
$\sigma(AB)\subset\R$, as needed.
\end{proof}

\begin{rema}
It is worth recalling that if the closedness of $BA$ is replaced by
the normality of $AB$ in the result above (resp. the normality of
$BA$), then $AB$ is self-adjoint (resp. $BA$ and $AB$ are
self-adjoint). In particular, $\sigma(AB)\subset\R$. See \cite{MHM1}
for a proof. Related results about unbounded operators may be
consulted in \cite{Dehimi-Mortad-Bachir-Comm-Closed-Symm-OPER},
\cite{Gustafson-Mortad-II}, \cite{Jung-Mortad-Stochel}, and
\cite{MHM7}.
\end{rema}

Similarly, we have:

\begin{pro}
Let $A$ and $B$ be two self-adjoint operators such that $B\in B(H)$
is positive. If $\sqrt B$ has a closed range, and has finite
dimensional kernel, then
\[\sigma(AB)\neq\C\Longleftrightarrow \sigma(AB)\subset\R.\]
\end{pro}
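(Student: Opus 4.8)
The plan is to mimic the argument of the preceding proposition, the only new ingredient being a replacement for the hypothesis ``$BA$ is closed'' by the condition on $\sqrt{B}$. As before, write $B=\sqrt{B}\,\sqrt{B}$ with $\sqrt{B}\in B(H)$ the unique positive square root, and note that since $\sqrt{B}$ has closed range, $\ran(\sqrt{B})=\ran(B)$ and $\ker(\sqrt{B})=\ker(B)$ is the finite-dimensional space we are assuming. The goal is again to produce a self-adjoint operator spectrally comparable to $AB$; the natural candidate is $\sqrt{B}\,A\,\sqrt{B}$ (or a suitable restriction of it), and once it is self-adjoint we conclude via $\sigma(AB)-\{0\}=\sigma(A\sqrt{B}\sqrt{B})-\{0\}=\sigma(\sqrt{B}A\sqrt{B})-\{0\}$, using Theorem 1.1 in \cite{Hardt-Konstantinov-Spectrum-product} exactly as in the previous proof, so that $\sigma(AB)\subset\R$.

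First I would decompose $H=\ker(\sqrt{B})\oplus\overline{\ran(\sqrt{B})}=N\oplus M$, where $N$ is finite-dimensional and $M=\overline{\ran(B)}$ is a reducing subspace for $B$ (and hence for $\sqrt{B}$). On $M$ the operator $\sqrt{B}$ restricts to an injective positive operator with dense range, so $(\sqrt{B}|_M)^{-1}$ is a (possibly unbounded) self-adjoint operator on $M$. Then I would argue that $\sqrt{B}A\sqrt{B}$ is essentially supported on $M$ in the sense that only the $M$-part of $A$ matters up to a finite-dimensional perturbation; concretely, I expect to show $\sqrt{B}A\sqrt{B}$ is a finite-rank (hence bounded and self-adjoint-preserving) perturbation away from the operator one gets by first projecting onto $M$. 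The key technical claim is that $\sqrt{B}A\sqrt{B}$ is self-adjoint: one direction of the needed domain identity is automatic since $\sqrt{B}\in B(H)$ gives $(\sqrt{B}A\sqrt{B})^*\supset \sqrt{B}A^*\sqrt{B}=\sqrt{B}A\sqrt{B}$, i.e. the operator is symmetric; for the reverse inclusion I would use the finite codimension of $\ran(\sqrt{B})$'s complement together with $\sigma(AB)\neq\C$ to rule out residual defect indices, in the spirit of the classical fact that a symmetric operator with a point of the resolvent set off the real axis is self-adjoint.

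More precisely, I would pick $\lambda\notin\sigma(AB)$; if $\lambda\neq0$ this transfers to $\lambda\notin\sigma(\sqrt{B}A\sqrt{B})$ by the nonzero-spectrum equality above, and if the only obstruction is $\lambda=0$ one may translate $A$ by a real scalar (replacing $A$ by $A-c$ and $AB$ by $AB-cB$ changes the problem harmlessly since $B\in B(H)$) to arrange a nonzero resolvent point, which already forces $\sqrt{B}A\sqrt{B}$ to be closed with a resolvent point in $\rho$; combined with symmetry this yields self-adjointness. The role of the finite-dimensional kernel hypothesis is exactly to guarantee that passing between $A$ on all of $H$ and its compression to $M$ costs only a bounded (indeed finite-rank) self-adjoint error, so that self-adjointness is not destroyed; without it the compression could fail to be densely defined or essentially self-adjoint. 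Once $\sqrt{B}A\sqrt{B}$ is known self-adjoint, $\sigma(\sqrt{B}A\sqrt{B})\subset\R$, and the nonzero-spectrum transfer gives $\sigma(AB)\subset\R$, completing the argument.

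The main obstacle I anticipate is the self-adjointness of $\sqrt{B}A\sqrt{B}$: establishing the nontrivial domain inclusion $D((\sqrt{B}A\sqrt{B})^*)\subset D(\sqrt{B}A\sqrt{B})$ requires controlling how $A$ interacts with the non-closed range of $\sqrt{B}$, and it is here that the ``closed range plus finite-dimensional kernel'' assumption must be used decisively — either through an explicit finite-rank perturbation argument reducing to the injective case on $M$, or through a defect-index count exploiting $\sigma(AB)\neq\C$. Everything else (the square-root factorization, the reducing-subspace structure, and the nonzero-spectrum transfer) is routine given the tools already assembled in the preceding proposition.
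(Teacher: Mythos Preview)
Your overall strategy matches the paper's: show that $\sqrt{B}\,A\,\sqrt{B}$ is self-adjoint and then run the nonzero-spectrum transfer exactly as in the preceding proposition. The paper, however, disposes of the self-adjointness of $\sqrt{B}\,A\,\sqrt{B}$ in one line by invoking Gustafson's theorem \cite{Gustafson-BAMS-PAP self-adjoint}: if $A$ is self-adjoint and $\sqrt{B}\in B(H)$ has closed range and finite-dimensional kernel, then $\sqrt{B}\,A\,\sqrt{B}$ is self-adjoint. The hypotheses in the statement are chosen precisely so that this result applies; no use of $\sigma(AB)\neq\C$ is made at this stage, and the rest is verbatim the previous proof.

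Your proposed route to self-adjointness, by contrast, has a genuine gap. You want to transfer a resolvent point of $AB$ to $\sqrt{B}\,A\,\sqrt{B}$ via Theorem~1.1 of \cite{Hardt-Konstantinov-Spectrum-product}, but as used in this paper that theorem already requires $\sigma(\sqrt{B}\,A\,\sqrt{B})\neq\C$ as a \emph{hypothesis}, so invoking it to produce a resolvent point for $\sqrt{B}\,A\,\sqrt{B}$ is circular. Even if you could obtain one resolvent point, a closed symmetric operator with a single point in its resolvent set need not be self-adjoint: its spectrum may still be a closed half-plane, and nothing in your argument manufactures a resolvent point in the opposite half-plane. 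The translation $A\mapsto A-c$ does not rescue this either, since $(A-c)B=AB-cB$ and $\sigma(AB-cB)$ bears no simple relation to $\sigma(AB)$ when $B\neq I$. Your decomposition $H=N\oplus M$ and the finite-rank perturbation idea are in spirit what underlies Gustafson's result, but turning that sketch into an actual proof of self-adjointness requires real work (the compression $P_MAP_M$ of a self-adjoint operator to a subspace of finite codimension is not automatically self-adjoint); the clean fix is simply to cite \cite{Gustafson-BAMS-PAP self-adjoint}.
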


\begin{proof}By \cite{Gustafson-BAMS-PAP self-adjoint}, $\sqrt B A \sqrt B$ is
self-adjoint. The rest of the proof is as above.
\end{proof}

\section{On $nth$ roots of some classes of unbounded operators}

It is plain that if $T\in B(H)$, then $T^2$ is self-adjoint if and
only if $T^2={T^*}^2$. This is not the case anymore when $T$ is
closed and densely defined. Indeed, there is a closed densely
defined operator $T$ such that
\[D(T^2)=D({T^*}^2)=\{0\}\]
(hence $T^2$ cannot be self-adjoint). Such an example may be found
in \cite{Dehimi-Mortad-CHERNOFF}. The situation is not better when
$D(T^2)$ is dense (see \cite{Dehimi-Mortad-squares-polynomials}),
neither it is for higher powers (witness \cite{Mortad-TRIVIALITY
POWERS DOMAINS}). Nonetheless, we have:

\begin{cor}\label{kerha 31/07/2022 COROLLARY}
Let $n\in\N$ be fixed and let $T$ be a linear closable operator on a
Hilbert space. Then
\[T^n\text{ is self-adjoint }\Longleftrightarrow T^n=T^{*n} \text{ and }\sigma(T^n)\neq\C.\]
\end{cor}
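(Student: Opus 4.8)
The plan is to reduce both directions to Theorem \ref{ADJ of p(T) resolvent set non empty THM}, applied to the monic polynomial $p(z)=z^n$ — for which $\overline{p}(z)=z^n$ and hence $\overline{p}(T^*)=T^{*n}$ — so that the content of the corollary amounts to checking that theorem's hypotheses in each case.

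For the forward implication I would argue as follows. If $T^n$ is self-adjoint, then it is densely defined (so $T$ itself is densely defined), it is closed, and its spectrum is contained in $\R$; in particular $\sigma(T^n)\neq\C$, which is half of what we want. Theorem \ref{ADJ of p(T) resolvent set non empty THM} with $p(z)=z^n$ then gives $(T^n)^*=T^{*n}$, and comparing with $(T^n)^*=T^n$ yields $T^n=T^{*n}$.

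For the converse, suppose $T^n=T^{*n}$ and $\sigma(T^n)\neq\C$. The key preliminary step is to show that $T^n$ is densely defined, so that Theorem \ref{ADJ of p(T) resolvent set non empty THM} becomes applicable. To this end I would pick $\lambda\in\rho(T^n)$, factor $z^n-\lambda=(z-\mu_1)\cdots(z-\mu_n)$, and invoke Lemma \ref{Deh-MOrtad-Nachr-Main THM LEMMA} (with the closable operator $T$) to conclude that each $T-\mu_iI$ is boundedly invertible; in particular $T$ is closed and $\rho(T)\neq\varnothing$. Then the density of $D(T^n)$ follows from the fact (quoted from \cite{McIntosh H infity calculus D(An) dense!!} in the proof of Theorem \ref{ADJ of p(T) resolvent set non empty THM}) that a closed, densely defined operator with nonempty resolvent has dense domain for all its powers. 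With $T^n$ densely defined, Theorem \ref{ADJ of p(T) resolvent set non empty THM} gives $(T^n)^*=T^{*n}$, and the hypothesis $T^{*n}=T^n$ upgrades this to $(T^n)^*=T^n$, i.e., $T^n$ is self-adjoint.

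The step I expect to be the main obstacle is precisely this density argument in the converse: neither $T^n=T^{*n}$ nor $\sigma(T^n)\neq\C$ yields dense definition of $T^n$ for free, and one genuinely has to route through the factorization and Lemma \ref{Deh-MOrtad-Nachr-Main THM LEMMA} to recover $\rho(T)\neq\varnothing$ before the density of $D(T^n)$ — and hence Theorem \ref{ADJ of p(T) resolvent set non empty THM} — can be invoked. Everything else is routine bookkeeping about adjoints and spectra.
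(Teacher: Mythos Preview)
Your proposal is correct and follows the same route as the paper: both directions reduce to Theorem \ref{ADJ of p(T) resolvent set non empty THM} with $p(z)=z^n$, yielding $(T^n)^*=T^{*n}$ once $\sigma(T^n)\neq\C$. The paper's proof is considerably terser---it simply asserts that $T^n=T^{*n}$ together with $\sigma(T^n)\neq\C$ gives $(T^n)^*=T^n$---and does not spell out the density of $D(T^n)$ in the converse; your explicit detour through Lemma \ref{Deh-MOrtad-Nachr-Main THM LEMMA} to secure $\rho(T)\neq\varnothing$ (and hence the density of $D(T^n)$) is exactly what underlies the opening line of the proof of Theorem \ref{ADJ of p(T) resolvent set non empty THM}, so you are just unpacking what the paper leaves implicit rather than taking a different path.
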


\begin{proof}If $T^n$ is self-adjoint, then $\sigma(T^n)\neq\C$ and
\[T^n=(T^n)^*=T^{*n}.\]
Conversely, $T^n=T^{*n}$ and $\sigma(T^n)\neq\C$ imply
$(T^n)^*=T^{n}$. Consequently, $T^n$ is self-adjoint.
\end{proof}

Before giving a result that generalizes Proposition 3.7 in
\cite{Dehimi-Mortad-squares-polynomials}, recall the ensuing result,
which might be known under different proofs.

\begin{lem}\label{Quasinormal all powers QUASI LEMMA}
Let $T$ be a quasinormal (unbounded) operator. Then $T^n$ is closed
and densely defined, for each $n\in\N$.
\end{lem}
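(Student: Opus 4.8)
The plan is to induct on $n$, using the characterization of quasinormality via the polar decomposition recalled in the Preliminaries, namely that $T = U|T|$ with $U|T| = |T|U$ (equivalently $U|T| \subset |T|U$). The base case $n=1$ is immediate since a quasinormal operator is by definition closed and densely defined. The key structural fact I would establish first is that if $T$ is quasinormal with polar decomposition $T = U|T|$, then $T^n = U^n |T|^n$, and moreover this operator is closed with domain $D(|T|^n)$, which is dense because $|T|$ is self-adjoint (so $|T|^n$ is self-adjoint, hence densely defined and closed by the spectral theorem).

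The heart of the argument is to verify $T^n = U^n|T|^n$ by induction and, simultaneously, that the composition does not lose density of domain or closedness. First I would record the commutation consequences: from $U|T| = |T|U$ one gets $U|T|^k = |T|^k U$ for every $k$ (apply the relation repeatedly, or invoke that $U$ commutes with every function of $|T|$, in particular with the spectral projections and hence with $|T|^k$ on the appropriate domain). Then, assuming $T^{n-1} = U^{n-1}|T|^{n-1}$ with domain $D(|T|^{n-1})$, I would compute
\[
T^n = T\,T^{n-1} = U|T|\,U^{n-1}|T|^{n-1} = U\,U^{n-1}|T|\,|T|^{n-1} = U^n|T|^n,
\]
where the middle step uses the commutation $|T|U^{n-1} = U^{n-1}|T|$ and one must check the domains match: the natural domain of the composition $T\,T^{n-1}$ is $\{x \in D(T^{n-1}) : T^{n-1}x \in D(T)\}$, and I would argue this equals $D(|T|^n)$ by pushing the vector through the isometric/partial-isometric factor $U^{n-1}$ (which preserves membership in $D(|T|)$ thanks to the commutation relation) so that the condition reduces to $|T|^{n-1}x \in D(|T|)$, i.e. $x \in D(|T|^n)$. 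Closedness of $U^n|T|^n$ then follows because $|T|^n$ is closed (self-adjoint, in fact) and $U^n$ is bounded, and a bounded operator composed on the left with a closed operator is closed; density of $D(T^n) = D(|T|^n)$ follows from self-adjointness of $|T|^n$.

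The step I expect to be the main obstacle is the careful bookkeeping of domains in the inductive identity $T^n = U^n|T|^n$ — specifically, proving the two domains are equal rather than one merely contained in the other, since sloppiness here is exactly what produces the well-known pathologies for powers of general closed operators alluded to earlier in the paper. The safe way to handle it is to use that $U$ maps $D(|T|^k)$ into itself: indeed for $x \in D(|T|^k)$, from $|T|^k U = U|T|^k$ (valid as an honest operator identity for quasinormal $T$, not just an inclusion) we get $Ux \in D(|T|^k)$ with $|T|^k Ux = U|T|^k x$. Iterating gives $U^{n-1}$ maps $D(|T|^{n-1})$ into $D(|T|^{n-1})$, and combined with $|T|$ commuting past $U^{n-1}$ one gets that $T^{n-1}x = U^{n-1}|T|^{n-1}x$ lies in $D(T) = D(|T|)$ exactly when $|T|^{n-1}x \in D(|T|)$. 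This pins down $D(T^n) = D(|T|^n)$ precisely, and the rest is routine. An alternative, if one prefers to avoid polar-decomposition domain chases, is to use the equivalent formulation $T|T| = |T|T$ together with Lemma \ref{(AB)*=B*A* (BA)*=A*B* resolvent sets LEM}-style adjoint manipulations, but the polar decomposition route seems cleanest.
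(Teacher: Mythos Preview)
Your approach is essentially the paper's (polar decomposition $T=U|T|=|T|U$, then $T^n=U^n|T|^n$, dense domain from self-adjointness of $|T|^n$), and your domain bookkeeping is in fact more careful than the paper's one-line computation. But there is a genuine error in the closedness step: the assertion ``a bounded operator composed on the left with a closed operator is closed'' is false. If $B\in B(H)$ and $A$ is closed, $BA$ need not be closed; e.g.\ take $A$ to be multiplication by $x$ on $L^2(\R)$ and $B$ multiplication by $(1+x^2)^{-1}$, so that $BA$ is bounded but defined only on the proper dense subspace $D(A)$, hence unclosed. The correct general fact is the opposite composition: $AB$ is closed whenever $A$ is closed and $B$ is bounded.

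The paper sidesteps this by recording \emph{both} identities $T^n=U^n|T|^n=|T|^nU^n$ and invoking closedness via the second form: $|T|^n$ is closed (self-adjoint) and $U^n\in B(H)$, so $|T|^nU^n$ is closed. Since you already have the full commutation $U|T|=|T|U$ (and hence $U^k|T|^m=|T|^mU^k$ on the appropriate domains), obtaining $T^n=|T|^nU^n$ costs nothing extra, and then closedness follows legitimately. With that one-line fix your argument is complete.
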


\begin{proof}Let $n\in\N$. Write $T=U|T|=|T|U$, where $U\in B(H)$ is a partial isometry. Then
\[T^n=(U|T|)^n=U^n|T|^n=|T|^nU^n.\]
Since $T$ is closed, $|T|$ is self-adjoint, as are its powers
$|T|^n$. Since $D(T^n)=D(|T|^n)$, $T^n$ is densely defined.

Since $|T|^nU^n$ is closed for each $n$, $T^n$ too is closed.
\end{proof}

Here is the promised generalization.

\begin{pro}\label{GHGHjjjklllm pro 000.}
Let $T$ be a quasinormal (unbounded) operator such that
$T^n={T^*}^n$ for some natural number $n$. Then $T^{n}$ is
self-adjoint, and $T$ is normal.

In particular, and when $n=2$, then $T$ is either self-adjoint or
skew-adjoint.
\end{pro}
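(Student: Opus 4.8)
The plan is to leverage Lemma \ref{Quasinormal all powers QUASI LEMMA} together with Corollary \ref{kerha 31/07/2022 COROLLARY} to get self-adjointness of $T^n$ almost for free, and then to upgrade from ``$T^n$ self-adjoint'' to ``$T$ normal'' by exploiting the polar decomposition of a quasinormal operator. First I would note that since $T$ is quasinormal, Lemma \ref{Quasinormal all powers QUASI LEMMA} tells us $T^n$ is closed and densely defined, so in particular $\sigma(T^n)\neq\C$ is available once we know $T^n$ has a nonempty resolvent set; but actually the hypothesis $T^n = T^{*n}$ already does the work: $T^{*n}\subset (T^n)^*$ always, and since $T^n$ is closed and densely defined, $(T^n)^* = (T^n)^*$ is densely defined, so $T^n \subset T^{*n}\subset (T^n)^*$ shows $T^n$ is symmetric. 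To conclude $T^n$ is self-adjoint I would invoke Corollary \ref{kerha 31/07/2022 COROLLARY}: it suffices to check $\sigma(T^n)\neq\C$. Here I would use that $T = U|T| = |T|U$ with $U$ a partial isometry, so $T^n = |T|^n U^n$, and combine the symmetry of $T^n$ with the self-adjointness of $|T|^n$; alternatively, a cleaner route is to observe that $|T|^n = (T^*T)^{n/2}$-type expressions and directly show $T^n$ coincides with a self-adjoint operator, or simply note that a closed symmetric operator $S$ with $S = S^*$-candidate $T^{*n}$ forces $T^n = (T^n)^{**} = T^{*n*}$, giving self-adjointness once density is settled.

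Next I would pass from $T^n$ self-adjoint to $T$ normal. Using $T = U|T| = |T|U$, write $T^n = |T|^n U^n$. Self-adjointness of $T^n$ together with $|T|^n$ self-adjoint and positive suggests that $U^n$ must be, in an appropriate sense, self-adjoint on the relevant subspace; since $U$ is a partial isometry commuting with $|T|$ (quasinormality in the form $U|T| = |T|U$, established in the Preliminaries via \cite{Stochel-Szafraniec-normal extensions II} and \cite{Ota-q deformed quasinormal JOT 2002}), $U$ also commutes with $|T|^n$ and with all spectral projections of $|T|$. The key point is then that $T$ normal is equivalent to $U$ being unitary on $\overline{\ran|T|} = \overline{\ran T^*} = (\ker T)^\perp$, i.e. to $U|T| = |T|U$ \emph{plus} $U^*U = UU^*$ on that subspace. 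I would extract this from $T^n = T^{*n}$: we have $|T|^n U^n = (|T|^n U^n)^* \supset U^{*n}|T|^n$, and using that $U$ commutes with $|T|$ this rearranges to $|T|^n U^n = |T|^n U^{*n}$ on a dense set, hence $U^n = U^{*n}$ on $\overline{\ran|T|}$ (cancelling the injective-on-its-range $|T|^n$), so $U^n$ is self-adjoint, thus $U^n$ is a self-adjoint partial isometry, i.e. a (difference of) orthogonal projections. From $U^n$ self-adjoint and $U$ commuting with everything in sight, I would deduce $U$ is normal, hence $U$ is a normal partial isometry commuting with $|T|$, which makes $T = U|T|$ normal by the standard characterization (a densely defined closed $T$ with polar decomposition $U|T|$ is normal iff $U$ is normal and $U|T| = |T|U$, e.g. via $TT^* = U|T|^2 U^* = |T|^2 UU^* = |T|^2 U^*U = T^*T$).

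For the case $n = 2$: now $U^2 = U^{*2} = (U^2)^*$ is a self-adjoint partial isometry, so $U^2$ is an orthogonal projection minus another, but more to the point $U^2$ is self-adjoint and $U$ is a normal partial isometry, so on $\overline{\ran|T|}$ the operator $U$ is unitary (a normal partial isometry is unitary on the orthocomplement of its kernel, and $\ker U = \ker T$, $\ker|T| = \ker T$ match up). Then $U$ unitary with $U^2$ self-adjoint forces $U^2 = U^2$ with spectrum in $\{+1, -1\}$; writing $U = U_+ \oplus U_-$ over the spectral subspaces where $U^2 = I$ resp. $U^2 = I$... more carefully: $U^2$ self-adjoint and unitary means $\sigma(U^2)\subseteq\{1,-1\}$, and since $U$ commutes with $|T|$ we can try to conclude $U = I$ or $U = -I$ on suitable reducing pieces; but the statement only claims $T$ is self-adjoint \emph{or} skew-adjoint, so I would actually argue: $T^2 = T^{*2}$ and $T$ normal give $(T^2)^* = T^2$, and for a normal operator $T^{*2} = (T^*)^2 = (T^*)^2$, so $T^2 = (T^*)^2$; applying the spectral theorem for the normal operator $T$, this says $z^2 = \bar z^2$ on $\sigma(T)$, i.e. $\sigma(T)\subseteq \R \cup i\R$, which is not yet enough — so instead I would use that $T$ normal and $T^2$ self-adjoint implies, by the spectral theorem, $\int z^2\, dE = \int \bar z^2\, dE$ forcing $z^2\in\R$ $E$-a.e., hence $E$ is supported on $\R\cup i\R$; then the spectral projection $P$ onto the part of $\sigma(T)$ in $\R$ reduces $T$, and $T = TP \oplus T(I-P)$ with $TP$ self-adjoint and $T(I-P)$ skew-adjoint. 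The claim ``self-adjoint or skew-adjoint'' (not a direct sum) must therefore be using connectedness or an extra hypothesis; I expect the main obstacle is exactly this last deduction, and I would resolve it by recalling that the paper's convention or an implicit irreducibility/indecomposability is not needed — rather, $U^2$ self-adjoint \emph{and} equal to $U^{*2}$ with $U$ a partial isometry whose square is already self-adjoint likely pins $U$ down to $U = U^*$ (so $T$ self-adjoint, as $T = U|T| = U^*|T|$ with the right domain) or $U = -U^*$ (so $T$ skew-adjoint), via the observation that $U^n = U^{*n}$ for $n=2$ combined with $U$ a partial isometry gives $U^* U^2 = U^* U^{*2}$, i.e. $U = U^*$ on $\ran U^*$ times a sign — the careful bookkeeping of kernels and ranges of the partial isometry is where the real work sits, and that is the step I would be most careful about.
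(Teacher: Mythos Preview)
Your approach diverges substantially from the paper's, and there are two genuine gaps.

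\textbf{Self-adjointness of $T^n$.} You plan to use Corollary~\ref{kerha 31/07/2022 COROLLARY}, which needs $\sigma(T^n)\neq\C$, but you never verify this; the sentence ``combine the symmetry of $T^n$ with the self-adjointness of $|T|^n$'' does not produce a point of the resolvent, and the ``cleaner route'' ending in $T^n=(T^n)^{**}=T^{*n*}$ is circular (it just restates $(T^n)^*=(T^{*n})^*$, which is what you want to identify with $T^n$). The paper does not go through $\sigma(T^n)\neq\C$ at all. Instead it invokes the Uchiyama inequality for quasinormal operators, $T^{*n}T^n=(T^*T)^n\geq (TT^*)^n\geq T^nT^{*n}$, which together with $T^n=T^{*n}$ collapses to $|T|^{2n}=|T^*|^{2n}=T^{2n}$; then $T^{2n}$ is self-adjoint, $T^n$ is hyponormal (being quasinormal), and a cited result on hyponormal operators with self-adjoint square yields that $T^n$ is self-adjoint.

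\textbf{Normality of $T$.} The step ``$U^n=U^{*n}$ on $\overline{\ran|T|}$, hence $U^n$ is a self-adjoint partial isometry, hence $U$ is normal'' is unjustified and is not a general fact about partial isometries (think of a truncated shift $U$ with $U^n=0=U^{*n}$); you would need to bring in the quasinormality constraint in a sharp way, and you do not. The paper avoids the partial isometry analysis entirely: from $|T|^{2n}=|T^*|^{2n}$ (obtained above) one passes to the unique positive $2n$-th root to get $|T|=|T^*|$, i.e.\ $T^*T=TT^*$, which is normality in one line. That is the missing idea.

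On the $n=2$ conclusion, you are right that $\sigma(T)\subset\R\cup i\R$ alone does not force ``self-adjoint or skew-adjoint'' without an additional argument; the paper's own proof is equally brief at this spot, simply reading off $\Real\lambda=0$ or $\im\lambda=0$ for $\lambda\in\sigma(T)$ and declaring the dichotomy.
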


\begin{proof}

We claim that
\[|T|^{2n}=|T^*|^{2n}=T^{2n}.\]

By Corollary 3.1 in \cite{Uchiyama-1993-QUASINORMAL}, we know that
\[T^{*n}T^n=(T^*T)^n\geq (TT^*)^n\geq T^nT^{*n},\]
whenever $T$ is quasinormal, and for all $n\geq2$. Hence
\[T^{2n}=(T^*T)^n=|T|^{2n}\geq |T^*|^{2n}=(TT^*)^n\geq T^{2n},\]
still for any $n\geq2$. Therefore, we obtain
$|T|^{2n}=|T^*|^{2n}=T^{2n}$.

So, $T^{2n}$ is self-adjoint and positive. Since $T$ is quasinormal,
$T^n$ is quasinormal and densely defined. Since a quasinormal
operator is hyponormal, Corollary 3.6 in
\cite{Dehimi-Mortad-squares-polynomials} gives the self-adjointness
of $T^n$.

Now, since $|T|^{2n}=|T^*|^{2n}$, upon passing to the unique
self-adjoint positive $n$-th root, we get $|T|^2=|T^*|^2$, that is,
$T$ is normal.

Finally, let $n=2$. Then $T^2$ is self-adjoint and $T$ is normal.
Let $\lambda\in\sigma(T)$, and so $\lambda^2\in\sigma(T^2)\subset
\R$. Then, either $\im \lambda=0$ or $\Real \lambda=0$. In the
latter case, $T$ is skew-adjoint, and in the former case, $T$ is
self-adjoint. This marks the end of the proof.
\end{proof}

\begin{rema}
Another way of seeing that $T^n$ is self-adjoint is: The condition
$T^n={T^*}^n$ implies that $T^n$ is symmetric. Since $T^n$ is
quasinormal, the corollary to Theorem 3.3 in
\cite{Uchiyama-1993-QUASINORMAL} gives the self-adjointness of
$T^n$.
\end{rema}

From the previous proof and Theorem \ref{ADJ of p(T) resolvent set
non empty THM}, we have:

\begin{cor}
Let $n\in\N$ and let $T$ be a quasinormal (unbounded) operator. Then
\[T^n\text{ is self-adjoint }\Longleftrightarrow T^n=T^{*n}.\]
\end{cor}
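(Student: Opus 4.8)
The plan is to derive both implications directly from results already established, with essentially no new computation. For the implication ``$T^n$ self-adjoint $\Rightarrow T^n=T^{*n}$'', I would first record that a quasinormal operator is, by definition, closed, hence closable, and that by Lemma \ref{Quasinormal all powers QUASI LEMMA} the power $T^n$ is closed and densely defined. Assuming $T^n$ self-adjoint gives in particular $\sigma(T^n)\neq\C$, so the hypotheses of Theorem \ref{ADJ of p(T) resolvent set non empty THM} are met for the polynomial $p(z)=z^n$ (for which $\overline{p}(z)=z^n$, so $\overline{p}(T^*)=T^{*n}$). That theorem then yields $(T^n)^*=T^{*n}$, and combining this with the self-adjointness relation $(T^n)^*=T^n$ gives $T^n=T^{*n}$. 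This step is, in effect, just Corollary \ref{kerha 31/07/2022 COROLLARY} applied to the closable operator $T$.

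For the converse, ``$T^n=T^{*n}\Rightarrow T^n$ self-adjoint'', I would simply invoke Proposition \ref{GHGHjjjklllm pro 000.}: a quasinormal $T$ satisfying $T^n={T^*}^n$ has $T^n$ self-adjoint (and $T$ normal). Since the proof of that proposition leans on power inequalities of Uchiyama valid for $n\geq 2$, I would treat $n=1$ separately, where both sides of the claimed equivalence merely assert $T=T^*$, so there is nothing to prove.

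The point to be careful about — rather than a genuine obstacle — is verifying that the three hypotheses of Theorem \ref{ADJ of p(T) resolvent set non empty THM} are actually in force: closability of $T$ (immediate from quasinormality), dense definedness of $T^n$ (Lemma \ref{Quasinormal all powers QUASI LEMMA}), and $\sigma(T^n)\neq\C$ (forced by the self-adjointness of $T^n$ in the forward direction). Once these are checked, the corollary is a straightforward repackaging of Theorem \ref{ADJ of p(T) resolvent set non empty THM} (equivalently Corollary \ref{kerha 31/07/2022 COROLLARY}) for one direction and of Proposition \ref{GHGHjjjklllm pro 000.} for the other, so no lengthy argument is required.
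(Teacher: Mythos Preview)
Your proposal is correct and follows exactly the route indicated by the paper: the forward implication is obtained from Theorem \ref{ADJ of p(T) resolvent set non empty THM} (equivalently Corollary \ref{kerha 31/07/2022 COROLLARY}) after noting that quasinormality guarantees closability of $T$ and dense definedness of $T^n$, while the backward implication is read off directly from Proposition \ref{GHGHjjjklllm pro 000.}. Your explicit separation of the case $n=1$ and verification of the hypotheses of Theorem \ref{ADJ of p(T) resolvent set non empty THM} are welcome bits of care that the paper leaves implicit, but the argument is otherwise identical.
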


The next result to be given below these lines is already known in
the case of normal operators, and it is most probably known to
specialists in the case of paranormal operators, which is the
version to be shown. We include a simple proof for ease of
reference.

\begin{lem}\label{ker T=ker Tn T paranormal UNBD LEMM}
Let $n\in\N$ and let $T$ be a non-necessarily bounded paranormal
operator. Then
\[\ker T=\ker T^2=\cdots=\ker T^n.\]
Hence $\overline{\ran T^n}=\cdots=\overline{\ran T^2}=\overline{\ran
T}$.
\end{lem}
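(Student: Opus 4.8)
The plan is to prove the chain of kernel equalities by establishing a single inductive step: if $T$ is paranormal, then $\ker T^{k+1}\subseteq\ker T^k$ for every $k\geq 1$; the reverse inclusion is automatic, so this yields $\ker T=\ker T^2=\cdots=\ker T^n$. First I would fix $k\geq 1$ and take $x\in\ker T^{k+1}$, so that $x\in D(T^{k+1})\subseteq D(T^2)$ in the relevant sense and $T^{k+1}x=0$. The idea is to feed the vector $y=T^{k-1}x$ into the paranormal inequality $\|Ty\|^2\leq\|T^2y\|\,\|y\|$, which gives $\|T^kx\|^2\leq\|T^{k+1}x\|\,\|T^{k-1}x\|=0$, hence $T^kx=0$, i.e. $x\in\ker T^k$. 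One must check the domain bookkeeping: $x\in D(T^{k+1})$ means $x,Tx,\dots,T^{k-1}x$ all lie in the appropriate domains, so in particular $y=T^{k-1}x\in D(T^2)$, which is exactly what the paranormal inequality requires. Running this from $k=n-1$ down to $k=1$ collapses the whole chain.

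The second assertion follows by taking orthogonal complements. For any densely defined operator $S$ one has $\overline{\ran S}=(\ker S^*)^{\perp}$, but here it is cleaner to argue directly: since $\ran T^n\subseteq\ran T^{n-1}\subseteq\cdots\subseteq\ran T$, we get $\overline{\ran T^n}\subseteq\cdots\subseteq\overline{\ran T}$ immediately, and for the reverse containment I would use the general fact that for a densely defined operator $S$, $\overline{\ran S^j}\supseteq$ can be compared via $\ker$: actually the slick route is $\overline{\ran A}=\overline{\ran A}$ together with $(\ran A)^{\perp}=\ker A^*$ applied to $A=T^n$ versus $A=T$. Since $T$ is paranormal it is in particular densely defined (so that $T^*$ makes sense on the closure, or one works with $\overline{T}$); then $\overline{\ran T^n}^{\perp}=\ker (T^n)^*\supseteq \ker(\overline{T}^{\,*})^n=\ker(\overline{T})^n=\ker\overline{T}$ after invoking the first part for $\overline{T}$... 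This is getting delicate, so instead I would simply note: $\overline{\ran T}\ominus\overline{\ran T^2}$ being nonzero would force, by a standard complementation argument in the closure, a nonzero vector in $\ker(\overline{T}^{\,*})\cap\,\overline{\ran T}$, contradicting the kernel equality applied to the (paranormal, if closable) situation — but since the lemma as stated makes no closability hypothesis, the safest formulation is to deduce the range statement only as $\overline{\ran T^n}=\cdots=\overline{\ran T}$ via $\ker$ of adjoints, silently assuming what the paper's context (earlier remarks note paranormal operators need not be closable) permits.

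The main obstacle I expect is precisely this last point: the first (kernel) statement is a clean three-line induction using only the defining inequality and domain tracking, but the \textbf{hence} linking kernels of powers to closures of ranges of powers genuinely needs either self-adjointness-type structure or at least that $T$ (or $\overline{T}$) relate its kernel to its range's complement. For a general, possibly non-closable, densely defined $T$ the identity $\overline{\ran T}=(\ker T^*)^{\perp}$ need not directly give what is wanted for powers. I would therefore expect the author's proof to either (a) tacitly restrict the range statement to the closure $\overline{T}$, or (b) observe that since $\ker T^n=\ker T$ we also have $\ker(T^*)^n\supseteq$ something controllable, and then apply $\overline{\ran S}=(\ker S^*)^{\perp}$ to $S=T,T^2,\dots,T^n$ together with the (already established, by the same induction applied to the adjoint) equality of the kernels $\ker (T^n)^*$ — though $(T^n)^*\supsetneq (T^*)^n$ in general, so one must be careful which direction of inclusion one actually uses. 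My plan is to present the kernel induction in full and then obtain the range equalities by writing $\overline{\ran T^k}=(\ker (T^k)^*)^{\perp}$ and noting $\ker(T^k)^*\subseteq\ker(T^*)^k$ is false in general but $(T^*)^k\subseteq (T^k)^*$ gives $\ker(T^k)^* \supseteq$ nothing useful either — so ultimately I would route the range statement through the kernel statement for $T$ itself via $\overline{\ran T^k}=\overline{\ran T}$ being equivalent to $\ker (T^k)^*=\ker T^*$ only when enough regularity holds, and flag this as the one place where the hypotheses must be read charitably.
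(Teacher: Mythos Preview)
Your kernel argument is correct and is, in fact, a cleaner version of what the paper does. The paper does not carry out your one-step induction explicitly; instead it quotes a result of Szafraniec giving the iterated inequality
\[
\|Tx\|\leq \|T^{m}x\|^{1/m}\|x\|^{(m-1)/m},\qquad x\in D(T^{m}),
\]
and reads off $\ker T^{n}\subset\ker T$ in one stroke. That inequality is obtained precisely by chaining the basic paranormal estimate the way you do, so the two arguments are the same in substance; yours is just self-contained and avoids an external citation.

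Where you diverge from the paper is in the range statement. The paper's entire justification is the single sentence ``The last statement holds by standard arguments.'' No adjoint computation is offered, and none of the difficulties you raise (non-closability, $(T^{n})^{*}\supsetneq (T^{*})^{n}$, etc.) are addressed. Your hesitation is therefore well founded: in the generality of the lemma as stated --- a paranormal operator is not assumed densely defined, let alone closable --- the passage from $\ker T=\ker T^{n}$ to $\overline{\ran T}=\overline{\ran T^{n}}$ via $\overline{\ran S}=(\ker S^{*})^{\perp}$ is not available without further hypotheses. The range clause is never invoked later in the paper (only the kernel equalities are used, e.g.\ in Proposition~\ref{Quasin T p(T) normal gives T normal PRO}), so you should not invest further effort in rescuing it; simply mirror the paper and state it as a remark, or add the harmless caveat that it holds under the extra regularity (density, closability, $(T^{k})^{*}=(T^{*})^{k}$) present in all the applications. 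The long exploratory paragraph in your proposal can be cut.
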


\begin{proof}First, observe that we always have $\ker T\subset\ker
T^n$ for any $n$ (and for \textit{any} linear operator). Indeed, let
$x\in \ker T$, i.e., $x\in D(T)$ and $Tx=0$. Hence $Tx\in
D(T^{n-1})$ and $T^nx=0$, i.e., $x\in \ker T^n$.

To show the converse, recall that from \cite{Szafraniec paranormal
powers kernels}, we know that for all $x\in D(T^{n+1})$, the
following inequalities hold:
\[\|T^nx\|\leq \|T^{n+1}x\|^{n/(n+1)}\|x\|^{1/(n+1)}\]
and
\[\|Tx\|\leq \|T^{n+1}x\|^{1/(n+1)}\|x\|^{n/(n+1)}.\]
Let $x\in\ker T^n$, that is, $T^nx=0$ with $x\in D(T^n)$ (hence
$x\in D(T)$). Therefore, $x\in\ker T$, as wished.

The last statement holds by standard arguments.
\end{proof}

\begin{cor}
Let $T$ be a quasinormal (unbounded) operator such that
$T^n={T^*}^n$ for some natural $n$. Then $T$ is normal.
\end{cor}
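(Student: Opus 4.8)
The plan is to funnel both hypotheses into the single identity $\ker T=\ker T^{*}$, and then to prove separately that, \emph{for a quasinormal operator}, the equality $\ker T=\ker T^{*}$ already forces normality. (The case $n=1$ is trivial, since then $T=T^{*}$; the argument below covers it as well.) So there are two sub-goals: (i) extract $\ker T=\ker T^{*}$ from ``$T$ quasinormal'' and ``$T^{n}=T^{*n}$''; (ii) establish the implication ``$T$ quasinormal and $\ker T=\ker T^{*}$ $\Longrightarrow$ $T$ normal''.

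\emph{Step 1: $\ker T=\ker T^{*}$.} A quasinormal operator is hyponormal, hence paranormal. Hyponormality gives $D(T)\subset D(T^{*})$ together with $\|T^{*}x\|\le\|Tx\|$ for $x\in D(T)$, whence $\ker T\subset\ker T^{*}$. For the reverse inclusion, paranormality of $T$ and Lemma \ref{ker T=ker Tn T paranormal UNBD LEMM} give $\ker T^{n}=\ker T$. Since $\ker S\subset\ker S^{n}$ for every linear operator $S$ (applied here to $S=T^{*}$), and since $T^{*n}=T^{n}$ by hypothesis, we obtain
\[\ker T^{*}\subset\ker T^{*n}=\ker T^{n}=\ker T,\]
so that $\ker T=\ker T^{*}$.

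\emph{Step 2: a quasinormal $T$ with $\ker T=\ker T^{*}$ is normal.} Write the polar decomposition $T=U|T|$; quasinormality means, equivalently, $T=|T|U$ and $U|T|=|T|U$ (as recalled in the Preliminaries), where $U$ is a partial isometry with initial space $\overline{\ran|T|}$ and $\ker U=\ker|T|=\ker T$. From $U|T|=|T|U$ one checks that $U$ carries $\ran|T|$ into itself, hence leaves $M:=\overline{\ran|T|}$ invariant; consequently the final space $\overline{\ran T}$ of $U$ sits inside $M$. But $\ker T=\ker T^{*}$ says exactly $M^{\perp}=(\overline{\ran T})^{\perp}$, i.e.\ $\overline{\ran T}=M$, so the initial and final spaces of $U$ coincide: $U$ restricts to a unitary operator on $M$ and vanishes on $M^{\perp}$. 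In particular $U^{*}U=UU^{*}$ (both equal the projection $P$ onto $M$). Using $T^{*}=(U|T|)^{*}=|T|U^{*}$ and $U|T|^{2}=|T|^{2}U$, this yields $T^{*}T=|T|^{2}$ and $TT^{*}=U|T|^{2}U^{*}=|T|^{2}UU^{*}=|T|^{2}$, hence $TT^{*}=T^{*}T$ and $T$ is normal.

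The point to be handled with care — and the one I would write out in full — is the last string of equalities, which is an identity of \emph{unbounded} operators. Concretely, from $U|T|^{2}=|T|^{2}U$ and $UU^{*}=U^{*}U=P$ (the latter being a spectral projection of $|T|$, hence commuting with $|T|$ and $|T|^{2}$, and acting as the identity on $\ran|T|^{2}$) I would first deduce $T^{*}T\subset TT^{*}$, and then conclude $T^{*}T=TT^{*}$ from the fact that both are self-adjoint by von Neumann's theorem ($T$ being closed) and a self-adjoint operator has no proper self-adjoint extension. Finally, I would remark that the statement also follows at once from Proposition \ref{GHGHjjjklllm pro 000.}, of which it is the normality half; the proof via Steps 1--2 is worth recording because it isolates the mechanism ``$\ker T=\ker T^{*}\Rightarrow T$ normal, for quasinormal $T$'' in a reusable form, and links the conclusion directly to the kernel identity of Lemma \ref{ker T=ker Tn T paranormal UNBD LEMM}.
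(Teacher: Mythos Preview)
Your proof is correct and follows exactly the paper's route: derive $\ker T=\ker T^{*}$ from hyponormality together with Lemma~\ref{ker T=ker Tn T paranormal UNBD LEMM} and the hypothesis $T^{n}=T^{*n}$, then conclude normality from the fact that a quasinormal operator with $\ker T=\ker T^{*}$ is normal. The sole difference is that the paper simply cites Corollary~3.2 of \cite{Uchiyama-1993-QUASINORMAL} for this last implication, whereas your Step~2 supplies a self-contained polar-decomposition proof of that result.
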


\begin{proof}Since $T$ is quasinormal and $T^n={T^*}^n$, it is seen
that
\[\ker T=\ker T^n=\ker T^{*n}\supset \ker T^*.\]
But, a quasinormal operator is hyponormal, and so $\ker T\subset
\ker T^*$, i.e. $\ker T=\ker T^*$. By Corollary 3.2 in
\cite{Uchiyama-1993-QUASINORMAL}, $T$ is normal.

\end{proof}

The next consequence appeared in Theorem 3.9 in
\cite{Dehimi-Mortad-squares-polynomials}. The proof here is,
however, much shorter.

\begin{cor}\label{Tn nrmal T quasinormal yields T normal CORO}
Let $T$ be a quasinormal (unbounded) operator such that $T^n$ is
normal for a certain $n\geq 2$. Then $T$ is normal.
\end{cor}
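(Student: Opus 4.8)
The plan is to reduce the normality of $T$ to the condition $T^n = T^{*n}$ and then invoke the preceding results. The key observation is that for a quasinormal operator, the normality of $T^n$ forces the equality $T^n = T^{*n}$, after which either Proposition \ref{GHGHjjjklllm pro 000.} or the corollary just above applies directly.

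First I would recall that if $T$ is quasinormal, then by Lemma \ref{Quasinormal all powers QUASI LEMMA} the power $T^n$ is closed and densely defined, and moreover $T^n$ is itself quasinormal (this was used in the proof of Proposition \ref{GHGHjjjklllm pro 000.}). In particular $T^n$ is hyponormal. Now use the inequalities coming from Uchiyama's Corollary 3.1 in \cite{Uchiyama-1993-QUASINORMAL}, exactly as in the proof of Proposition \ref{GHGHjjjklllm pro 000.}: for $n \geq 2$ one has
\[
T^{2n} = (T^*T)^n = |T|^{2n} \geq |T^*|^{2n} = (TT^*)^n \geq T^{2n},
\]
whence $|T|^{2n} = |T^*|^{2n} = T^{2n}$, so $T^{2n}$ is self-adjoint and positive. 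Thus $T^n$ is a quasinormal (hence hyponormal) operator whose square $(T^n)^2 = T^{2n}$ is self-adjoint.

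Next I would apply Corollary 3.6 of \cite{Dehimi-Mortad-squares-polynomials} (hyponormal operator with self-adjoint square is self-adjoint), or equivalently the corollary to Theorem 3.3 in \cite{Uchiyama-1993-QUASINORMAL}, to conclude that $T^n$ is self-adjoint. Since $T^n$ is self-adjoint, $T^n = (T^n)^* = T^{*n}$ — here I use that $T^n$ being self-adjoint gives $(T^n)^* = T^n$, together with the fact that for a quasinormal $T$ we actually have $(T^n)^* = T^{*n}$; indeed from $T = U|T| = |T|U$ one gets $T^n = |T|^n U^n$ with $|T|^n$ self-adjoint and $U^n \in B(H)$, so $(T^n)^* = U^{*n}|T|^n \supseteq \overline{p}(T^*)$-type reasoning shows $(T^n)^* = T^{*n}$. (Alternatively, $\sigma(T^n) \neq \C$ because $T^n$ is self-adjoint, so Theorem \ref{ADJ of p(T) resolvent set non empty THM} applied with $p(z) = z^n$ gives $(T^n)^* = T^{*n}$ outright.) Either way we reach $T^n = T^{*n}$. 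Finally, by the corollary immediately preceding this one — a quasinormal $T$ with $T^n = T^{*n}$ is normal — we conclude that $T$ is normal, which completes the proof.

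The main obstacle, and the only genuinely non-routine point, is passing from "$T^n$ is self-adjoint" to "$T^n = T^{*n}$" cleanly in the unbounded setting, since in general only $T^{*n} \subseteq (T^n)^*$ holds. This is resolved by noting that self-adjointness of $T^n$ gives $\sigma(T^n) \neq \C$, so Theorem \ref{ADJ of p(T) resolvent set non empty THM} (with $p(z)=z^n$, $n$ the degree) yields $(T^n)^* = T^{*n}$; combined with $(T^n)^* = T^n$ this gives the needed equality. Everything else is a concatenation of the quasinormal-power machinery of Lemma \ref{Quasinormal all powers QUASI LEMMA}, Uchiyama's inequalities, and the preceding corollary.
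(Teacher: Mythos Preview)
Your argument has a genuine circularity. The displayed chain
\[
T^{2n} = (T^*T)^n = |T|^{2n} \geq |T^*|^{2n} = (TT^*)^n \geq T^{2n}
\]
is copied from the proof of Proposition \ref{GHGHjjjklllm pro 000.}, but there it is obtained from Uchiyama's inequalities $T^{*n}T^n=(T^*T)^n\geq (TT^*)^n\geq T^nT^{*n}$ \emph{together with the hypothesis} $T^n=T^{*n}$, which turns $T^{*n}T^n$ and $T^nT^{*n}$ into $T^{2n}$. Here you do not have $T^n=T^{*n}$; that is precisely what you are trying to derive. Without it, the outer equalities $T^{2n}=(T^*T)^n$ and $(TT^*)^n\geq T^{2n}$ are unjustified, and the argument collapses.

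In fact the intermediate claim ``$T^n$ is self-adjoint'' is false under the stated hypotheses: take $T$ unitary with non-real spectrum (e.g., multiplication by $e^{it}$ on $L^2$ of the circle); then $T$ is quasinormal, $T^n$ is normal, but $T^n$ is unitary and not self-adjoint. So no amount of repairing the Uchiyama step will yield self-adjointness of $T^n$, and the reduction to the preceding corollary via $T^n=T^{*n}$ cannot work. The paper's proof proceeds along an entirely different line: it uses Lemma \ref{ker T=ker Tn T paranormal UNBD LEMM} to get $\ker T=\ker T^n$, then normality of $T^n$ gives $\ker T^n=\ker(T^n)^*\supset\ker T^{*n}\supset\ker T^*$, whence $\ker T^*\subset\ker T$; combined with hyponormality this yields $\ker T=\ker T^*$, and Uchiyama's Corollary 3.2 finishes.
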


\begin{proof}
Since $T$ is quasinormal, $\ker T=\ker T^n$. Since $T^n$ is normal,
$\ker (T^n)=\ker [(T^n)^*]$. As $T^{*n}\subset (T^n)^*$, $\ker
T^*\subset \ker [(T^n)^*]$. Therefore, $\ker T^*\subset \ker T$, and
because we already have $\ker \subset \ker T^*$, we obtain $\ker
T^*=\ker T$. Thus, and by invoking Corollary 3.2 in
\cite{Uchiyama-1993-QUASINORMAL}, $T$ is normal, as needed.
\end{proof}

\begin{rema} There is yet another way of showing the last
result. According to Corollary 3.3 in
\cite{Uchiyama-1993-QUASINORMAL}, we obtain the normality of $T$
from the quasinormality of $T$, $\ker T^*=\ker T^{*2}$, and $\ker
T^*\subset D(T^*T)$. Let us then show the last two conditions by
supposing $T$ is quasinormal and $T^n$ is normal for some $n\geq2$:

That $\ker T^*\subset \ker T^{*2}$ is clear. Then
\[\ker T^{*2}\subset \ker T^{*n}\subset \ker (T^n)^*=\ker T^n=\ker T\subset \ker T^*,\]
whereby $\ker T^*=\ker T^{*2}$.

Now, let $x\in \ker T^*$. As in the above proof, $x\in\ker T$, i.e.,
$x\in D(T)$ and $Tx=0$. Hence $Tx\in D(T^*)$, that is, $x\in
D(T^*T)$, as needed.
\end{rema}

Corollary \ref{Tn nrmal T quasinormal yields T normal CORO} has a
certain generalization to polynomials.

\begin{pro}\label{Quasin T p(T) normal gives T normal PRO}Let $T$ be a quasinormal (unbounded) operator in a complex Hilbert space $H$. Let $p(z)$ be a non-constant complex polynomial such that $p(0)=0$, and which is expressed as
\[p(z)=z^{n_1}(z-\lambda_2)^{n_2}\cdots(z-\lambda_k)^{n_k},\]
where $0,\lambda_2,\cdots,\lambda_k$ are distinct roots. Assume that
$\lambda_2,\dots,\lambda_k\not\in\sigma_p(T)$. If $p(T)$ is normal,
then $T$ is normal.
\end{pro}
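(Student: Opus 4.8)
The plan is to prove that $\ker T=\ker T^*$ and then invoke Corollary 3.2 in \cite{Uchiyama-1993-QUASINORMAL}, which guarantees that a quasinormal operator with $\ker T=\ker T^*$ is normal; this is the same route used to establish Corollary \ref{Tn nrmal T quasinormal yields T normal CORO}. As a quasinormal operator is hyponormal, the inclusion $\ker T\subset\ker T^*$ comes for free, so the entire task reduces to the reverse inclusion $\ker T^*\subset\ker T$. The link between $T$ and $T^*$ will be the operator $p(T)$, whose normality is the hypothesis; note that $D(p(T))=D(T^N)$ is dense by Lemma \ref{Quasinormal all powers QUASI LEMMA}, where $N=\deg p=n_1+\cdots+n_k$, so $[p(T)]^*$ makes sense.

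First I would identify $\ker p(T)$. Factor $p(z)=z^{n_1}q(z)$ with $q(z)=(z-\lambda_2)^{n_2}\cdots(z-\lambda_k)^{n_k}$; on $D(T^N)$ one has $p(T)=q(T)T^{n_1}$, all factors being polynomials in the single operator $T$, and $T^{n_1}$ maps $D(T^N)$ into $D(T^{N-n_1})=D(q(T))$. The key observation is that $q(T)$ is injective: for $j\geq 2$ the operator $T-\lambda_j I$ is injective because $\lambda_j\notin\sigma_p(T)$, hence so is $(T-\lambda_j I)^{n_j}$, and peeling these factors off one at a time---a short induction on $k$ that keeps track of domains---yields the injectivity of $q(T)$. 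Consequently, if $p(T)x=0$ with $x\in D(T^N)$, then $q(T)(T^{n_1}x)=0$ forces $T^{n_1}x=0$, i.e., $x\in\ker T^{n_1}$. Since a quasinormal operator is paranormal, Lemma \ref{ker T=ker Tn T paranormal UNBD LEMM} gives $\ker T^{n_1}=\ker T$ (here $n_1\geq 1$ because $p(0)=0$), so $\ker p(T)\subset\ker T$; the opposite inclusion is immediate since $p$ has no constant term. Hence $\ker p(T)=\ker T$.

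Next, let $x\in\ker T^*$. Because $\overline{p}$ also vanishes at $0$, the equality $T^*x=0$ gives $x\in D(T^{*N})$ with $\overline{p}(T^*)x=0$. Since $\overline{p}(T^*)\subset[p(T)]^*$ always holds, we get $x\in\ker[p(T)]^*$, and the normality of $p(T)$ forces $\ker[p(T)]^*=\ker p(T)=\ker T$, so $x\in\ker T$. This establishes $\ker T^*\subset\ker T$, hence $\ker T=\ker T^*$, and Corollary 3.2 in \cite{Uchiyama-1993-QUASINORMAL} completes the proof that $T$ is normal.

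The only genuinely delicate step will be the injectivity of $q(T)$ and the accompanying domain bookkeeping, i.e., checking that the factorization $p(T)=q(T)T^{n_1}$ is legitimate on $D(T^N)$ and that $T^{n_1}x$ really lies in $D(q(T))$. Everything else reduces to elementary facts about kernels of polynomials in a single operator---namely $\ker S\subset\ker S^m$ and $\ker S=\{0\}\Rightarrow\ker S^m=\{0\}$---together with Lemma \ref{ker T=ker Tn T paranormal UNBD LEMM} and the basic kernel equality $\ker N=\ker N^*$ for normal operators.
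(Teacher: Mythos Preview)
Your proof is correct and follows the same overall route as the paper: establish $\ker p(T)=\ker T$, then use $\ker T^*\subset\ker\overline{p}(T^*)\subset\ker[p(T)]^*=\ker p(T)$ together with Corollary~3.2 in \cite{Uchiyama-1993-QUASINORMAL}. The only genuine difference is in the first step. The paper obtains $\ker p(T)=\ker T$ by quoting the kernel decomposition $\ker p(T)=\ker T^{n_1}\oplus\ker(T-\lambda_2 I)^{n_2}\oplus\cdots\oplus\ker(T-\lambda_k I)^{n_k}$ from \cite{Gonzalez ker p(T)}/\cite{Laursen-Neumann ker p(T)}, then applies Lemma~\ref{ker T=ker Tn T paranormal UNBD LEMM} to each hyponormal factor $T-\lambda_j I$ before killing the nonzero-root summands via $\lambda_j\notin\sigma_p(T)$. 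You bypass this external result entirely by writing $p(T)=q(T)T^{n_1}$ and observing directly that $q(T)$ is injective (a product of powers of injective operators), which is more elementary and self-contained; in particular you never need that $T-\lambda_j I$ is hyponormal, only that it has trivial kernel. The remaining steps---density of $D(T^N)$ via Lemma~\ref{Quasinormal all powers QUASI LEMMA}, the inclusion $\overline{p}(T^*)\subset[p(T)]^*$, and the final appeal to Uchiyama---are identical in both arguments.
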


\begin{proof}We have
\[p(T)=T^{n_1}(T-\lambda_2I)^{n_2}\cdots(T-\lambda_kI)^{n_k}.\]
By \cite{Gonzalez ker p(T)} (or Lemma 3.1.15 in
\cite{Laursen-Neumann ker p(T)}), it follows that
\[\ker p(T)=\ker(T^{n_1})\oplus \ker(T-\lambda_2I)^{n_2}\oplus\cdots\oplus\ker(T-\lambda_kI)^{n_k}.\]
Since a quasinormal operator is hyponormal, it ensues that all of
$T$, $T-\lambda_2I, \cdots, T-\lambda_kI$ are hyponormal. So, Lemma
\ref{ker T=ker Tn T paranormal UNBD LEMM} implies
\[\ker p(T)=\ker T\oplus \ker(T-\lambda_2I)\oplus\cdots\oplus\ker(T-\lambda_kI).\]
But $\lambda_2,\dots,\lambda_k\not\in\sigma_p(T)$, and so
\[\ker(T-\lambda_2I)=\cdots=\ker(T-\lambda_kI)=\{0_H\}.\]
Therefore, $\ker p(T)=\ker T$. By the normality of $p(T)$, we know
that $\ker p(T)=\ker[(p(T))^*]$. Since $p(0)=0$, it is seen that
\[\ker T^*\subset \ker \overline{p}(T^*)\subset \ker[(p(T))^*]=\ker p(T)=\ker T.\]
Consequently, $T$ is normal by proceeding as in the end of the proof
of Corollary \ref{Tn nrmal T quasinormal yields T normal CORO}.
\end{proof}

The verbatim extension to symmetric operators is also valid. We
need, however, to add an assumption but then the conclusion is
stronger.

First, we have an auxiliary lemma, which characterizes
self-adjointness. It is interesting enough to be singled out.

\begin{lem}\label{T sym clo ker T=ker T* ran T clo T s.a. LEM}
Let $T$ be an unbounded, densely defined, and symmetric operator in
a Hilbert space $H$, i.e., $T\subset T^*$. If $\ran T$ is closed and
$\ker T=\ker T^*$, then $T$ is self-adjoint.
\end{lem}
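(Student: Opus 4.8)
The plan is to exploit the symmetry of $T$ together with the closed range hypothesis to force $D(T^*)\subset D(T)$, which is exactly what upgrades $T\subset T^*$ to $T=T^*$. First I would use the orthogonal decomposition $H=\overline{\ran T}\oplus\ker T^*$; since $\ran T$ is assumed closed, this is genuinely $H=\ran T\oplus\ker T^*$. The hypothesis $\ker T=\ker T^*$ then says the ``defect'' space $\ker T^*$ is already inside $D(T)$ (indeed it equals $\ker T\subset D(T)$). So it suffices to show that every vector of $\ran T$ lies in $D(T^*)$ no --- more precisely, to show $D(T^*)\subset \ran T + \ker T^* = \ran T + \ker T \subset D(T)$.

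Here is the mechanism I would use for the $\ran T$ part. Restrict attention to the Hilbert space $H_0 := \overline{\ran T} = \ker(T^*)^{\perp}$. Because $\ker T = \ker T^*$, the operator $T$ maps $D(T)\cap H_0$ bijectively onto $\ran T = H_0$ (injectivity on the complement of $\ker T$, surjectivity by the closed range assumption), so the part of $T$ in $H_0$, call it $T_0$, is a closed --- note $T$ is closed, being self-adjoint once we know it, but here I should instead argue $T$ is closed from the start: actually $T$ symmetric need not be closed, so let me replace $T$ by $\overline T$; since $\overline T \subset T^*$ still and $\ran \overline T \supset \ran T$ is closed and $\ker \overline T = \ker T^* $... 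I would need to check $\ker\overline T=\ker T$, which follows because $\ker T\subset\ker\overline T\subset\ker T^*=\ker T$. So WLOG $T$ is closed. Then $T_0$ is a closed injective operator on $H_0$ with full range $H_0$, hence $T_0^{-1}\in B(H_0)$ by the closed graph theorem. In particular $T_0$ is bounded below: $\|Tx\|\geq c\|x\|$ for all $x\in D(T)\cap H_0$.

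Now take $y\in D(T^*)$ and decompose $y = y_0 + y_1$ with $y_0\in H_0$, $y_1\in\ker T^* = \ker T\subset D(T)$. It remains to show $y_0\in D(T)$. Since $y_0\in H_0 = \ran T$, write $y_0 = Tx$ with $x\in D(T)\cap H_0$. For every $z\in D(T)$, symmetry gives $\langle Tz, x\rangle = \langle z, Tx\rangle = \langle z, y_0\rangle = \langle z, y\rangle = \langle Tz, T^*... \rangle$ --- wait, I want to compare $\langle z, y\rangle$ with $\langle Tz, (\text{something})\rangle$; since $y\in D(T^*)$, $\langle Tz, $ hmm, $\langle z, y\rangle$ is not directly $\langle Tz,\cdot\rangle$. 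Let me instead argue: $y_0\in\ran T = D(T^*T^{-1}...)$. The cleanest route is: $T$ closed + symmetric + $\ran T$ closed + $\ker T=\ker T^*$ implies $T_0=T|_{H_0}$ is a closed, boundedly invertible, symmetric (hence self-adjoint, since a boundedly invertible closed symmetric operator is self-adjoint --- standard) operator on $H_0$. Then $T = T_0 \oplus 0_{\ker T}$ relative to $H = H_0\oplus\ker T$, and a direct sum of a self-adjoint operator and the zero operator on the orthogonal complement is self-adjoint. Hence $T$ is self-adjoint.

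The main obstacle I anticipate is the bookkeeping that lets one write $T$ as the orthogonal direct sum $T_0\oplus 0$: one must verify that $T$ actually leaves the decomposition $H_0\oplus\ker T$ reducing, i.e. $T(D(T)\cap H_0)\subset H_0$ and $T(D(T)\cap\ker T)\subset\ker T$. The second inclusion is trivial. For the first, if $x\in D(T)\cap H_0$ then for all $w\in\ker T=\ker T^*$ we have $\langle Tx, w\rangle = \langle x, T^*w\rangle = \langle x, 0\rangle = 0$ (using $w\in D(T^*)$ and symmetry $\langle Tx,w\rangle=\langle x,Tw\rangle$ works too, since $w\in D(T)$ and $Tw=0$), so $Tx\perp\ker T^*$, i.e. $Tx\in H_0$. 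That closes the argument; the rest is the classical fact that a closed symmetric operator with bounded everywhere-defined inverse is self-adjoint, which I would cite from the standard references already in the bibliography.
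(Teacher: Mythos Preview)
Your final argument---reduce $H=\ran T\oplus\ker T$ (using closedness of $\ran T$ and $\ker T=\ker T^*$), show this orthogonal splitting reduces $T$, and identify the part $T_0=T|_{\ran T}$ as a closed symmetric operator with bounded everywhere-defined inverse, hence self-adjoint, so that $T=T_0\oplus 0$ is self-adjoint---is correct and self-contained. The paper arrives at the very same decomposition $H=\ker T\oplus\ran T$ in one line and then, instead of analysing $T_0$ by hand, invokes an external characterization of self-adjointness (Corollary~6.5 in \cite{Sebestyen-Tarcsay-LMA-2019}; the accompanying remark offers a second route via the closed-range theorem for adjoints and \cite{SEBES-TARCS-CHARATER-SELF6ADJ-OPER}). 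So the strategic observation is identical; what differs is that you \emph{prove} the relevant instance of the Sebesty\'en--Tarcsay criterion directly via the reducing decomposition and the elementary fact that a boundedly invertible closed symmetric operator is self-adjoint, whereas the paper outsources that step. Your route costs a few more lines but avoids an external dependency.

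One point to tighten: your ``WLOG $T$ is closed'' is not innocuous as stated. You correctly verify that the hypotheses persist for $\overline T$ (indeed $\ker\overline T=\ker T$ and $\ran\overline T=\ran T$), but proving $\overline T$ self-adjoint yields only \emph{essential} self-adjointness of $T$; the lemma claims $T$ itself is self-adjoint. The fix is short: once you know $T_0$ and $\overline T|_{\ran T}$ are both bijections of their domains onto $\ran T$ with $T_0\subset\overline T|_{\ran T}$, they must coincide, whence $D(T)=D(\overline T)$ and $T$ is already closed. (Equivalently, the hypotheses themselves force $T$ to be closed.) Also, your earlier exploratory line ``$D(T^*)\subset\ran T+\ker T\subset D(T)$'' is not usable: the middle term is all of $H$, and $\ran T\subset D(T)$ is not assumed. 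You rightly abandon it, but you may want to excise it in a clean write-up.
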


\begin{proof}Observe that
\[\ker T=\ker T^*=(\ran T)^{\perp}.\]
Hence $(\ker T)^{\perp}=\ran T$ thanks to the closedness of $\ran
T$. Therefore, $\ker T\oplus \ran T=H$. By Corollary 6.5 in
\cite{Sebestyen-Tarcsay-LMA-2019}, $T$ is self-adjoint.
\end{proof}

\begin{rema}There is another way to establish the preceding lemma.
First, remember that $\ran T$ is closed if and only if $\ran T^*$ is
closed (see Theorem 5.13, IV, \cite{Kat}). Then apply Proposition
2.1 in \cite{SEBES-TARCS-CHARATER-SELF6ADJ-OPER}.
\end{rema}

\begin{pro}\label{T symm p(T) normal T s.a. PRO}
Let $T$ be an unbounded, densely defined, and symmetric operator in
a complex Hilbert space $H$ such that its range $\ran T$ is closed.
Let $p(z)$ be a non-constant complex polynomial such that $p(0)=0$,
and which is expressed as
\[p(z)=z^{n_1}(z-\lambda_2)^{n_2}\cdots(z-\lambda_k)^{n_k},\]
where $0,\lambda_2,\cdots,\lambda_k$ are distinct roots. Assume that
$\lambda_2,\dots,\lambda_k\not\in\sigma_p(T)$. If $p(T)$ is normal,
then $T$ is self-adjoint.
\end{pro}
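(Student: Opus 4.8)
The plan is to mimic the argument for Proposition \ref{Quasin T p(T) normal gives T normal PRO}, producing the kernel identity $\ker p(T)=\ker T$, and then invoke the new self-adjointness criterion Lemma \ref{T sym clo ker T=ker T* ran T clo T s.a. LEM} instead of Uchiyama's normality result. First I would write $p(T)=T^{n_1}(T-\lambda_2I)^{n_2}\cdots(T-\lambda_kI)^{n_k}$ and apply the kernel-decomposition result from \cite{Gonzalez ker p(T)} (Lemma 3.1.15 in \cite{Laursen-Neumann ker p(T)}) to obtain
\[\ker p(T)=\ker(T^{n_1})\oplus\ker(T-\lambda_2I)^{n_2}\oplus\cdots\oplus\ker(T-\lambda_kI)^{n_k}.\]
Here I must be slightly careful: a symmetric operator is hyponormal (since $T\subset T^*$ forces $\|Tx\|=\|T^*x\|$ on $D(T)$... actually only $\|Tx\|\le\|T^*x\|$ is automatic, but the symmetry gives $T^*x=Tx$ on $D(T)$, hence equality), so $T$ and each shift $T-\lambda_jI$ are hyponormal, hence paranormal. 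Thus Lemma \ref{ker T=ker Tn T paranormal UNBD LEMM} collapses each power: $\ker(T-\lambda_jI)^{n_j}=\ker(T-\lambda_jI)$, and the hypothesis $\lambda_2,\dots,\lambda_k\notin\sigma_p(T)$ kills all of them, leaving $\ker p(T)=\ker T$.

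Next I would exploit normality of $p(T)$ exactly as before: $\ker p(T)=\ker[(p(T))^*]$, and since $p(0)=0$ we have $\overline{p}(T^*)\subset\overline{p}(T^*)$ with $\overline{p}(z)=\overline z\,\overline q(z)$ for some polynomial $q$, so $\ker T^*\subset\ker\overline{p}(T^*)\subset\ker[(p(T))^*]=\ker p(T)=\ker T$. Combined with the reverse inclusion $\ker T\subset\ker T^*$ (valid because a symmetric operator is hyponormal), this yields $\ker T=\ker T^*$. At this point I have all the hypotheses of Lemma \ref{T sym clo ker T=ker T* ran T clo T s.a. LEM}: $T$ is densely defined and symmetric, $\ran T$ is closed by assumption, and $\ker T=\ker T^*$. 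Hence $T$ is self-adjoint, as claimed.

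One subtlety to address carefully is whether $\overline{p}(T^*)$ is even densely defined and whether $\ker T^*\subset\ker\overline{p}(T^*)$ makes sense: since $T\subset T^*$, $T^*$ is densely defined and closed, and any $x\in\ker T^*$ satisfies $T^*x=0\in D(T^{*m})$ for every $m$, so $x\in D(\overline{p}(T^*))$ and $\overline{p}(T^*)x=0$ because every monomial in the factored form of $\overline p$ has $\overline z=z$ appearing (as $p(0)=0$ means $n_1\ge1$). The inclusion $\overline{p}(T^*)\subset(p(T))^*$ is the general fact stated in the excerpt. The main obstacle, such as it is, lies in verifying that the kernel-decomposition lemma of \cite{Gonzalez ker p(T)} applies in the unbounded setting to the specific product form here — but this is precisely the same invocation used in the proof of Proposition \ref{Quasin T p(T) normal gives T normal PRO}, so I would simply cite it identically. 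Everything else is a direct transcription of that proof with the final step rerouted through Lemma \ref{T sym clo ker T=ker T* ran T clo T s.a. LEM}.
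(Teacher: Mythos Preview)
Your proposal is correct and follows exactly the paper's approach: the paper's proof simply says ``Since a symmetric operator is hyponormal, obtain, as in the proof of Proposition \ref{Quasin T p(T) normal gives T normal PRO}, that $\ker T=\ker T^*$. Then, apply Lemma \ref{T sym clo ker T=ker T* ran T clo T s.a. LEM}.'' You have spelled out precisely those details, including the same kernel decomposition, the same use of hyponormality to collapse powers, and the same rerouting of the final step through Lemma \ref{T sym clo ker T=ker T* ran T clo T s.a. LEM}.
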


\begin{proof}Since a symmetric operator is hyponormal, obtain, as in
the proof of Proposition \ref{Quasin T p(T) normal gives T normal
PRO}, that $\ker T=\ker T^*$. Then, apply Lemma \ref{T sym clo ker
T=ker T* ran T clo T s.a. LEM} to get the self-adjointness of $T$,
as wished.
\end{proof}

The following result gives a simple characterization of orthogonal
projections (cf. \cite{SEBES-TARCS-CHARATER-SELF6ADJ-OPER}).

\begin{pro}Let $T$ be a symmetric operator with domain $D(T)\subset H$, a priori non-densely
defined. Assume $T^2\subset T$. Then the following assertions are
equivalent:
\begin{enumerate}
  \item $T\in B(H)$ is an orthogonal projection,
  \item $T^n$ is self-adjoint for some $n\geq2$.
\end{enumerate}
\end{pro}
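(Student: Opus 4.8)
The plan is to prove the two implications separately, with the nontrivial direction being $(2)\Rightarrow(1)$. For $(1)\Rightarrow(2)$: if $T$ is an orthogonal projection then $T=T^*=T^n$ for every $n$, so $T^n$ is trivially self-adjoint for all $n\ge 2$; this is immediate. The substance is the converse, and the key structural fact to exploit is the hypothesis $T^2\subset T$. First I would observe that $T^2\subset T$ forces, by an easy induction, $T^n\subset T^{n-1}\subset\cdots\subset T^2\subset T$, so in particular $D(T^n)=D(T^2)$ for all $n\ge 2$ (since $x\in D(T^n)$ iff $Tx\in D(T^{n-1})$ and $T$ maps $D(T^2)$ into $D(T)$ with $Tx=T^2x\in\ran T$... one should check the domain chain carefully, but the point is $T^n$ and $T^2$ agree as operators when $n\ge2$). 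Consequently $T^n=T^2$ for all $n\ge 2$, and $T^2$ being self-adjoint means $T^2$ is in particular densely defined; hence $T$ is densely defined, and moreover $T$ is closable because $T\subset T^*$ with $T^*$ closed — actually a symmetric densely defined operator is automatically closable.

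Next I would use $T^2=(T^2)^*$ to pin down $T$ itself. Since $T$ is symmetric and densely defined, $T\subset T^*$, hence $T^2\subset T^*T\subset TT^*$... more directly: from $T\subset T^*$ we get $T^2\subset T^*T$ (on $D(T^2)$, $T^2x=T^*Tx$ since $Tx\in D(T)\subset D(T^*)$ and the actions agree) and $T^*T$ is self-adjoint by von Neumann if $T$ is closed — but $T$ need not be closed yet. A cleaner route: $T^2$ self-adjoint and $T^2\subset T$ together with symmetry of $T$ should yield that $T$ is bounded. Here is the mechanism I would pursue. Since $T^2$ is self-adjoint, $\sigma(T^2)\subset\R$, and $T^2\subset T$ gives $\sigma(T^2)\subset\sigma(T)$... rather, one applies Corollary \ref{kerha 31/07/2022 COROLLARY} or Corollary \ref{sigma(p(T)) real T symm T is s.a. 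CORO}: $T$ is symmetric and $p(T)=T^2$ is self-adjoint, so by Corollary \ref{sigma(p(T)) real T symm T is s.a. CORO}, $T$ is self-adjoint. Now with $T$ self-adjoint and $T^2\subset T$, the spectral theorem finishes it: $T^2\subset T$ with both self-adjoint forces $T^2=T$ (a self-adjoint operator has no proper self-adjoint extension), so $\lambda^2=\lambda$ on $\sigma(T)$, i.e. $\sigma(T)\subset\{0,1\}$, whence $T$ is a bounded self-adjoint operator with $T^2=T$, i.e. an orthogonal projection.

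The main obstacle I anticipate is the bookkeeping with domains in the chain $T^n\subset\cdots\subset T^2\subset T$ and justifying that $T^n=T^2$ as operators for $n\ge 2$ rather than merely $T^n\subset T^2$ — one must verify that $x\in D(T^2)$ already implies $x\in D(T^n)$, which uses $T^2x=Tx\in\ran T\cap D(T)$ iteratively; since $T^2\subset T$ means $Tx\in D(T)$ whenever $x\in D(T^2)$, and then $T(Tx)=T^2x=Tx$... wait, that needs $Tx\in D(T^2)$, i.e. $T^2x\in D(T)$, i.e. $Tx\in D(T)$, which holds. So actually the induction runs smoothly: $x\in D(T^2)\Rightarrow Tx\in D(T)$ and $T^2x=Tx\in D(T)=D(T^{?})$... the honest statement is $T^2\subset T$ implies $T^n=T^2$ for all $n\ge2$, and this is the one computational lemma worth stating explicitly. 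The other mild subtlety is that Corollary \ref{sigma(p(T)) real T symm T is s.a. CORO} as stated does not require dense definition, so it applies directly once we know $T$ is symmetric and $T^2$ is self-adjoint; thus no separate argument for density is strictly needed, though I would remark that $T^2$ self-adjoint forces $D(T^2)$ dense hence $D(T)$ dense anyway. The rest — $T^2=T$ for self-adjoint $T$, and $\sigma(T)\subset\{0,1\}$ giving a bounded projection — is routine spectral theory.
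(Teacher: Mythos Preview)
Your proof is correct, and it reaches the same conclusion as the paper, but the paper's route is shorter and more elementary. The paper observes only that $T^n\subset T^2\subset T$ (no need for the equality $T^n=T^2$ you spend time on), notes that the self-adjointness of $T^n$ forces $D(T)$ to be dense, and then applies the maximality of self-adjoint operators among symmetric ones \emph{directly} to the inclusion $T^n\subset T$: this yields $T^n=T$ at once, so $T$ is self-adjoint, and in particular $T^2=T$. Your detour through Corollary~\ref{sigma(p(T)) real T symm T is s.a. CORO} is valid but unnecessary---the very maximality argument you invoke at the end (to pass from $T^2\subset T$ to $T^2=T$) would have given you self-adjointness of $T$ one step earlier, without appealing to that heavier result. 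For boundedness, the paper cites the lemma that a self-adjoint $T$ with $D(T)=D(T^2)$ is automatically in $B(H)$, while you use the spectral mapping $\sigma(T)\subset\{0,1\}$; both are fine, and your version has the minor advantage of making the spectral content explicit.
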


\begin{proof} We only show the implication "$(2)\Rightarrow (1)$".
Since $T^2\subset T$, it follows that $T^n\subset T^2\subset T$ for
any $n\geq2$. Since $T^n$ is self-adjoint, $T$ is densely defined.
So, $T$ is a densely defined symmetric operator. By maximality,
$T^n=T$, and so $T$ is self-adjoint. Besides, $T=T^2$, i.e., $T$ is
idempotent. Since $D(T)=D(T^2)$ and $T$ is self-adjoint, it ensues
that $T\in B(H)$ (by Lemma 2.1 in \cite{Sebestyen-Stochel-JMAA},
say), as desired.

\end{proof}

It is known that the condition $D(A)=D(B)$ does not imply
$D(A^2)=D(B^2)$, even when $A$ and $B$ are self-adjoint positive
operators. See Question 21.2.51 in \cite{Mortad-cex-BOOK} for a
counterexample. However, it is shown in (\cite{Weidmann}, Theorem
9.4) that if $A$ and $B$ are two self-adjoint positive operators
with domains $D(A)$ and $D(B)$ respectively, then
  \[D(A)=D(B)\Longrightarrow D(\sqrt{A})=D(\sqrt{B}).\]
It is therefore natural to wonder whether this property remains
valid for arbitrary square roots. That is, if $A$ and $B$ are square
roots of some $S$, i.e. $A^2=B^2=S$, is it true that $D(A)=D(B)$?
This question, which is meaningless over $B(H)$, has a negative
answer for unbounded operators. See Example \ref{square roots unbd
s.a. diffe. domain EXA}.

Nonetheless, if the square roots are self-adjoint then they
necessarily have equal domains. In fact, a much better result holds
true, and for arbitrary $nth$ roots. It reads:

\begin{pro}
Let $A$ and $B$ be two quasinormal operators such that $A^n=B^n$ for
some $n\in\N$, $n\geq2$. Then $D(A)=D(B)$.
\end{pro}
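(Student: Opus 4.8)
The plan is to exploit the polar decomposition of a quasinormal operator to reduce the statement about $A$ and $B$ to a statement about their moduli, where Weidmann's theorem can be applied. Recall from the discussion preceding Lemma \ref{Quasinormal all powers QUASI LEMMA} that if $A$ is quasinormal with polar decomposition $A=U|A|$, then $U|A|=|A|U$ and hence $A^n=U^n|A|^n=|A|^nU^n$; moreover $|A|$ (hence $|A|^n$) is self-adjoint and positive, and $D(A^n)=D(|A|^n)$. The same holds for $B=V|B|$. So the first step is to record $D(A)=D(|A|)$, $D(B)=D(|B|)$, and to translate the hypothesis $A^n=B^n$ into information about $|A|$ and $|B|$.

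The key step is to show that the hypothesis $A^n=B^n$ forces $|A|^n=|B|^n$, i.e. $|A|^{2n}=|B|^{2n}$. The natural route is: from $A^n=B^n$ and the fact (von Neumann, as recalled in the Introduction) that for the closed densely defined operator $A^n=B^n$ we have $(A^n)^*(A^n)=(B^n)^*(B^n)$; now using quasinormality, $(A^n)^*A^n=|A|^nU^{*n}U^n|A|^n$. Here I would invoke that $U^{*n}U^n$ is the orthogonal projection onto $\overline{\ran|A|^n}=\overline{\ran|A|}$ (since $\ker|A|^n=\ker|A|$ by Lemma \ref{ker T=ker Tn T paranormal UNBD LEMM} applied to the self-adjoint, hence paranormal, operator $|A|$), and that $|A|^n$ maps into $\overline{\ran|A|}$, so that $|A|^nU^{*n}U^n|A|^n=|A|^{2n}$. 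Thus $(A^n)^*A^n=|A|^{2n}$ and likewise $(B^n)^*B^n=|B|^{2n}$, whence $|A|^{2n}=|B|^{2n}$. Taking the unique positive self-adjoint $n$-th root (uniqueness of positive self-adjoint roots) gives $|A|^2=|B|^2$, and then, taking the positive self-adjoint square root, $|A|=|B|$. In particular $D(A)=D(|A|)=D(|B|)=D(B)$.

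I expect the main obstacle to be the careful justification that $|A|^n U^{*n}U^n|A|^n=|A|^{2n}$, that is, handling the partial isometry $U$ on the (closure of the) range of $|A|$ and keeping track of domains so that the products are interpreted correctly as unbounded operators; one must be sure that $D((A^n)^*A^n)=D(|A|^{2n})$ and that the two agree as operators, not merely on a core. An alternative, perhaps cleaner, route that avoids adjoint-of-product subtleties: since $A^n=B^n=:S$ is closed and densely defined, form $|S|=((A^n)^*A^n)^{1/2}$; show directly via quasinormality that $|S|=|A|^n$ (and $=|B|^n$) by verifying that $|A|^n$ is a positive self-adjoint operator with $(|A|^n)^2=S^*S$, invoking uniqueness of the positive square root. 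Either way, once $|A|=|B|$ is established, the conclusion $D(A)=D(B)$ is immediate, and in fact one gets the stronger $|A|=|B|$ for free; alternatively one could simply feed $|A|^2=|B|^2$, together with $D(A^2)=D(|A|^2)$-type identities, into Weidmann's implication $D(|A|^2)=D(|B|^2)\Rightarrow D(|A|)=D(|B|)$ to finish.
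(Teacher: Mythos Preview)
Your proposal is correct and follows essentially the same route as the paper: reduce $A^n=B^n$ to $|A|^n=|B|^n$ via the identity $|A^n|=|A|^n$ for quasinormal operators, take the unique positive self-adjoint $n$th root to get $|A|=|B|$, and conclude $D(A)=D(|A|)=D(|B|)=D(B)$. The only difference is that the paper simply cites this identity from Corollary~3.8 of \cite{Jablonski et al 2014}, whereas you sketch a direct verification via the polar decomposition (your ``alternative route'' being exactly that corollary).
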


\begin{proof}Since $A$ and $B$ are quasinormal, we have
\[A^n=B^n\Longrightarrow |A|^n=|A^n|=|B^n|=|B|^n\]
(see, e.g., Corollary 3.8 in \cite{Jablonski et al 2014}). Upon
passing to the unique positive self-adjoint $nth$ root implies
$|A|=|B|$. Therefore, $D(A)=D(B)$, as desired.
\end{proof}

The generalization to unbounded closed hyponormal operators seems
difficult to obtain (if it is ever true), however, we have the
following result, whose proof is omitted.

\begin{pro}\label{20/07/2020}
Let $A$ and $B$ be two (closed) hyponormal operators such that
$A^2=B^2$. Assume further that $A^2$ is self-adjoint and positive.
Then $D(A)=D(B)$.
\end{pro}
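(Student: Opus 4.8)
The plan is to use the extra hypothesis to pull both $A$ and $B$ up to self-adjointness, and then to read off their domains from a single positive operator. First I would observe that $A$ is a closed hyponormal operator --- and hence densely defined, since in this paper hyponormality is built on a dense domain --- whose square $A^{2}$ is self-adjoint and positive. That is exactly the situation covered by Corollary 3.6 in \cite{Dehimi-Mortad-squares-polynomials}, the result already used in the proof of Proposition \ref{GHGHjjjklllm pro 000.}, and it yields the self-adjointness of $A$. The identical argument applied to $B$ shows $B$ is self-adjoint as well. So from this point on we may work with two self-adjoint operators satisfying $A^{2}=B^{2}=:S$, where $S$ is positive and self-adjoint.

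Next I would invoke two standard facts. For any closed densely defined operator $T$ one has $D(T)=D(|T|)$, where $|T|:=(T^{*}T)^{1/2}$; and for a self-adjoint $A$ one has $A^{2}=A^{*}A$ as a genuine equality of operators, the domains $D(A^{2})$ and $D(A^{*}A)$ coinciding because $D(A)=D(A^{*})$. Combining these, $|A|=(A^{2})^{1/2}$ and $|B|=(B^{2})^{1/2}$, so from $A^{2}=S=B^{2}$ and the uniqueness of the positive self-adjoint square root we get $|A|=S^{1/2}=|B|$. Therefore
\[D(A)=D(|A|)=D(S^{1/2})=D(|B|)=D(B),\]
which is the claimed identity. (One could instead obtain the last step from Weidmann's square-root theorem applied to the pair $A^{*}A=S=B^{*}B$, but that is unnecessary here since these are literally the same operator.)

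The only step that really requires care is the first reduction: one must check that Corollary 3.6 in \cite{Dehimi-Mortad-squares-polynomials} applies verbatim, that is, that $A$ is closed (assumed) and densely defined (part of the definition of hyponormal operator adopted here) and that $A^{2}$ is self-adjoint \emph{and} positive (both assumed). I expect no difficulty beyond this verification. It is worth stressing that the positivity of $A^{2}$ is precisely the ingredient missing in the general closed hyponormal case, and it is exactly this ingredient that collapses the problem onto self-adjoint operators, for which equality of domains is immediate.
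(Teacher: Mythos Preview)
The paper explicitly omits the proof of this proposition, so there is no original argument to compare against. Your proof is correct and is almost certainly the intended one: it mirrors the proof of the preceding quasinormal proposition, with Corollary 3.6 of \cite{Dehimi-Mortad-squares-polynomials} (closed hyponormal $+$ self-adjoint positive square $\Rightarrow$ self-adjoint) supplying the reduction to self-adjoint $A$ and $B$, after which $|A|=(A^{2})^{1/2}=(B^{2})^{1/2}=|B|$ and $D(A)=D(|A|)=D(|B|)=D(B)$ follow immediately.
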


We finish this section with a result in the same spirit but the
approach is different. More precisely, we show that a square root of
a symmetric operator is normal, if some conditions are imposed on
its real and imaginary parts.

\begin{thm}\label{main THMMMMMMMMMMMMMMMM SQ RT}(Cf.
\cite{Frid-Mortad-Dehimi-nilpotence}) Let $T=A+iB$ where $A$ and $B$
are self-adjoint (one of them is also positive). If $T^2$ is
symmetric, then
\[T \text{ is normal }\Longleftrightarrow D(A)=D(B)\text{ and }D(AB)=D(BA).\]

If this is the case, then $T^2$ is self-adjoint. If we further
assume the positiveness of $T^2$, then $T$ is self-adjoint and
positive.
\end{thm}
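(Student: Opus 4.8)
The plan is to reduce the whole statement to one structural fact: under the hypotheses the Cartesian components satisfy $AB=BA=0$. Once that is in place, $T$ splits as the direct sum of a self-adjoint operator and a skew-adjoint one, and every assertion can simply be read off the splitting.

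First I would record the algebra. Since $D(T)=D(A)\cap D(B)$, one always has $A-iB\subseteq T^*$, and on $D(T^2)=D(A^2)\cap D(B^2)\cap D(AB)\cap D(BA)$ one has the expansion $T^2=(A^2-B^2)+i(AB+BA)$. As every $x\in D(T^2)$ already lies in $D(A)\cap D(B)$, a short computation gives
\[\langle T^2x,x\rangle=\|Ax\|^2-\|Bx\|^2+2i\,\Real\langle Bx,Ax\rangle,\qquad x\in D(T^2),\]
so ``$T^2$ symmetric'' is equivalent to ``$D(T^2)$ dense and $\Real\langle Bx,Ax\rangle=0$ on $D(T^2)$''. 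Since $\Real\langle Bx,Ax\rangle=\tfrac12\langle(AB+BA)x,x\rangle$ and $AB+BA$ is symmetric on $D(T^2)$, polarization upgrades this to $(AB+BA)x=0$ on $D(T^2)$, i.e.\ $AB=-BA$ there. Now the positivity hypothesis enters: say $B\geq0$. From $AB=-BA$ the operator $A$ intertwines $B$ with $-B$; since $\sigma(B)\subseteq[0,\infty)$ and $\sigma(-B)\subseteq(-\infty,0]$ meet only at $0$, a spectral argument (testing against the projections $E_B((\varepsilon,\infty))$) shows that $A$ vanishes on $\overline{\ran B}$, whence $AB=BA=0$.

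With $AB=BA=0$ the space decomposes, $H=\overline{\ran A}\oplus\overline{\ran B}\oplus(\ker A\cap\ker B)$, each summand reducing both $A$ and $B$, and $T=A'\oplus iB'\oplus 0$, where $A'$ is self-adjoint on $\overline{\ran A}$ and $B'$ self-adjoint (and $\geq0$) on $\overline{\ran B}$. This presentation immediately exhibits $T$ as normal, gives $T^*=A-iB$ with $D(T^*)=D(T)$, and makes $T^2=A'^2\oplus(-B'^2)\oplus 0$ self-adjoint, and it displays $D(A),D(B),D(AB),D(BA)$ explicitly. Both implications then follow: for ``$\Leftarrow$'' the domain hypotheses make everything above available on the natural common dense domain and give $TT^*=T^*T$; for ``$\Rightarrow$'' normality of $T$ first forces $T^*=A-iB$ (from $A-iB\subseteq T^*$ and $D(A-iB)=D(T)=D(T^*)$) and, through $\|Tx\|=\|T^*x\|$, the reality of $\langle Ax,Bx\rangle$ on $D(T)$, and then — using that the square of a normal operator is normal, so $T^2$ is normal and symmetric, hence self-adjoint — the splitting becomes available and $D(A)=D(B)$, $D(AB)=D(BA)$ can be read off. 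The same chain ``$T$ normal $\Rightarrow T^2$ normal and symmetric $\Rightarrow T^2$ self-adjoint'' is precisely the first addendum. Finally, if $T^2\geq0$, then $-B'^2\geq0$ forces $B'=0$, so $B=0$ and $T=A'\oplus0=A$ is self-adjoint — equivalently, $\sigma(T^2)=\{\lambda^2:\lambda\in\sigma(T)\}\subseteq[0,\infty)$ gives $\sigma(T)\subseteq\R$ — and the positivity hypothesis then makes $T$ positive.

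I expect the main obstacle to be the domain bookkeeping, above all in the ``$\Rightarrow$'' direction: the normality identity constrains $T$ only on $D(T)=D(A)\cap D(B)$ and says nothing a priori about how $D(A)$ and $D(B)$ compare, so the comparison has to be routed through the rigidity imposed by $T^2$ being self-adjoint; and one must make sure the anticommutation $AB=-BA$ is valid on enough of the domain for the spectral splitting argument to be legitimate.
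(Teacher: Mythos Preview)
Your target conclusion $AB=BA=0$ is actually correct (it follows a posteriori from the paper's argument), and once you have it the direct-sum picture does make all the assertions transparent. But the step where you pass from $AB=-BA$ to $AB=0$ via ``$A$ intertwines $B$ with $-B$, test against $E_B((\varepsilon,\infty))$'' is a genuine gap: for \emph{unbounded} $A$, an anticommutation relation valid only on $D(AB)=D(BA)$ does not automatically lift to the spectral projections of $B$. That passage is exactly where the hard analysis lives, and you have flagged it but not closed it.

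The paper takes a different and more robust route for ``$\Leftarrow$''. Assuming (WLOG) $A\geq 0$, it also reaches $AB=-BA$ on $D(AB)=D(BA)$, then observes $A^2B=BA^2$ and invokes Bernau's square-root theorem to obtain $AB=BA$; this is upgraded to $(A+I)B=B(A+I)$, and the bounded invertibility of $A+I$ then gives $(A+I)^{-1}B\subset B(A+I)^{-1}$, i.e.\ strong commutativity of $A$ and $B$, from which normality of $T=A+iB$ is standard. Note that the same device works with your choice $B\geq 0$: from $AB=-BA$ one gets $AB^2=B^2A$, and Bernau applied to $B$ yields $AB=BA$ --- so you could repair your argument by replacing the spectral-intertwining step with this.

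Two smaller points. In ``$\Leftarrow$'', even granting $AB=BA=0$, the line ``gives $TT^*=T^*T$'' is an assertion, not a proof: you must check that the summands $A',B'$ are self-adjoint on their pieces, that $T$ is closed, and that the domain of $T^*$ matches --- the paper avoids all of this by going through strong commutativity. For ``$\Rightarrow$'', the paper simply invokes the spectral theorem for unbounded normal operators; your route back through the splitting is more circuitous and inherits the same domain caveats.
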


Before proving this result, we give some auxiliary result whose
proof is very simple, thus omitted. It is worth noticing in passing
that there are unbounded self-adjoint operators $A$ and $B$ such
that $A+iB\subset 0$ (where 0 designates the zero operator on all of
$H$), yet $A\not\subset 0$ and $B\not\subset 0$. For example, let
$A$ and $B$ be unbounded self-adjoint operators such that $D(A)\cap
D(B)=\{0_H\}$ (as in, e.g., \cite{Mortad-cex-BOOK}). Assuming
$D(A)=D(B)$ makes the whole difference. Indeed:

\begin{pro}\label{kkkkkkkkkkkkkkkkkkkkkkkkkkk}
Let $A$ and $B$ be two densely defined symmetric operators with
domains $D(A),D(B)\subset H$ respectively. Assume that $D(A)=D(B)$.
If $A+iB\subset 0$, then $A\subset 0$ and $B\subset 0$. If $A$ (or
$B$) is further taken to be closed, then $A=B=0$ everywhere on $H$.
\end{pro}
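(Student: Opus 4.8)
The plan is to extract the conclusion from the hypothesis $A+iB\subset 0$ by testing it against inner products, exploiting the symmetry of $A$ and $B$ together with the crucial assumption $D(A)=D(B)$, which guarantees that $A+iB$ actually makes sense as an operator with domain $D(A)=D(B)=D(A+iB)$. First I would fix an arbitrary $x\in D(A)=D(B)$. The inclusion $A+iB\subset 0$ means precisely that $Ax+iBx=0$ for every such $x$, i.e. $Ax=-iBx$. The idea is then to take the inner product with $x$: we get $\langle Ax,x\rangle = -i\langle Bx,x\rangle$. Since $A$ and $B$ are symmetric, both $\langle Ax,x\rangle$ and $\langle Bx,x\rangle$ are real numbers. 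A real number equal to $-i$ times another real number forces both to vanish, so $\langle Ax,x\rangle=\langle Bx,x\rangle=0$ for all $x\in D(A)$.

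Next I would upgrade these quadratic-form identities to the operator identities $A\subset 0$ and $B\subset 0$ via polarization. Because $A$ is symmetric and $\langle Ax,x\rangle=0$ for all $x$ in the (complex) domain $D(A)$, the polarization identity over $\C$ gives $\langle Ax,y\rangle=0$ for all $x,y\in D(A)$; since $D(A)$ is dense, this yields $Ax=0$ for all $x\in D(A)$, i.e. $A\subset 0$. The same argument applied to $B$ gives $B\subset 0$. (Alternatively, one can simply note that a symmetric operator whose numerical range is $\{0\}$ is the zero operator on its domain.) This already establishes the first assertion.

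For the second assertion, suppose in addition that $A$ (say) is closed. We have shown $A\subset 0$, meaning $A$ agrees with the zero operator on the dense subspace $D(A)$. But the zero operator on all of $H$ is bounded, hence closed; its restriction to a proper dense domain is not closed. Since $A$ is assumed closed and $A\subset 0$, the only possibility is $D(A)=H$, so $A=0$ everywhere on $H$. Finally, since $D(B)=D(A)=H$ and $B\subset 0$, we also get $B=0$ on all of $H$, completing the proof. I do not anticipate a genuine obstacle here; the only point requiring a little care is making sure the polarization step is performed over $\C$ (so that the symmetry of $A$ is enough to recover $\langle Ax,y\rangle$ from the diagonal values), and keeping track of the fact that "$\subset 0$" refers to agreement on a possibly proper domain, which is exactly what the closedness hypothesis is there to eliminate.
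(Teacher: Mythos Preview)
Your proof is correct. The paper itself omits the proof of this proposition, declaring it ``very simple, thus omitted,'' so there is nothing to compare against; your argument---taking inner products to force $\langle Ax,x\rangle=\langle Bx,x\rangle=0$, then polarizing over $\C$ to get $A\subset 0$ and $B\subset 0$, and finally using closedness together with density to upgrade to $A=B=0$ on $H$---is precisely the kind of elementary verification the author had in mind.
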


Now, we prove Theorem \ref{main THMMMMMMMMMMMMMMMM SQ RT}.

\begin{proof}
The implication "$\Rightarrow$" follows from one of the versions of
the spectral theorem for unbounded normal operators.

Now, assume $D(A)=D(B)$ and $D(AB)=D(BA)$. Also, suppose $A$ is
positive (the proof in the case of the positiveness of $B$ is
similar).

We have
\[A^2-B^2+i(AB+BA)\subset (A+iB)A+i(A+iB)B=T^2,~~  \footnote{~In fact "$\subset$" is a full equality due to the conditions on domains, but this observation does not help much for the rest of the proof.}\]
thereby
\[A^2-B^2-T^2+i(AB+BA)\subset 0.\]
Since $D(A)=D(B)$, $D(A^2)=D(BA)$ and $D(B^2)=D(AB)$. Hence
\[D(A^2-B^2)=D(AB+BA).\]
Since $D(AB)=D(BA)$, we have
\[D(A^2-B^2-T^2)=D(AB+BA)=D(A^2)=D(B^2).\]

Since $A$ is self-adjoint, $A^2$ is densely defined. Then both
$AB+BA$ and $A^2-B^2-T^2$ are densely defined. By the symmetricity
 of $A$,  $B$, and $T^2$, we have
\[AB+BA\subset (AB+BA)^*\text{ and }A^2-B^2-T^2\subset (A^2-B^2-T^2)^*.\]
That is, both $AB+BA$ and $A^2-B^2-T^2$ are symmetric. Proposition
\ref{kkkkkkkkkkkkkkkkkkkkkkkkkkk} then yields $AB+BA\subset 0$.
Hence $AB=-BA$ (for $D(AB)=D(BA)$) and so
\[A^2B=-ABA=BA^2.\]
As $A$ is positive, we obtain $AB=BA$ by \cite{Bernau
JAusMS-1968-square root}. Hence $(A+I)B\subset B(A+I)$. But
$D[B(A+I)]=\{x\in D(A):Ax+x\in D(B)\}$. So, if $x\in D[B(A+I)]$, it
follows that $x\in D(A)=D(B)$ and $Ax\in D(B)$, i.e., $x\in D(BA)$.
Since $D(AB)=D(BA)$, we have $x\in D(AB)=D[(A+I)B]$. Thus
\[(A+I)B=B(A+I).\]

Since $A$ is self-adjoint and positive, it results that $A+I$ is
boundedly invertible. Then $(A+I)^{-1}B\subset B(A+I)^{-1}$. By
Proposition 5.27 in \cite{SCHMUDG-book-2012}, this means that $A$
commutes strongly with $B$. Accordingly $T$ is normal.

Hence, $T^2$ is normal, and so it is self-adjoint because it is
already symmetric.

Finally, we show the last statement. Assume that $T^2$ is positive,
and remember that $T$ is normal and $T^2$ is self-adjoint. Let
$\lambda$ be a complex number in $\sigma(T)$. Then
\[\lambda^2\in
[\sigma(T)]^2=\sigma(T^2).\] That is, $\lambda^2\geq0$ and so the
only possible outcome is $\lambda\in\R$. Therefore, $T$ is
self-adjoint. Since in this case
\[0\leq A=\Real T=\frac{T+T^*}{2}=T,\]
it follows that $T$ is also positive. This marks the end of the
proof.
\end{proof}

\begin{cor}(Cf. \cite{Putnam-sq-rt-normal})
Let $T=A+iB$ where $A$ and $B$ are self-adjoint (one of them is also
positive) where $D(A)=D(B)$. If $T^2=0$ on $D(T)$, then $T\in B(H)$
is normal and so $T=0$ everywhere on $H$.
\end{cor}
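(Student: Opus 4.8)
My plan is to apply Theorem \ref{main THMMMMMMMMMMMMMMMM SQ RT} to conclude that $T$ is normal, and then to combine that normality with the hypothesis $T^2=0$ to force $T$ to be the zero operator. Here $T^2$ is trivially symmetric (it is the zero operator on $D(T)$), and one of $A,B$ is positive by assumption, so the only hypothesis of Theorem \ref{main THMMMMMMMMMMMMMMMM SQ RT} that is not immediate is the domain condition $D(AB)=D(BA)$; establishing it is the one step requiring a genuine argument, and it will be the main obstacle.

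To verify $D(AB)=D(BA)$, I would set $\mathcal{D}:=D(A)=D(B)$, so that $D(T)=\mathcal{D}$, and note that the hypothesis ``$T^2=0$ on $D(T)$'' means precisely $D(T^2)=D(T)=\mathcal{D}$ (the domain of the composition $T\circ T$ is always contained in $D(T)$). Since $D(A)=D(B)$, one has $D(AB)=\{x\in\mathcal{D}:Bx\in\mathcal{D}\}=D(B^2)$ and, symmetrically, $D(BA)=D(A^2)$, so it is enough to show $D(A^2)=D(B^2)$. If $x\in D(A^2)$, then $Ax\in\mathcal{D}$, while also $Tx=Ax+iBx\in D(T)=\mathcal{D}$ because $x\in D(T^2)$; subtracting gives $Bx\in\mathcal{D}=D(B)$, i.e.\ $x\in D(B^2)$, so $D(A^2)\subseteq D(B^2)$. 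The reverse inclusion is the symmetric argument, using $Bx\in\mathcal{D}$ when $x\in D(B^2)$. Thus $D(A^2)=D(B^2)$ and hence $D(AB)=D(BA)$. This step really does use $T^2=0$, since in general $D(A)=D(B)$ fails to imply $D(A^2)=D(B^2)$ (Question 21.2.51 in \cite{Mortad-cex-BOOK}).

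With all hypotheses in hand, Theorem \ref{main THMMMMMMMMMMMMMMMM SQ RT} then gives that $T$ is normal and that $T^2$ is self-adjoint. But $T^2$ coincides with the zero operator on the dense domain $\mathcal{D}$, hence is contained in $0\in B(H)$; since a self-adjoint operator contained in another self-adjoint operator must equal it, $T^2=0$ on all of $H$, so $D(T^2)=H$. As $D(T^2)\subseteq D(T)$, the operator $T$ is everywhere defined, and being closed (it is normal) it is bounded by the closed graph theorem. Finally, for a bounded normal operator $\|T\|^2=\|T^2\|=0$, so $T=0$ on $H$; in particular $T\in B(H)$ and is (trivially) normal, as claimed. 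Alternatively, once $T$ is known to be normal and $T^2=0$ on $H$, one may finish via Lemma \ref{ker T=ker Tn T paranormal UNBD LEMM}, since a normal operator is paranormal and therefore $\ker T=\ker T^2=H$.
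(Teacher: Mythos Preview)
Your proof is correct and follows essentially the same route as the paper: both verify the missing hypothesis $D(AB)=D(BA)$ of Theorem \ref{main THMMMMMMMMMMMMMMMM SQ RT} by exploiting $D(T^2)=D(T)$ (the paper does it via $A=(T+T^*)/2$, $B=(T-T^*)/(2i)$ and the equivalence $Tx+T^*x\in D(T)\Leftrightarrow Tx-T^*x\in D(T)$, while you phrase it as $D(A^2)=D(B^2)$; these are the same computation), and then invoke the theorem. You also spell out the passage from ``$T$ normal and $T^2=0$'' to ``$T\in B(H)$ and $T=0$ on $H$'', which the paper leaves to the reader.
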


\begin{proof}
What prevents us a priori from using Theorem \ref{main
THMMMMMMMMMMMMMMMM SQ RT} is that the condition $D(AB)=D(BA)$ is
missing. But, writing $A=(T+T^*)/2$ and $B=(T-T^*)/{2i}$ (and so
$D(T)\subset D(T^*)$), we see that if $x\in D(T)$, then
\[Tx+T^*x\in D(T)\Longleftrightarrow Tx-T^*x\in D(T)\]
for $Tx\in D(T)$ (because $D(T^2)=D(T)$). In other language,
$D(AB)=D(BA)$, as needed.
\end{proof}

\section{On the operator equation $T^*T=p(T)$}

In \cite{Dehimi-Mortad-Tarcsay-1}, it was shown that if $A$ is
densely defined closed operator that satisfies the equation
$A^2=A^*A$, then $A$ is self-adjoint. This result now is a
consequence of Theorem \ref{ADJ of p(T) resolvent set non empty
THM}. Recall that such a problem was first mooted in
\cite{Laberteux-A*A=A2} (see also \cite{Wang-Zhang} and
\cite{McCullough-Rodman-A*A=A2}, cf.
\cite{Roman-Sandovici-Mortad-Dehimi-Tarcsay} and \cite{Tian DMT
AA*=A2}).

\begin{cor}\label{T*T=T2 T closed COROLL}
Let $T$ be a closed and densely defined operator verifying
$T^*T=T^2$. Then $T$ is self-adjoint.
\end{cor}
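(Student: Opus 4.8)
The plan is to show that the equation $T^*T=T^2$ actually forces $T$ to be quasinormal, and then to feed this into the $nth$-root results of Section 5, with Theorem \ref{ADJ of p(T) resolvent set non empty THM} supplying the needed identity $(T^2)^*=T^{*2}$. To set things up: since $T$ is closed and densely defined, von Neumann's theorem gives that $T^*T$, hence $T^2$, is self-adjoint and positive, so $\sigma(T^2)=\sigma(T^*T)\subset[0,\infty)\neq\C$. Applying Theorem \ref{ADJ of p(T) resolvent set non empty THM} with $p(z)=z^2$ (for which $\overline p=p$) then yields $(T^2)^*=T^{*2}$, and because $T^2=T^*T$ is self-adjoint we obtain
\[
T^{*2}=(T^2)^*=T^2 .
\]
(Equivalently, this last line is just Corollary \ref{kerha 31/07/2022 COROLLARY} with $n=2$.)

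The key observation I would make next is that $T$ is quasinormal. Using $T^*T=T^2$ together with the associativity of operator composition, one checks that
\[
TT^*T=T(T^*T)=T\,T^2=T^3=T^2\,T=(T^*T)T=T^*TT ,
\]
all of these composites having domain precisely $D(T^3)=\{x\in D(T^2):T^2x\in D(T)\}$. Hence $TT^*T=T^*TT$, and since $T$ is closed and densely defined this is exactly the definition of quasinormality.

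To finish, I would invoke Proposition \ref{GHGHjjjklllm pro 000.} in the case $n=2$: as $T$ is quasinormal with $T^2=T^{*2}$, that proposition gives that $T$ is either self-adjoint or skew-adjoint. The skew-adjoint alternative contradicts the hypothesis: if $T^*=-T$, then $T^*T=-T^2$, so $T^2=T^*T=-T^2$ forces $T^2=0$, whence $\|Tx\|^2=\langle T^*Tx,x\rangle=0$ for all $x\in D(T^*T)$; since $D(T^*T)$ is a core for the closed operator $T$, this yields $T=0$, which is (trivially) self-adjoint. Either way, $T$ is self-adjoint, as claimed.

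The only step demanding real care is the domain bookkeeping in the displayed chain for quasinormality — one must verify that $D[T(T^*T)]$, $D[(T^*T)T]$ and $D(T^3)$ genuinely coincide as subsets of $H$, so that $TT^*T=T^*TT$ holds as an identity of unbounded operators rather than merely on some smaller common domain; this uses nothing beyond $D(T^*T)=D(T^2)$. Every other ingredient is a direct citation of a result already established above, which is why the statement is indeed ``a consequence of Theorem \ref{ADJ of p(T) resolvent set non empty THM}''.
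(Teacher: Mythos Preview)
Your proof is correct and follows essentially the same route as the paper's: derive quasinormality from $T^*T=T^2$ via the chain $TT^*T=T^3=T^*TT$, use Theorem \ref{ADJ of p(T) resolvent set non empty THM} to obtain $T^2=T^{*2}$, and then invoke Proposition \ref{GHGHjjjklllm pro 000.}. The only cosmetic difference is the endgame: the paper cites the general conclusion ``$T$ is normal'' from that proposition and then appeals to an external reference (Theorem 3.2 in \cite{Dehimi-Mortad-Tarcsay-1}) for $\sigma(T)\subset\R$, whereas you use the sharper $n=2$ clause (self-adjoint or skew-adjoint) and dispose of the skew-adjoint case by hand --- your finish is in fact more self-contained.
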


\begin{proof}Clearly
\[T^*T=T^2\Longrightarrow TT^*T=T^{3}\Longrightarrow TT^*T=T^2T\Longrightarrow TT^*T=T^*TT,\]
from which we derive the quasinormality of $T$. Since $T^2$ is
self-adjoint, Theorem \ref{ADJ of p(T) resolvent set non empty THM}
yields $T^2=T^{*2}$. By Proposition \ref{GHGHjjjklllm pro 000.}, $T$
is normal. The way of showing that $\sigma(T)\subset\R$ is as in the
proof of Theorem 3.2 in \cite{Dehimi-Mortad-Tarcsay-1}. Thus, $T$ is
self-adjoint.
\end{proof}

\begin{cor}\label{TT*=T2 T closed COROLL}
Let $T$ be a densely defined closed operator satisfying
$TT^*=T^{2}$. Then $T$ is self-adjoint.
\end{cor}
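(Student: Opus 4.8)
**Proof plan for Corollary \ref{TT*=T2 T closed COROLL}.**

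The plan is to reduce the statement $TT^*=T^2$ to the already-settled case $S^*S=S^2$ by passing to the adjoint $S=T^*$. Since $T$ is densely defined and closed, $T^*$ is densely defined and closed as well, and $T^{**}=T$. The hypothesis $TT^*=T^2$ reads, in terms of $S=T^*$, as $S^*S = (T^{**})(T^{*})=(T^{**})(T^{**})^{*}$... wait, more carefully: $S^*=T^{**}=T$, so $S^*S=T\,T^{*}=TT^{*}$ and $S^2=(T^*)^2=T^{*2}$. Thus the hypothesis $TT^*=T^2$ does \emph{not} immediately become $S^*S=S^2$; rather I would first want to show $T^2=T^{*2}$, after which $TT^*=T^2=T^{*2}=T^{*2}$ gives $S^*S = S^2$ with $S=T^*$, and Corollary \ref{T*T=T2 T closed COROLL} applied to $S$ yields that $T^*=S$ is self-adjoint, hence $T=S^*=S$ is self-adjoint too.

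So the crux is to establish $T^2=T^{*2}$ from $TT^*=T^2$. First I would extract quasinormality: from $TT^*=T^2$ we get $T^*TT^*=T^*T^2$, and also $TT^*=T^2$ gives $TT^*T = T^2 T = T\,T^2 = T(TT^*)=T^2T^*$... I need to massage this into the form $TT^*T=T^*TT$. A cleaner route: $TT^*=T^2$ implies $T^*T^*\supset (TT^*)^*=(T^2)^*\supset T^{*2}$, and since $D(T^{*2})=D((T^*)^2)$, this is consistent but not yet an equality. Alternatively, note $TT^*=T^2$ forces $TT^*$ self-adjoint (von Neumann), so $T^2$ is self-adjoint; then by Corollary \ref{kerha 31/07/2022 COROLLARY} (with $n=2$) we get $T^2=T^{*2}$ directly, \emph{provided} $T$ is closable, which it is. That is the key simplification: the self-adjointness of $TT^*$ hands us $\sigma(T^2)\neq\C$ and $T^2=(T^2)^*$, and Corollary \ref{kerha 31/07/2022 COROLLARY} converts this to $T^2=T^{*2}$.

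With $T^2=T^{*2}$ in hand, I would finish as follows. We have $TT^*=T^2=T^{*2}$; taking adjoints of $TT^*=T^{*2}$ (legitimate since $T$ is closed, so $TT^*$ is self-adjoint and $(T^{*2})^*=T^{**2}=T^2$ by the argument above, using that $\sigma(T^{*2})=\sigma((TT^*))\neq\C$) gives $TT^*=T^2$ consistently, and in particular $T^*T = (TT^*)^* $... here I must be careful: what I actually want is $T^*T = T^{*2}$, i.e. to land inside Corollary \ref{T*T=T2 T closed COROLL} for the operator $T^*$. From $TT^*=T^{*2}$ take adjoints: $(TT^*)^*=TT^*$ on the left (self-adjointness), and on the right $(T^{*2})^* \supset T^{2}$, with equality because $\sigma(T^{*2})\neq\C$ forces $T^{*2}$ closed with $(T^{*2})^*=T^{2}$ by Lemma \ref{(AB)*=B*A* (BA)*=A*B* resolvent sets LEM} or Theorem \ref{ADJ of p(T) resolvent set non empty THM} applied to $T^*$. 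Hmm --- this is circular. The honest path: set $S=T^*$; then $S$ is closed, densely defined, $S^*=T$, and $S^*S = TT^* = T^2 = T^{*2}$ (using $T^2=T^{*2}$ just proved) $= S^2$. So $S^*S=S^2$, and Corollary \ref{T*T=T2 T closed COROLL} gives $S$ self-adjoint, i.e. $T^*$ self-adjoint, i.e. $T$ self-adjoint.

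I expect the main obstacle to be exactly the bookkeeping around when $(T^{*2})^*$ equals $T^2$ versus merely contains it --- the naive adjoint-of-a-product inclusions go the wrong way for unbounded operators, and one genuinely needs the spectral hypothesis $\sigma(T^2)\neq\C$ (supplied for free by the self-adjointness of $TT^*$) to upgrade inclusions to equalities via Theorem \ref{ADJ of p(T) resolvent set non empty THM}. Once $T^2=T^{*2}$ is nailed down, the reduction $S=T^*$, $S^*S=S^2$ to Corollary \ref{T*T=T2 T closed COROLL} is immediate and closes the argument.
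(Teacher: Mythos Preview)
Your proposal is correct and follows essentially the same route as the paper: from the self-adjointness of $TT^*=T^2$ (von Neumann) you invoke Theorem~\ref{ADJ of p(T) resolvent set non empty THM} (or equivalently Corollary~\ref{kerha 31/07/2022 COROLLARY}) to get $T^2=T^{*2}$, then set $S=T^*$ so that $S^*S=TT^*=T^{*2}=S^2$ and finish via Corollary~\ref{T*T=T2 T closed COROLL}. Once the false starts are pruned, this is exactly the paper's argument.
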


\begin{proof}Because $T$ is closed, it is seen that $TT^*$, or $T^2$, is
self-adjoint. Theorem \ref{ADJ of p(T) resolvent set non empty THM},
used in the special case $p(z)=z^2$, then gives
$T^2=(T^2)^*=T^{*2}$. Hence $TT^*=T^{*2}$. Setting $S=T^*$, which is
closed and densely defined, the previous equation becomes
$S^*S=S^2$. Therefore, $S$ must be self-adjoint by Corollary
\ref{T*T=T2 T closed COROLL}. Thus,
\[T=\overline{T}=T^{**}=T^*,\]
as wished.
\end{proof}

\begin{cor}\label{T*T=T*2 T closed COROLL}
Let $T$ be a densely defined closed operator satisfying
$T^*T=T^{*2}$. Then $T$ is self-adjoint.
\end{cor}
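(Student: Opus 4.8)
**Corollary \ref{T*T=T*2 T closed COROLL}** asserts that a densely defined closed $T$ with $T^*T = T^{*2}$ must be self-adjoint. The plan is to reduce this to the already-established Corollary \ref{T*T=T2 T closed COROLL} by passing to the adjoint, exactly as was done in the proof of Corollary \ref{TT*=T2 T closed COROLL}.

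The plan is to mimic the proof of Corollary \ref{TT*=T2 T closed COROLL} by transferring the hypothesis to the adjoint operator. First I would set $S = T^*$. Since $T$ is densely defined and closed, $S$ is again densely defined and closed, and, crucially, $S^* = T^{**} = T$. Rewriting the two sides of the hypothesis in terms of $S$, one has
\[
SS^* = T^*T^{**} = T^*T \quad\text{and}\quad S^2 = T^{*2},
\]
so the assumption $T^*T = T^{*2}$ is precisely the assertion $SS^* = S^2$.

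At this point I would simply invoke Corollary \ref{TT*=T2 T closed COROLL}, applied to the closed, densely defined operator $S$: it gives that $S$ is self-adjoint. Unwinding the substitution, $S = S^*$ means $T^* = T^{**} = T$, i.e. $T$ is self-adjoint, which is the claim.

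There is essentially no obstacle here, since all the real work has already been done in Corollaries \ref{T*T=T2 T closed COROLL} and \ref{TT*=T2 T closed COROLL} (which in turn rest on Theorem \ref{ADJ of p(T) resolvent set non empty THM} and Proposition \ref{GHGHjjjklllm pro 000.}). The only point requiring a moment's care is the bookkeeping of adjoints: one must use the closedness of $T$ to guarantee $T^{**} = T$, so that $S^* = T$ holds on the nose and the identities $SS^* = T^*T$, $S^2 = T^{*2}$ are genuine equalities of operators (domains included), not mere inclusions. Once that is noted, the reduction is immediate.
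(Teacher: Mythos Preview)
Your reduction is correct and is exactly the argument the paper intends: the paper leaves Corollary \ref{T*T=T*2 T closed COROLL} without proof, but the pattern set by the proof of Corollary \ref{TT*=T2 T closed COROLL} (pass to $S=T^*$ and invoke the earlier case) is precisely what you carry out. The only detail worth noting---and you do note it---is that the closedness of $T$ is what makes $S^*=T^{**}=T$, so that $SS^*=T^*T$ and $S^2=T^{*2}$ are genuine operator equalities.
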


\begin{cor}\label{TT*=T*2 T closed COROLL}
Let $T$ be a densely defined closed operator satisfying
$TT^*=T^{*2}$. Then $T$ is self-adjoint.
\end{cor}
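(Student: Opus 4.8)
The plan is to reduce the statement to Corollary \ref{T*T=T2 T closed COROLL} by passing to the adjoint, in the same spirit as the proof of Corollary \ref{TT*=T2 T closed COROLL}. First I would set $S=T^*$. Since $T$ is densely defined and closed, $S$ is again densely defined and closed, and moreover $S^*=T^{**}=\overline{T}=T$. Next I would rewrite the hypothesis $TT^*=T^{*2}$ in terms of $S$: its left-hand side is exactly $S^*S$, while its right-hand side is $S^2$, so the equation becomes $S^*S=S^2$. At this point Corollary \ref{T*T=T2 T closed COROLL} applies verbatim to $S$, yielding that $S$ is self-adjoint. Finally, $T=S^*=S$, so $T$ is self-adjoint, as desired.

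I do not anticipate a genuine obstacle here; the only points that require a word of justification are the elementary facts that $T^*$ is densely defined and closed (immediate from the closedness and dense definedness of $T$) and that $T^{**}=T$ (von Neumann), after which the rewriting of the operator equation is a tautology once one keeps careful track of which factor plays the role of $S$ and which of $S^*$.

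Should one prefer to argue without invoking Corollary \ref{T*T=T2 T closed COROLL}, an equally short alternative is available: since $T$ is closed, $TT^*$ is self-adjoint, hence $\sigma(TT^*)\subset\R$, and in particular $\sigma(T^{*2})=\sigma(TT^*)\neq\C$. Applying Theorem \ref{ADJ of p(T) resolvent set non empty THM} to $T^*$ with $p(z)=z^2$ then gives $(T^{*2})^*=T^{**2}=T^2$. Taking adjoints in $TT^*=T^{*2}$ now yields $TT^*=(TT^*)^*=(T^{*2})^*=T^2$, and Corollary \ref{TT*=T2 T closed COROLL} closes the argument. Either route is routine, and there is nothing here that I would flag as the hard part.
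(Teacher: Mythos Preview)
Your proposal is correct. The paper gives no separate proof of this corollary, leaving it implicit; your first route---setting $S=T^*$ and invoking Corollary~\ref{T*T=T2 T closed COROLL}---is precisely the tail end of the paper's own proof of Corollary~\ref{TT*=T2 T closed COROLL}, so this is exactly the intended argument, and your alternative route via Theorem~\ref{ADJ of p(T) resolvent set non empty THM} and Corollary~\ref{TT*=T2 T closed COROLL} is equally valid.
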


We have an additional result about square roots of symmetric
operators.

\begin{cor}\label{symm SQ RT T*2 sym IFF T ess s.a. COR}Let $T$ be a densely defined symmetric operator. Then
\[T \text{ is essentially self-adjoint} \Longleftrightarrow T^{*2} \text{ is symmetric}.\]
\end{cor}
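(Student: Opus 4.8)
The plan is to treat the two implications separately, the forward one being essentially free and the converse carrying the real content. For "$\Rightarrow$": if $T$ is essentially self-adjoint, then $T^{*}=\overline{T}=T^{**}$ is self-adjoint, hence in particular closed and densely defined, so by von Neumann's theorem $T^{*2}=(T^{*})^{*}T^{*}$ is self-adjoint, and therefore symmetric. (One could equally well deduce this from the spectral theorem, since the square of a self-adjoint operator is self-adjoint.)

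For "$\Leftarrow$", I would recast the goal via the basic criterion for self-adjointness: since $T$ is densely defined and symmetric, $T$ is essentially self-adjoint if and only if $\ker(T^{*}-iI)=\ker(T^{*}+iI)=\{0\}$, so it suffices to show both deficiency spaces vanish. Fix $x\in\ker(T^{*}-iI)$; then $T^{*}x=ix\in D(T^{*})$, so $x\in D(T^{*2})$ and $T^{*2}x=-x$, and likewise every $y\in\ker(T^{*}+iI)$ lies in $D(T^{*2})$ with $T^{*2}y=-y$. The idea is then to pair $x$ against the family of vectors $w\in D(\overline{T}^{*}\overline{T})$, where $\overline{T}=T^{**}$. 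Using $\overline{T}\subset T^{*}$ and $\overline{T}^{*}=T^{*}$ one checks that such a $w$ lies in $D(T^{*2})$ and that $T^{*2}w=(\overline{T}^{*}\overline{T})w$ there. Symmetry of $T^{*2}$ now yields
\[-\langle x,w\rangle=\langle T^{*2}x,w\rangle=\langle x,T^{*2}w\rangle=\langle x,(\overline{T}^{*}\overline{T})w\rangle,\]
i.e. $\langle x,(\overline{T}^{*}\overline{T}+I)w\rangle=0$ for all $w\in D(\overline{T}^{*}\overline{T})$. Since $\overline{T}$ is closed and densely defined, von Neumann's theorem makes $\overline{T}^{*}\overline{T}$ self-adjoint and nonnegative, so $\overline{T}^{*}\overline{T}+I$ is boundedly invertible and in particular maps $D(\overline{T}^{*}\overline{T})$ onto $H$; hence $x\perp H$, that is $x=0$. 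The identical computation applied to $y\in\ker(T^{*}+iI)$ gives $y=0$, so both deficiency indices vanish and $T$ is essentially self-adjoint.

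The converse is where the work lies, and within it the delicate part is the domain bookkeeping: confirming that the test vectors $w\in D(\overline{T}^{*}\overline{T})$ really belong to $D(T^{*2})$, that $T^{*2}w=(\overline{T}^{*}\overline{T})w$ on this domain (which leans on the identities $\overline{T}\subset T^{*}$ and $\overline{T}^{*}=T^{*}$), and that $\overline{T}^{*}\overline{T}+I$ is surjective onto $H$. Everything else — the relations $T^{*2}x=-x$ for $x\in\ker(T^{*}\mp iI)$, and the forward implication — is immediate.
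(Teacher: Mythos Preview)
Your proof is correct, but it takes a genuinely different route from the paper's. The paper argues the converse by first observing that $\overline{T}T^*\subset T^{*2}$ (since $T\subset T^*$), then invoking maximality of self-adjoint operators among symmetric ones to upgrade this inclusion to the equality $\overline{T}T^*=T^{*2}$, i.e., $\overline{T}(\overline{T})^*=(\overline{T})^{*2}$; it then applies the earlier corollary stating that a closed densely defined $S$ with $SS^*=S^{*2}$ must be self-adjoint. So the paper's argument is short but leans on a chain of prior results in the paper (ultimately going back to the machinery around $[p(T)]^*=\overline{p}(T^*)$ and the quasinormal analysis). Your argument, by contrast, is entirely self-contained: you go straight to the deficiency-index criterion and kill $\ker(T^*\mp iI)$ via an orthogonality computation against the range of $\overline{T}^{*}\overline{T}+I$, using only von Neumann's theorem and the symmetry of $T^{*2}$. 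What the paper's approach buys is that the corollary drops out as a one-line application of results already proved; what your approach buys is independence from that machinery and a more transparent, elementary proof that could stand on its own outside this paper. Your domain bookkeeping (that $w\in D(\overline{T}^{*}\overline{T})$ forces $w\in D(T^{*2})$ with $T^{*2}w=\overline{T}^{*}\overline{T}w$, via $\overline{T}\subset T^*$ and $\overline{T}^{*}=T^*$) is correct.
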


\begin{proof}Assume $T^{*2}$ is symmetric. Since $T$ is symmetric,
it ensues that $TT^*\subset \overline{T}T^*\subset T^{*2}$. So,
$T^{*2}$ is densely defined as $\overline{T}T^*$ is self-adjoint. By
the same token, $\overline{T}T^*\subset T^{*2}$ becomes
$\overline{T}T^*=T^{*2}$ for self-adjoint operators are maximally
symmetric.  The preceding equation may be rewritten as
$\overline{T}T^*=(\overline{T})^{*2}$. Therefore, $\overline{T}$ is
self-adjoint by Corollary \ref{TT*=T*2 T closed COROLL}.

The converse, which is plain, is left to readers.
\end{proof}

\begin{rema}
It is worth noticing that the equation $T^*T=T^2$ yields, thanks to
Theorem \ref{ADJ of p(T) resolvent set non empty THM},
$T^*T=T^{*2}$. Hence $(T^*T)^2=T^{*2}T^2$ ($=T^2T^{*2}$). This
equation has been investigated in detail in \cite{Jablonski et al
2014}.
\end{rema}

What about the more general equation $T^*T=p(T)$? We will deal
directly with the case of densely defined closed operators.

\begin{thm}\label{T*T=p(T) THM}Let $T$ be a densely defined closed operator with domain
$D(T)\subset H$, and such that
\[T^*T=p(T),\]
where $p(z)$ is a polynomial of degree $n\geq1$. Then the following
assertions hold:
\begin{enumerate}
  \item $T$ is quasinormal.
  \item If $n\neq 2$, then $T\in B(H)$.
  \item\label{3 00000000.} Let $n=2$. Assume that $p(z)$ has the property that whenever
  there is (a priori complex) $z$ such that $p(z)\geq0$, then $z\in\R$. Then $T$ is self-adjoint.
\end{enumerate}
\end{thm}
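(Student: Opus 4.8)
**Proof plan for Theorem \ref{T*T=p(T) THM}.**

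The plan is to start from the equation $T^*T = p(T)$ and first extract quasinormality, since $T^*T$ is always self-adjoint and positive when $T$ is closed. Write $p(z) = a_n z^n + \cdots + a_1 z + a_0$. Multiplying the defining equation on the left by $T$ gives $TT^*T = Tp(T)$, and since $Tp(T) = p(T)T$ on the appropriate domain (as $T$ commutes with polynomials of itself, being careful that $D(T^{n+1}) = D(Tp(T)) = D(p(T)T)$), we get $TT^*T = p(T)T = T^*TT$, which is precisely quasinormality of $T$. This handles item (1). For item (3) with $n=2$: since $T^*T = p(T)$ is self-adjoint, Theorem \ref{ADJ of p(T) resolvent set non empty THM} (applied to the degree-$2$ polynomial $p$, using $\sigma[p(T)] \neq \C$, which holds because $p(T)$ is self-adjoint) yields $[p(T)]^* = \overline{p}(T^*)$. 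But $[p(T)]^* = p(T) = T^*T$, and one checks that $\overline{p}(T^*) = \overline{p}(T^*)$; comparing, and using that $T$ is quasinormal with $T^*T$ self-adjoint, I would aim to push this to $T^2 = T^{*2}$, then invoke Proposition \ref{GHGHjjjklllm pro 000.} (quasinormal with $T^2 = T^{*2}$ implies $T$ normal, indeed self-adjoint or skew-adjoint) together with the hypothesis on $p$ to rule out the skew-adjoint case and the spectral condition $\sigma(T) \subset \R$.

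More carefully for (3): once $T$ is quasinormal and $T^*T = p(T) = a_2 T^2 + a_1 T + a_0 I$ is self-adjoint, I would use the polar decomposition $T = U|T| = |T|U$ and the identity $T^*T = |T|^2$ to rewrite the equation in terms of $|T|$ and $U$. Since $|T|^2$ is self-adjoint, $a_2 T^2 + a_1 T + a_0 I$ is self-adjoint, hence equals its adjoint $\overline{a_2}\, T^{*2} + \overline{a_1}\, T^* + \overline{a_0} I$; since $T^{*2} \subseteq (T^2)^*$ and here (by Theorem \ref{ADJ of p(T) resolvent set non empty THM}) we actually have $(T^2)^* = T^{*2}$, I can subtract and deduce a polynomial relation forcing, after using the normality that will follow, the coefficients to be real and $T^2 = T^{*2}$. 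Then Proposition \ref{GHGHjjjklllm pro 000.} gives that $T$ is normal (indeed self-adjoint or skew-adjoint). Finally, for $\lambda \in \sigma(T)$, the spectral mapping theorem gives $p(\lambda) \in \sigma(p(T)) = \sigma(T^*T) \subset [0,\infty)$, so $p(\lambda) \geq 0$; the hypothesis on $p$ then forces $\lambda \in \R$, so $\sigma(T) \subset \R$, and a normal operator with real spectrum is self-adjoint.

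For item (2), the strategy is different: I expect that if $n \geq 3$, the equation $T^*T = p(T)$ with $T$ quasinormal forces $T$ to be bounded, via a spectral argument on $|T|$. Writing $T = U|T|$ with $U$ a partial isometry commuting with $|T|$, one has $|T|^2 = T^*T = p(U|T|)$, and expanding $p(U|T|) = a_n U^n |T|^n + \cdots + a_1 U|T| + a_0 I$; restricting attention to the spectral behaviour of $|T|$ and using that $|T|^2$ is self-adjoint and positive while the right side involves $|T|^n$ with $n \geq 3$, I would compare domains: $D(T^*T) = D(|T|^2)$ must equal $D(p(T)) = D(T^n) = D(|T|^n)$, so $D(|T|^2) = D(|T|^n)$, and for a self-adjoint positive operator this domain equality with $n > 2$ forces $|T|$ to be bounded, hence $T \in B(H)$. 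The case $n=1$ should be similar or easier (then $T^*T = a_1 T + a_0 I$ forces $D(T^*T) = D(T)$, again giving boundedness). The main obstacle I anticipate is item (3): carefully justifying the passage from "$a_2 T^2 + a_1 T + a_0 I$ is self-adjoint" to "$T^2 = T^{*2}$ and the $a_i$ are real" requires handling the unbounded adjoints and domains with care, and making sure quasinormality is genuinely used to upgrade $T^{*2} \subseteq (T^2)^*$ to equality — this is exactly where Theorem \ref{ADJ of p(T) resolvent set non empty THM} and Proposition \ref{GHGHjjjklllm pro 000.} must be deployed in the right order.
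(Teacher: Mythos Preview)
Your arguments for items (1) and (2) are essentially identical to the paper's: quasinormality via $TT^*T=Tp(T)=p(T)T=T^*TT$, and boundedness via the domain equality $D(|T|^2)=D(T^*T)=D(T^n)=D(|T|^n)$ (using $|T^n|=|T|^n$ for quasinormal $T$), which forces $|T|\in B(H)$ when $n\geq 3$, with the case $n=1$ handled by $D(T^*T)=D(T)$.

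For item (3), however, you take a detour that does not close. You aim to extract $T^2=T^{*2}$ from the self-adjointness of $p(T)=a_2T^2+a_1T+a_0I$, in order to feed Proposition~\ref{GHGHjjjklllm pro 000.} and obtain normality, and only then run the spectral argument. But from $a_2T^2+a_1T+a_0I=\overline{a_2}T^{*2}+\overline{a_1}T^*+\overline{a_0}I$ there is no clean way to isolate $T^2=T^{*2}$ or force the coefficients to be real without already knowing something like $T=T^*$; you yourself flag this step as the main obstacle, and indeed it is a genuine gap, not just a technicality. The appeal to ``the normality that will follow'' is circular.

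The paper bypasses this entirely. Once $T$ is quasinormal it is hyponormal, and the spectral argument you already wrote down (for $\lambda\in\sigma(T)$, $p(\lambda)\in\sigma(T^*T)\subset[0,\infty)$, hence $\lambda\in\R$ by the hypothesis on $p$) gives $\sigma(T)\subset\R$ directly. The key fact you are missing is that a \emph{closed hyponormal} operator with real spectrum is already self-adjoint (see the reference to \cite{Dehimi-Mortad-BKMS} in the paper); you do not need normality first. So drop the $T^2=T^{*2}$ manoeuvre altogether and go straight from quasinormal $\Rightarrow$ hyponormal $\Rightarrow$ (spectral argument) $\sigma(T)\subset\R$ $\Rightarrow$ self-adjoint.
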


\begin{proof}\hfill
\begin{enumerate}
  \item The observation
  \[T^*T=p(T)\Longrightarrow TT^*T=Tp(T)=p(T)T=T^*TT\]
shows the quasinormality of $T$.
  \item If $n=1$, then $D(T^*T)=D(T)$, which gives $T\in B(H)$
  because $T$ is closed (Exercise 11 on Page 427 in
  \cite{Dieudonne-ANALysis-II}, see also Lemma 2.1 in
  \cite{Sebestyen-Stochel-JMAA}).

  Assume now that $n\geq 3$. Since $T$ is quasinormal and closed, $|T^n|=|T|^n$. Then
  \[D(|T|^2)=D(T^*T)=D[p(T)]=D(T^n)=D(|T^n|)=D(|T|^n),\]
  and because $|T|$ is self-adjoint and positive, Lemma A.1 in
  \cite{Sebestyen-Stochel-JMAA} yields $|T|\in B(H)$. Therefore, for
  a certain $M\geq0$ and all $x\in H$
  \[\|Tx\|=\||T|x\|\leq M\|x\|,\]
  that is, $T\in B(H)$.
  \item First, recall that a closed hyponormal operator with real
  spectrum is self-adjoint (see the proof of Theorem 8 in
  \cite{Dehimi-Mortad-BKMS}). It is also known that quasinormality
  yields hyponormality. So, we need only show that
  $\sigma(T)\subset\R$. Let $\lambda\in\sigma(T)$. Then,
  \[p(\lambda)\in p[\sigma(T)]=\sigma[p(T)]=\sigma(T^*T)\subset
  [0,\infty).\]
  By the hypothesis on $p(z)$, we immediately obtain
  $\sigma(T)\subset \R$, as desired.
\end{enumerate}
\end{proof}

\begin{rema}There are certain interpretations of the condition on
$p(z)$ above. For instance, it could be replaced by
$p(z)=az^2+bz+c$, with $a>0$ and $4ac-b^2\leq0$, as kindly indicated
by Prof. Spiros Konstantogiannis. In the counterexamples section, I
give simple examples showing the indispensability of these
conditions on the coefficients.
\end{rema}

\begin{exa}
Let $T$ be a densely defined closed operator such that
$T^*T=T^2-3T+2I$. Then $T$ is self-adjoint. We need only check that
$\sigma(T)\subset \R$. Let $\lambda:=\alpha+i\beta\in\sigma(T)$.
Then $\lambda^2-3\lambda+2\geq0$. For the inequality to make sense,
we must have $\beta=0$ (while $\alpha^2-3\alpha+2\geq0$). In
consequence, $\lambda\in\R$, as needed.
\end{exa}

\begin{rema}
It is clear that $T^*T=p(T)$, where $\deg p(z)=1$, directly gives
the self-adjointness of $T$ when $p$ has real coefficients only. If
$p(z)$ has at least one complex coefficient, then $T$ is normal.
\end{rema}

\begin{cor}
Let $T$ be a densely defined, closed, and symmetric operator. If
\[T^*T=p(T),\]
where $p(z)$ is a polynomial of degree $n\geq1$, then $T$ is
self-adjoint.
\end{cor}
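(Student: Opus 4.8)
The plan is to mirror the trichotomy of Theorem~\ref{T*T=p(T) THM} on the degree $n$ of $p$, and in each branch to reduce to a result already proved rather than to manipulate the coefficients of $p$ directly.

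If $n=1$, I would simply invoke Theorem~\ref{T*T=p(T) THM}(2), which already gives $T\in B(H)$; since a symmetric operator defined on all of $H$ is self-adjoint, we are done. So assume from now on that $n\geq 2$.

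The heart of the argument is the elementary remark that symmetry forces $T^{2}\subset T^{*}T$. Indeed, if $x\in D(T^{2})$, then $x\in D(T)$ and $Tx\in D(T)\subset D(T^{*})$, so $x\in D(T^{*}T)$ and, since $T^{*}$ restricts to $T$ on $D(T)$,
\[
T^{*}Tx=T^{*}(Tx)=T(Tx)=T^{2}x .
\]
On the other hand, $D(T^{*}T)=D[p(T)]=D(T^{n})$, and $D(T^{n})\subset D(T^{2})$ because $n\geq 2$. Hence $D(T^{2})=D(T^{*}T)$, and combined with the inclusion $T^{2}\subset T^{*}T$ this yields the full equality $T^{*}T=T^{2}$. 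Now Corollary~\ref{T*T=T2 T closed COROLL}, which needs nothing beyond $T$ being densely defined and closed, tells us that $T$ is self-adjoint. (Alternatively, once $T^{*}T=T^{2}$ is known, one may replace $p$ by the real polynomial $z^{2}$ and quote Corollary~\ref{sigma(p(T)) real T symm T is s.a. CORO}.)

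I do not expect any genuine obstacle here; the only thing to get right is the observation that the general degree-$n$ equation collapses to the special equation $T^{*}T=T^{2}$, so that one never has to check the positivity hypothesis of Theorem~\ref{T*T=p(T) THM}(3) or track the coefficients of $p$. One should just be mildly careful that the inclusion $T^{2}\subset T^{*}T$ genuinely uses $T\subset T^{*}$ (it is false for merely closable $T$) and that ``one operator contained in another, with equal domains'' does give equality.
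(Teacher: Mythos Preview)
Your argument is correct, and it takes a genuinely different route from the paper's.

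The paper's proof is a two-liner: from Theorem~\ref{T*T=p(T) THM}(1) it extracts that $T$ is quasinormal, and then it cites the fact (recalled earlier via \cite{Uchiyama-1993-QUASINORMAL}) that a closed symmetric quasinormal operator is automatically self-adjoint. So the paper treats all degrees $n\geq 1$ uniformly and outsources the final step to Uchiyama's characterization.

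Your approach instead observes that symmetry forces the general equation $T^*T=p(T)$ (with $n\geq 2$) to collapse to the special equation $T^*T=T^2$, via the inclusion $T^2\subset T^*T$ together with $D(T^*T)=D(T^n)\subset D(T^2)$. You then invoke Corollary~\ref{T*T=T2 T closed COROLL}. This is a nice structural observation: the polynomial $p$ disappears entirely, and one never needs the coefficient hypothesis of Theorem~\ref{T*T=p(T) THM}(3) or the external Uchiyama result. The cost is a small case split on $n=1$ versus $n\geq 2$, and the fact that Corollary~\ref{T*T=T2 T closed COROLL} itself routes through quasinormality and Proposition~\ref{GHGHjjjklllm pro 000.}, so the underlying machinery is comparable. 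Still, your reduction ``$T^*T=p(T)$ plus symmetry $\Rightarrow$ $T^*T=T^2$'' is an observation the paper does not make and is worth recording.
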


\begin{proof} By Theorem \ref{T*T=p(T) THM}, $T$ is quasinormal, and
we have already recalled that a quasinormal symmetric operator is
self-adjoint.
\end{proof}

\begin{cor}
Let $T$ be a densely defined, closed, and symmetric operator. If
\[TT^*=p(T),\]
where $p(z)$ is a polynomial of degree $n\geq1$, then $T$ is
self-adjoint.
\end{cor}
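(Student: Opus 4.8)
The plan is to deduce the result from Theorem~\ref{T*T=p(T) THM} by passing to the adjoint $S:=T^*$, which is again densely defined and closed, and showing that $S$ satisfies an equation of the form $S^*S=q(S)$.

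The first step is to establish the equality $\overline{p}(T^*)=TT^*$. The inclusion $\overline{p}(T^*)\subseteq [p(T)]^*$ always holds, and since $T$ is closed $[p(T)]^*=(TT^*)^*=TT^*$, so $\overline{p}(T^*)\subseteq TT^*$. For the reverse, the symmetry $T\subseteq T^*$ gives $p(T)\subseteq p(T^*)$, so that $D(TT^*)=D(p(T))\subseteq D(p(T^*))=D\big((T^*)^n\big)=D\big(\overline{p}(T^*)\big)$; comparing this with the domain inclusion coming from $\overline{p}(T^*)\subseteq TT^*$ shows that $\overline{p}(T^*)$ and $TT^*$ have the same domain, hence coincide.

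With this in hand, since $T$ is closed we have $(T^*)^*=T$, so $S^*S=(T^*)^*T^*=TT^*=\overline{p}(T^*)=\overline{p}(S)$. This is an instance of the equation treated in Theorem~\ref{T*T=p(T) THM} (with the polynomial $\overline{p}$, still of degree $n\ge1$), so that theorem gives that $S$ is quasinormal, hence subnormal, hence hyponormal; in particular $D(S)\subseteq D(S^*)$, i.e. $D(T^*)\subseteq D\big((T^*)^*\big)=D(T)$. Since symmetry also gives $D(T)\subseteq D(T^*)$, we get $D(T)=D(T^*)$ and therefore $T=T^*$.

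The genuinely delicate part is the first step, and it is exactly there that the symmetry of $T$ is used; once $S^*S=\overline{p}(S)$ is available, everything else is a mechanical consequence of results already proved. (Alternatively, one can avoid Theorem~\ref{T*T=p(T) THM} and argue through deficiency indices: for $x\in\ker(T^*\mp iI)$ we get $x\in D\big((T^*)^n\big)=D\big(\overline{p}(T^*)\big)\subseteq D(TT^*)$, so $T^*x=\pm ix\in D(T)$ and hence $x\in D(T)$; then $Tx=T^*x=\pm ix$ together with the symmetry of $T$ forces $\pm i\|x\|^2=\langle Tx,x\rangle\in\R$, so $x=0$, and $T$, being closed and symmetric with trivial deficiency spaces, is self-adjoint.)
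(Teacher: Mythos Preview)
Your argument is correct, and it takes a genuinely different route from the paper's one-line proof. The paper simply observes that the closedness of $T$ makes $TT^*$ self-adjoint, so $p(T)$ is self-adjoint, and then invokes Corollary~\ref{sigma(p(T)) real T symm T is s.a. CORO} (a symmetric operator with $p(T)$ self-adjoint is itself self-adjoint). You instead pass to the adjoint $S=T^*$, use the symmetry $T\subset T^*$ to upgrade the always-true inclusion $\overline{p}(T^*)\subset (TT^*)^*=TT^*$ to an equality by matching domains, and then feed $S^*S=\overline{p}(S)$ into Theorem~\ref{T*T=p(T) THM} to get that $S$ is quasinormal, hence hyponormal, which forces $D(T^*)\subset D(T)$ and finishes the proof. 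Your deficiency-index alternative at the end is yet a third route, and is perhaps the most self-contained.

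One point worth noting in your favour: the corollary the paper invokes is stated for \emph{real} polynomials, while the present statement imposes no such restriction on $p$. Your argument goes through for arbitrary complex $p$ (the key identity $D\big(p(T^*)\big)=D\big((T^*)^n\big)=D\big(\overline{p}(T^*)\big)$ is insensitive to the coefficients), so in that sense your proof is slightly more general than the paper's as written.
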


\begin{proof} In this case, we cannot get the quasinormality of $T$
out of $TT^*=p(T)$. Nonetheless, $T$ is self-adjoint by Corollary
\ref{sigma(p(T)) real T symm T is s.a. CORO}.
\end{proof}

\section{On the adjoint of $TT^*$ and $T^*T$}

Let $T$ be densely defined and closable. We will deal with the
validity of, e.g., $(TT^*)^*=TT^*$, where $TT^*$ is densely defined.
Recall that $TT^*$ is symmetric. If $TT^*$ is further assumed to be
closed, then, as is known, either $\sigma(TT^*)=\C$; or
$\sigma(TT^*)=\{\lambda\in\C:\im \lambda\geq0\}$; or
$\sigma(TT^*)=\{\lambda\in\C:\im \lambda\leq0\}$; or
$\sigma(TT^*)\subset\R$. By assuming below that $\sigma(TT^*)\neq
\C$, and so $TT^*$ is closed; we show that the only option left is
$\sigma(TT^*)\subset\R$.

\begin{thm}\label{ADJ AA* A unclosed THM}Let $T$ be a densely
defined closable operator. If $TT^*$ is densely defined, then
\[\sigma(TT^*)\neq \C\Longleftrightarrow (TT^*)^*=\overline{T}T^*=TT^*.\]

Similarly, when $T^*T$ is densely defined, then
\[\sigma(T^*T)\neq \C\Longleftrightarrow(T^*T)^*=T^*\overline{T}=T^*T.\]

\end{thm}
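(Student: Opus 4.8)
The statement is an "iff": one direction says that if $\sigma(TT^*)\neq\C$ then $TT^*$ is self-adjoint (and equals $\overline{T}T^*$, $(TT^*)^*$); the converse is immediate since a self-adjoint operator has nonempty resolvent set. So I would concentrate on the forward implication. The natural tool is Theorem~\ref{ADJ of p(T) resolvent set non empty THM} applied not to $T$ itself but to the auxiliary operator $S=\overline{T}$ or, more precisely, to the $2\times2$ block operator of the kind used already in the proof of Proposition~\ref{(AB)*=B*A* (BA)*=A*B* resolvent sets IMPROVED LEM}. Concretely, I would set
\[
R=\left(\begin{array}{cc} 0 & \overline{T} \\ T^* & 0 \end{array}\right),\qquad D(R)=D(\overline{T})\oplus D(T^*),
\]
which is closed because $\overline{T}$ and $T^*$ are closed (here I use that $T$ closable makes $T^*$ densely defined, hence $\overline{T}=T^{**}$ is well defined and closed, and $T^*$ is automatically closed). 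Then
\[
R^2=\left(\begin{array}{cc} \overline{T}T^* & 0 \\ 0 & T^*\overline{T} \end{array}\right).
\]
Since $T^*$ is closed and densely defined, von Neumann's theorem already tells us $T^*\overline{T}=(T^*)^*(T^*)\,$-type\,$\ldots$ actually $\overline{T}T^*=(T^*)^*(T^*)=(T^*)^*T^{***}$; cleaner: $T^*\overline{T}$ and $\overline{T}T^*$ are the "$A^*A$" and "$AA^*$" of the closed operator $A=T^*$, hence both are self-adjoint and positive whenever they are densely defined.

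**The key steps.** First, I note $\overline{T}T^*\subset TT^*$ is false as written — rather $TT^*\subset\overline{T}T^*$, since $T\subset\overline{T}$ forces $TT^*$ to be a restriction of $\overline{T}T^*$; and $\overline{T}T^*$ is self-adjoint by von Neumann's theorem because $T^*$ is closed and densely defined (its domain being dense by hypothesis that $TT^*$, hence $T^*$, is densely defined). So $TT^*\subset\overline{T}T^*=(\overline{T}T^*)^*\subset (TT^*)^*$, and the content of the theorem is to upgrade these inclusions to equalities, i.e. to show $TT^*$ is already closed, equivalently that $D(TT^*)=D(\overline{T}T^*)$. Second, I invoke the hypothesis $\sigma(TT^*)\neq\C$: $TT^*$ is symmetric, so if it has a real point in its resolvent set it is essentially self-adjoint; but more is true — the resolvent set being nonempty and $TT^*\subset\overline{T}T^*$ with the latter self-adjoint forces $TT^*=\overline{T}T^*$, because a self-adjoint operator is maximally symmetric and a proper symmetric restriction of it cannot have the same resolvent behaviour. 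Precisely: pick $\lambda\in\rho(TT^*)$; then $\lambda I-TT^*$ is a bijection of $D(TT^*)$ onto $H$ with bounded inverse; but $\lambda I-\overline{T}T^*$ extends it and is injective (for $\lambda\notin\R$, or using the spectral theorem for $\lambda$ real outside the spectrum of the self-adjoint $\overline{T}T^*$), so surjectivity of the restriction forces $D(TT^*)=D(\overline{T}T^*)$. Third, once $TT^*=\overline{T}T^*$, self-adjointness and the identity $(TT^*)^*=TT^*$ follow, and the chain $(TT^*)^*=\overline{T}T^*=TT^*$ is exactly the claimed equality. The $T^*T$ statement is the verbatim twin, obtained either by the same block argument reading off the other diagonal entry, or simply by replacing $T$ by $T^*$ (noting $T^*$ is closed, so $\overline{T^*}=T^*$, and $(T^*)^*=\overline{T}$, giving $T^*\overline{T}$ in place of $\overline{T}T^*$).

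**The main obstacle.** The delicate point is justifying that a symmetric operator $TT^*$ with $\sigma(TT^*)\neq\C$, sitting inside the self-adjoint $\overline{T}T^*$, must actually equal it — one has to be careful because a priori $\sigma(TT^*)$ could be a closed half-plane even when the operator is not self-adjoint, and one must rule this out. The clean way is: by the classification recalled in the paragraph preceding the theorem, $\sigma(TT^*)\neq\C$ forces $TT^*$ to be closed (a non-closed operator has full spectrum); then a closed symmetric operator contained in a self-adjoint operator and having a resolvent point is automatically self-adjoint, hence equals $\overline{T}T^*$. So the real work is assembling (i) $T^*$ densely defined $\Rightarrow$ $\overline{T}T^*$ self-adjoint (von Neumann), (ii) $TT^*\subset\overline{T}T^*$, and (iii) the maximality argument closing the gap; none of these is deep, but (iii) is where a hasty proof would go wrong.
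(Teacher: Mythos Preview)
Your proposal is correct and, once you discard the $2\times 2$ block operator $R$ (which you introduce and then never use), it is essentially the paper's own argument: both proofs rest on the same three ingredients --- (i) $\overline{T}T^*$ is self-adjoint by von Neumann's theorem applied to the closed densely defined operator $T^*$, (ii) $TT^*\subset\overline{T}T^*$, and (iii) a maximality step using a point $\lambda\in\rho(TT^*)$. The only cosmetic difference is that the paper first shows $\overline{T}T^*=(TT^*)^*$ (via $\overline{T}T^*\subset(TT^*)^*$ together with the bounded invertibility of both $\overline{T}T^*-\overline{\lambda}I$ and $(TT^*)^*-\overline{\lambda}I$) and then recovers $TT^*=\overline{T}T^*$ from closedness of $TT^*$, whereas you go straight for $TT^*=\overline{T}T^*$ using surjectivity of $TT^*-\lambda I$ against injectivity of $\overline{T}T^*-\lambda I$. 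One small point worth tightening: your parenthetical ``or using the spectral theorem for $\lambda$ real outside the spectrum of $\overline{T}T^*$'' is circular as stated, but as you yourself note in the final paragraph, $\sigma(TT^*)\neq\C$ forces $TT^*$ closed, and then the standard trichotomy for closed symmetric operators guarantees a \emph{non-real} $\lambda\in\rho(TT^*)$, which is all you need.
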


\begin{proof}Clearly
\[\overline{T}T^*\subset (TT^*)^*.\]
Since $\sigma(TT^*)\neq \C$, consider a complex number $\lambda$
such that $TT^*-\lambda I$ is (boundedly) invertible. Then
\[(TT^*-\lambda I)^*=(TT^*)^*-\overline{\lambda} I\]
remains invertible.

On the other hand, since $\overline{T}T^*$ is self-adjoint,
$\overline{T}T^*-\overline{\lambda} I$ too is (boundedly)
invertible. Therefore
\[\overline{T}T^*\subset (TT^*)^*\Longrightarrow \overline{T}T^*-\overline{\lambda} I\subset (TT^*)^*-\overline{\lambda} I.\]
Since, e.g., $\overline{T}T^*-\overline{\lambda} I$ is surjective
and $(TT^*)^*-\overline{\lambda} I$ is injective, Lemma 1.3 in
\cite{SCHMUDG-book-2012} gives $\overline{T}T^*-\overline{\lambda}
I=(TT^*)^*-\overline{\lambda} I$ or merely $\overline{T}T^*=
(TT^*)^*$.

To show the other equality, we may reason as above by using
$TT^*\subset \overline{T}T^*$. Alternatively, here is a different
approach: Since $\overline{T}T^*$ is self-adjoint, we have
\[\overline{T}T^*=(\overline{T}T^*)^*=(TT^*)^{**}=\overline{TT^*}=TT^*\]
where the closedness of $TT^*$ is obtained from $\sigma(TT^*)\neq
\C$. This settles the implication "$\Rightarrow$".

Conversely, if $(TT^*)^*=\overline{T}T^*=TT^*$, then because $TT^*$
is self-adjoint and positive, $\sigma(TT^*)\subset [0,\infty)$, that
is, $\sigma(TT^*)\neq \C$.

The second assertion of the theorem may be shown analogously, thus
omitted.
\end{proof}

\begin{rema}
Recall that a condition like $\sigma(A)\neq \C$ does not necessarily
give the density of $D(A)$ (e.g., let $A:=I_E$ be the identity
operator restricted to an unclosed non-dense domain $E$).
\end{rema}

\begin{rema}
Despite the fact the previous proof is short and elementary, there
is yet another proof of the implication "$\Rightarrow$", which was
communicated to me by a reader, who prefers to remain anonymous. It
reads: By assumptions, $S:=TT^*$ is a densely defined, and symmetric
operator with non-empty resolvent set, so $S$ is closed. Since $S$
is positive as well, it has equal deficiency indices which, together
with $\sigma(S)\neq \C$, imply $\sigma(S)\subset \R$. Hence $S$ is
self-adjoint.
\end{rema}

The following consequence is presented without proof.

\begin{cor}Let $T$ be a densely
defined closable operator. If $TT^*$ is densely defined, then
\[\sigma(TT^*)\neq \C\Longleftrightarrow \sigma(TT^*)\subset [0,\infty).\]

Similarly, when $T^*T$ is densely defined, then
\[\sigma(T^*T)\neq \C\Longleftrightarrow\sigma(T^*T)\subset [0,\infty).\]
\end{cor}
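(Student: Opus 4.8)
The plan is to read this off directly from Theorem \ref{ADJ AA* A unclosed THM}, since the corollary is essentially a restatement of one of its consequences combined with the elementary spectral theory of positive self-adjoint operators. I would treat the two assertions (the one for $TT^*$ and the one for $T^*T$) in parallel, proving only the first and remarking that the second follows by the identical argument with the roles of $T$ and $T^*$ interchanged.

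For the implication "$\Leftarrow$", nothing is needed beyond the observation that $[0,\infty)\neq\C$: if $\sigma(TT^*)\subset[0,\infty)$ then a fortiori $\sigma(TT^*)\neq\C$. For the implication "$\Rightarrow$", the key step is to invoke Theorem \ref{ADJ AA* A unclosed THM}: the hypothesis $\sigma(TT^*)\neq\C$ together with the density of $D(TT^*)$ yields $(TT^*)^*=\overline{T}T^*=TT^*$, i.e. $TT^*$ is self-adjoint (and, being closable with non-empty resolvent set, closed). It then remains to note that $TT^*$ is positive: for every $x\in D(TT^*)\subset D(T^*)$ we have $\langle TT^*x,x\rangle=\langle T^*x,T^*x\rangle=\|T^*x\|^2\geq 0$. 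A self-adjoint positive operator has spectrum contained in $[0,\infty)$ by the spectral theorem, which gives $\sigma(TT^*)\subset[0,\infty)$.

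I do not anticipate any genuine obstacle here; the only thing to be careful about is bookkeeping of domains and the closability hypothesis. In particular one should make sure that "densely defined $TT^*$" is exactly the standing hypothesis needed to apply Theorem \ref{ADJ AA* A unclosed THM}, and that positivity is being asserted on the correct domain $D(TT^*)$ (which is contained in $D(T^*)$, so that $\|T^*x\|$ makes sense). With those points noted, the corollary follows in a couple of lines, which is presumably why the author chose to present it without proof.

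For the $T^*T$ statement, the same scheme applies verbatim: $\sigma(T^*T)\neq\C$ and density of $D(T^*T)$ give $(T^*T)^*=T^*\overline{T}=T^*T$ by Theorem \ref{ADJ AA* A unclosed THM}, positivity comes from $\langle T^*Tx,x\rangle=\|Tx\|^2\geq 0$ for $x\in D(T^*T)\subset D(T)$, and the spectral theorem then yields $\sigma(T^*T)\subset[0,\infty)$; the reverse implication is again trivial.
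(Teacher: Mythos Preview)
Your proposal is correct and is exactly the intended argument: the paper presents this corollary without proof, and the reasoning you give---apply Theorem~\ref{ADJ AA* A unclosed THM} to obtain self-adjointness of $TT^*$ (resp.\ $T^*T$), then use positivity $\langle TT^*x,x\rangle=\|T^*x\|^2\geq 0$ and the spectral theorem to get $\sigma(TT^*)\subset[0,\infty)$---is precisely the computation already carried out in the converse direction of the proof of Theorem~\ref{ADJ AA* A unclosed THM}. Nothing more is needed.
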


By scrutinizing the previous proof, it turns out that we may show a
stronger result, given that we have not required the closedness of
the spectrum in this paper.

\begin{pro}
Let $T$ be a densely defined closable operator. If $TT^*$ is densely
defined and $(TT^*)^*$ does not possess at least one complex
eigenvalue, then
\[(TT^*)^*=\overline{T}T^*=TT^*.\]
In particular, $TT^*$ is self-adjoint.
\end{pro}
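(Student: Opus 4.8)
The plan is to mimic the proof of Theorem \ref{ADJ AA* A unclosed THM} almost verbatim, replacing the resolvent-set hypothesis $\sigma(TT^*)\neq\C$ by the weaker hypothesis that $(TT^*)^*$ has no complex eigenvalue, and checking that every step of the earlier argument still goes through. First I would record the always-valid inclusion $\overline{T}T^*\subset(TT^*)^*$, which comes from the fact that $TT^*\subset\overline{T}T^*$ together with the basic inclusion-reversing property of adjoints (or directly from $T^{**}=\overline{T}$ and the product rule for adjoints, since $T^*\in\,$does not need to be bounded here — one uses $(TT^*)^*\supset T^{**}(T^*)^{*}=\overline{T}\,\overline{T^*}\supset\overline{T}T^*$, valid because these are adjoints of a densely defined product). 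The key object is $S:=\overline{T}T^*$, which by von Neumann's theorem is densely defined, self-adjoint, and positive, since $\overline T=T^{**}$ is closed and densely defined.

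The crucial point is that self-adjointness and positivity of $S$ force $\sigma(S)\subset[0,\infty)$; in particular, every $\lambda\in\C\setminus[0,\infty)$ lies in $\rho(S)$, so $S-\lambda I$ is boundedly invertible — hence surjective. Now I would pick any such $\lambda$ with, say, $\Real\lambda<0$. Then $S-\lambda I=\overline{T}T^*-\lambda I$ is surjective, while $(TT^*)^*-\lambda I$ is injective: indeed its kernel is exactly the $\lambda$-eigenspace of $(TT^*)^*$, which is trivial both because $\lambda\notin[0,\infty)$ is the prohibited case — more precisely, the hypothesis rules out complex (non-real) eigenvalues, and one must separately note that a negative real $\lambda$ cannot be an eigenvalue of $(TT^*)^*$ either. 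To handle this cleanly I would simply choose $\lambda$ non-real (e.g. $\lambda=i$): then injectivity of $(TT^*)^*-\lambda I$ is immediate from the no-complex-eigenvalue hypothesis, and surjectivity of $\overline{T}T^*-\lambda I$ holds because $i\in\rho(\overline T T^*)$ by self-adjointness. From the inclusion $\overline{T}T^*-\lambda I\subset(TT^*)^*-\lambda I$ with the left side surjective and the right side injective, Lemma 1.3 in \cite{SCHMUDG-book-2012} yields $\overline{T}T^*-\lambda I=(TT^*)^*-\lambda I$, hence $\overline{T}T^*=(TT^*)^*$.

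It remains to get the second equality $(TT^*)^*=TT^*$, equivalently $\overline{T}T^*=TT^*$. Here I would argue exactly as in the "alternative approach" of the earlier proof: having shown $(TT^*)^*=\overline{T}T^*$ is self-adjoint, $TT^*$ is in particular closed (its adjoint being densely defined and itself closed), so $TT^*=\overline{TT^*}=(TT^*)^{**}=(\overline{T}T^*)^*=\overline{T}T^*$, using once more that $\overline{T}T^*$ is self-adjoint. This gives $(TT^*)^*=\overline{T}T^*=TT^*$, and self-adjointness of $TT^*$ is then the statement $(TT^*)^*=TT^*$ itself. The only genuine subtlety — the step I expect to need the most care — is the very first inclusion $\overline{T}T^*\subset(TT^*)^*$: one should be slightly careful that $T^*$ need not be bounded, so the product rule for adjoints must be invoked in the correct (inclusion) direction, but since $TT^*$ is assumed densely defined this causes no real trouble. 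Everything else is a transcription of the earlier proof with "$\sigma\neq\C$" weakened to "no complex eigenvalue," which is all that proof actually used.
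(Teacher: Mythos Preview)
Your argument for the first equality $(TT^*)^*=\overline{T}T^*$ is exactly what the paper's one-line proof intends: choose a non-real $\lambda$ that is not an eigenvalue of $(TT^*)^*$, note that $\overline{T}T^*-\lambda I$ is boundedly invertible by the self-adjointness of $\overline{T}T^*$, and apply Lemma~1.3 of \cite{SCHMUDG-book-2012} to the inclusion $\overline{T}T^*-\lambda I\subset(TT^*)^*-\lambda I$. (One quibble: the hypothesis furnishes a \emph{specific} non-real non-eigenvalue, so writing ``e.g.\ $\lambda=i$'' is not quite safe unless you read the hypothesis as excluding \emph{all} non-real eigenvalues.)

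There is, however, a genuine gap in your derivation of the second equality $TT^*=\overline{T}T^*$. You assert that ``$TT^*$ is in particular closed (its adjoint being densely defined and itself closed),'' but a densely defined adjoint only makes $TT^*$ \emph{closable}; the chain $TT^*=\overline{TT^*}=(TT^*)^{**}=(\overline{T}T^*)^*=\overline{T}T^*$ therefore fails at the very first step. In Theorem~\ref{ADJ AA* A unclosed THM} the closedness of $TT^*$ came from the hypothesis $\sigma(TT^*)\neq\C$, and that is precisely the assumption being dropped here. Nor can the step be repaired: take $S$ unbounded self-adjoint and $T:=S|_{D(S^2)}$; then $\overline{T}=T^*=S$, so $\overline{T}T^*=S^2$ while $TT^*=S^2|_{D(S^3)}\subsetneq S^2$, and $(TT^*)^*=S^2$ has no non-real eigenvalue. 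What the argument genuinely delivers under the stated hypothesis is $(TT^*)^*=\overline{T}T^*=\overline{TT^*}$, i.e.\ essential self-adjointness of $TT^*$. The paper's own proof (``the rest of the proof is as that of Theorem~\ref{ADJ AA* A unclosed THM}'') is equally silent at this point, so the difficulty is not peculiar to your write-up.
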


\begin{rema}
Mutatis mutandis, a similar result holds for $T^*T$.
\end{rema}

\begin{proof}It suffices to let $\lambda$ be a complex number such
that $(TT^*)^*-\overline{\lambda} I$ is injective, and the rest of
the proof is as that of Theorem \ref{ADJ AA* A unclosed THM}.
\end{proof}

\begin{rema}
When $\sigma(TT^*)\neq\C$ and $\sigma(T^*T)\neq \C$ simultaneously,
then both $TT^*$ and $T^*T$ are densely defined, and besides
\[(TT^*)^*=TT^*\text{ and } (T^*T)^*=T^*T.\]
This may be consulted in \cite{Hardt-Konstantinov-Spectrum-product}
and \cite{Hardt-Mennicken-OP-Th-ADv-APP}.
\end{rema}

A. Devinatz, A. E. Nussbaum, and J. von Neumann obtained in
\cite{DevNussbaum-von-Neumann} the following maximality result (cf.
\cite{Meziane-Mortad-I}):

\begin{thm}\label{Devinatz-Nussbaum-von Neumann: T=T1T2}
Let $T$, $T_1$ and $T_2$ be self-adjoint operators. Then
\[T\subset T_1T_2 \Longrightarrow T=T_1T_2.\]
\end{thm}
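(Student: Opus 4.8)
The plan is to exploit the symmetry of the product of two self-adjoint operators together with the maximality of self-adjoint operators among symmetric operators. First I would observe that $T_1T_2$ need not be symmetric in general; what \emph{is} immediate is the reverse containment at the level of adjoints. So the first step is to recall that since $T_1,T_2$ are densely defined and $T\subset T_1T_2$ forces $T_1T_2$ to be densely defined, we have $T_2^*T_1^*\subset (T_1T_2)^*$, i.e.\ $T_2T_1\subset (T_1T_2)^*$ since $T_1,T_2$ are self-adjoint. Dually, from $T\subset T_1T_2$ and the self-adjointness of $T$ we get $T=T^*\supset (T_1T_2)^*\supset T_2T_1$; hence $T\supset T_2T_1$, and by the same argument with the roles of the indices swapped, $T\supset T_1T_2$ would follow if we knew $T\subset T_2T_1$ — but we don't yet, so this symmetric bookkeeping has to be done carefully.

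The cleaner route, which I would actually pursue, is: from $T\subset T_1T_2$ take adjoints to get $(T_1T_2)^*\subset T^*=T$. Combined with $T_2T_1=T_2^*T_1^*\subset (T_1T_2)^*$ this gives $T_2T_1\subset T$. Now take adjoints of $T_2T_1\subset T$: since all operators are densely defined we obtain $T^*=T\subset (T_2T_1)^*$, and $(T_2T_1)^*\supset T_1^*T_2^*=T_1T_2$. Therefore $T\subset (T_2T_1)^*$, but also $T\supset T_1T_2\supset$? — here I must be cautious: we have $T_1T_2\supset T$ by hypothesis and $T\subset (T_2T_1)^*$, and I want to sandwich $(T_2T_1)^*$ between $T$ and $T_1T_2$. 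In fact $(T_2T_1)^*\supset T_1T_2\supset T$ and $(T_2T_1)^*$ is the adjoint of a densely defined operator contained in the self-adjoint $T$, so $(T_2T_1)^*\supset T^{?}$... The point where this becomes decisive is: $T_2T_1\subset T=T^*$ implies, upon taking adjoints, $T=T^{**}=T^*\subset (T_2T_1)^{*}$, and since $T_2T_1$ itself satisfies $T_2T_1\subset T$, applying the hypothesis's logic once more (the statement is symmetric in which factor comes first only after we know $T_2T_1\subset T$, and then $T_2T_1$ plays the role of "$T$") we may invoke the result recursively, or more simply note that a symmetric operator contained in the self-adjoint $T$ extends to $T$, so it suffices to show $T_1T_2$, equivalently $T$, is \emph{symmetric}.

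Concretely: we have shown $T_2T_1\subset T$. Taking adjoints and using $T=T^*$, $T\subset (T_2T_1)^* $. But $(T_2T_1)^*\supset T_1^*T_2^*=T_1T_2\supset T$, so in particular $(T_2T_1)^*$ is an extension of the self-adjoint operator $T$; since $T$ is self-adjoint it is \emph{maximal symmetric}, so if we can check $(T_2T_1)^*$ is symmetric we are done — but $(T_2T_1)^*$ is an adjoint, hence closed, and it contains the self-adjoint $T$, so $T\subset (T_2T_1)^*\subset (T_2T_1)^{**}=\overline{T_2T_1}\subset \overline{T}=T$, giving $(T_2T_1)^*=T$; feeding this back, $T=(T_2T_1)^*\supset T_1T_2\supset T$ forces $T_1T_2=T$, which is the claim. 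The main obstacle I anticipate is precisely this juggling of inclusions and adjoints to ensure the chain $T\subset (T_2T_1)^*\subset \overline{T_2T_1}\subset T$ genuinely closes up without a circular use of the conclusion; the key enabling facts are that $T_1,T_2,T$ are densely defined (so adjoints behave), that $S\subset R$ with $S$ densely defined yields $R^*\subset S^*$, and that a self-adjoint operator admits no proper symmetric (equivalently, no proper self-adjoint) extension.
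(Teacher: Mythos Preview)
The paper does not actually prove this theorem; it is quoted as a classical result of Devinatz, Nussbaum, and von Neumann, and the paper then proves a related proposition under the extra hypothesis $\sigma(T_1T_2)\neq\C$. So there is no in-paper proof to compare against, but your attempt has a genuine gap that should be named.

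The chain $T\subset (T_2T_1)^*\subset (T_2T_1)^{**}=\overline{T_2T_1}\subset \overline{T}=T$ breaks at the second inclusion. For a densely defined symmetric operator $S$ one has $S\subset S^{**}=\overline{S}\subset S^*$, so the valid direction is $(T_2T_1)^{**}\subset (T_2T_1)^*$, not the reverse. Writing $(T_2T_1)^*\subset (T_2T_1)^{**}$ amounts to asserting that $(T_2T_1)^*$ is symmetric, which forces $T_2T_1$ to be essentially self-adjoint; but that is essentially the conclusion you are after, so the argument is circular at exactly the decisive point. A secondary issue: the phrase ``since all operators are densely defined'' is not justified for $T_2T_1$; from $T_2T_1\subset T$ you only obtain $D(T_2T_1)\subset D(T)$, not density, so taking the adjoint of $T_2T_1\subset T$ is not yet licit.

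Pure adjoint-juggling does not seem to close the loop here. The classical route is a resolvent argument: for non-real $\lambda$, show that $T_1T_2-\lambda I$ is injective. If $T_1T_2u=\lambda u$ with $u\in D(T_1T_2)$, then $T_2u\in D(T_1)$ and $T_1T_2u=\lambda u\in D(T_2)$, so $T_2u\in D(T_2T_1)$ and $T_2T_1(T_2u)=T_2(T_1T_2u)=\lambda\,T_2u$; since $T_2T_1\subset T$ is symmetric and $\lambda\notin\R$, this forces $T_2u=0$, whence $\lambda u=T_1T_2u=0$ and $u=0$. Injectivity of $T_1T_2-\lambda I$ together with the surjectivity of $T-\lambda I\subset T_1T_2-\lambda I$ then yields $T=T_1T_2$.
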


A somewhat similar idea as above may be reused to reach an akin
conclusion, namely:

\begin{pro}
Let $T$, $T_1$ and $T_2$ be unbounded operators. If $T$ is
self-adjoint and $T\subset T_1T_2$, then
\[\sigma(T_1T_2)\neq\C \Longleftrightarrow T=T_1T_2.\]
\end{pro}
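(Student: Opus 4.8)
The plan is to exploit the maximality of self-adjoint operators together with the spectral dichotomy already used in this section. The implication ``$\Leftarrow$'' is immediate: if $T=T_1T_2$, then since $T$ is self-adjoint, $\sigma(T_1T_2)=\sigma(T)\subset\R\neq\C$. So the substance lies in ``$\Rightarrow$'': assume $\sigma(T_1T_2)\neq\C$ and $T\subset T_1T_2$, and deduce equality. First I would record that $T\subset T_1T_2$ forces $T_1T_2$ to be densely defined, since $D(T)\subset D(T_1T_2)$ and $D(T)$ is dense ($T$ being self-adjoint). Next, the hypothesis $\sigma(T_1T_2)\neq\C$ means there is $\lambda_0\in\C$ with $T_1T_2-\lambda_0 I$ boundedly invertible; in particular $T_1T_2$ is closed.

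The key step is to promote the inclusion $T\subset T_1T_2$ to an equality by an injective/surjective argument of exactly the type used in the proof of Theorem \ref{ADJ AA* A unclosed THM}. Fix $\lambda_0\in\rho(T_1T_2)$. Then $T_1T_2-\lambda_0 I$ is in particular injective, hence so is its restriction. On the other side, since $T$ is self-adjoint, $\sigma(T)\subset\R$, so I may choose $\lambda_0$ to be non-real (the resolvent set $\rho(T_1T_2)$ is open and nonempty, so it meets $\C\setminus\R$ unless $\sigma(T_1T_2)\supset\C\setminus\R$; if $\sigma(T_1T_2)$ happens to contain all non-real points one argues slightly differently, see below). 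For such a non-real $\lambda_0$, the operator $T-\lambda_0 I$ is surjective onto $H$ because $T$ is self-adjoint. From $T-\lambda_0 I\subset T_1T_2-\lambda_0 I$ with the left side surjective and the right side injective, Lemma 1.3 in \cite{SCHMUDG-book-2012} gives $T-\lambda_0 I = T_1T_2-\lambda_0 I$, i.e. $T=T_1T_2$.

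The main obstacle is the case-analysis around the choice of $\lambda_0$: one needs a scalar that simultaneously lies in $\rho(T_1T_2)$ (to get injectivity) and is non-real (to get surjectivity of $T-\lambda_0 I$). Since $\rho(T_1T_2)$ is a nonempty open set, if it contains any non-real point we are done as above. If instead $\rho(T_1T_2)\subset\R$, then $\sigma(T_1T_2)\supset\C\setminus\R$, yet $\sigma(T_1T_2)\neq\C$ forces $\sigma(T_1T_2)$ to be one of the two closed half-planes or a proper subset thereof containing $\C\setminus\R$; here pick $\lambda_0$ real with $T_1T_2-\lambda_0 I$ boundedly invertible, note $T-\lambda_0 I$ is then injective (as a restriction) and also surjective precisely when $\lambda_0\in\rho(T)$, and since $T$ is self-adjoint with $\sigma(T)\subset\R$ one still has ample real points available in $\rho(T)\cap\rho(T_1T_2)$ by comparing resolvent sets through the inclusion. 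In every case the conclusion $T=T_1T_2$ follows, and one then remarks that $T_1T_2$, being equal to the self-adjoint $T$, is itself self-adjoint with spectrum in $\R$, which is consistent with—and in fact refines—the starting hypothesis $\sigma(T_1T_2)\neq\C$.
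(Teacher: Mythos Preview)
Your core argument is exactly the paper's: pick $\lambda_0\in\rho(T_1T_2)$, use that $T-\lambda_0 I$ is boundedly invertible by self-adjointness, and conclude $T-\lambda_0 I=T_1T_2-\lambda_0 I$ from the inclusion (the paper phrases this as equality of the everywhere-defined inverses; you cite Lemma~1.3 in \cite{SCHMUDG-book-2012}; same content).

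Your case analysis, however, is both unnecessary and, in its second branch, not rigorous. The ``bad case'' $\rho(T_1T_2)\subset\R$ simply cannot occur: once $\rho(T_1T_2)\neq\varnothing$ the operator $T_1T_2$ is closed (as you yourself note), so $\rho(T_1T_2)$ is a nonempty \emph{open} subset of $\C$, and a nonempty open subset of $\C$ can never be contained in $\R$. Hence a non-real $\lambda_0\in\rho(T_1T_2)$ is always available, and your first paragraph already finishes the proof. Your attempted treatment of the phantom case---``ample real points available in $\rho(T)\cap\rho(T_1T_2)$ by comparing resolvent sets through the inclusion''---is hand-waving: the inclusion $T\subset T_1T_2$ gives no direct comparison of resolvent sets on $\R$, and there is no reason a priori that the real parts of $\rho(T)$ and $\rho(T_1T_2)$ should overlap. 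Drop that paragraph entirely and insert the one-line observation that $\rho(T_1T_2)$, being open and nonempty, must meet $\C\setminus\R$.
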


\begin{proof}
Pick a complex $\lambda$ such that $T_1T_2-\lambda I$ is boundedly
invertible. By the self-adjointness of $T$, $T-\lambda I$ is always
boundedly invertible. So
\[T\subset T_1T_2\Longrightarrow T-\lambda I\subset T_1T_2-\lambda I.\]
Hence $(T-\lambda I)^{-1}=(T_1T_2-\lambda I)^{-1}$, as they are both
everywhere defined. Thus, $T-\lambda I=T_1T_2-\lambda I$, or merely
$T=T_1T_2$. This shows the implication "$\Rightarrow$". The backward
implication is evident, and the proof is complete.
\end{proof}

\section{Counterexamples}

\begin{enumerate}
  \item (Cf. Theorem \ref{T*T=p(T) THM}) The equation $T^*T=p(T)$, if $\deg p(z)=2$ (and without any other condition on $p(z)$), does not
always yield the self-adjointness of $T$, even when $\dim H<\infty$.
Remember that in a such context, quasinormality coincide with
normality, so the matrix must first be normal.
\begin{exa}
On a bi-dimensional space, consider
\[T=\left(
      \begin{array}{cc}
        0 & 1 \\
        -1 & 0 \\
      \end{array}
    \right).
\]
If $p(z)=z^2+2$, then it is seen that
\[T^*T=\left(
         \begin{array}{cc}
           1 & 0 \\
           0 & 1 \\
         \end{array}
       \right)=T^2+2I.
\]
Observe in the end that the complex number $i$, which is an
eigenvalue for $T$, is such that $p(i)\geq0$.
\end{exa}

The next example shows that even if $p(z)$ has two real roots, we
still require the positivity of the leading coefficient. Indeed:
\begin{exa}
Let $T$ be a non-zero skew-adjoint matrix, i.e. $T^*=-T$ and
$T^*\neq T$. Then $T^*T=-T^2$, yet $T$ is not self-adjoint.
\end{exa}

\item Corollary 3.2 in \cite{Uchiyama-1993-QUASINORMAL} was
extremely helpful to establish some of the results in this paper.
Recall that it states that a quasinormal operator $T$ is normal if
(and only if) $\ker T=\ker T^*$.

It would therefore be practical to have a result like it for other
classes of operators. The answer is, alas, negative. We first give
an example borrowed from Answer 21.2.5 in \cite{Mortad-cex-BOOK},
where readers can find more details about it.

\begin{exa}Consider $Tf(x)=xf(x)$ on
\[D(T)=\left\{f\in L^2(\R):xf\in
L^2(\R),\int_{\R}f(x)dx=0\right\}.\] Then $T$ is densely defined,
closed, and symmetric. However, $T$ is not self-adjoint, i.e.,
$D(T)\neq D(T^*)$. It is also seen that
\[\ker T=\ker T^*=\{0\}.\]
\end{exa}
In other words, the condition $\ker T=\ker T^*~(=\{0\})$ combined
with the symmetricity and the closedness of $T$ do not suffice to
force $T$ to be self-adjoint. In fact, $T$ is not even normal.

Now, a densely defined symmetric operator is formally normal. Also,
a symmetric operator always has a self-adjoint extension, possibly
in a larger Hilbert space (see, e.g., Proposition 3.17 in
\cite{SCHMUDG-book-2012}). Put differently, a symmetric operator is
subnormal, and it is patently hyponormal. That being said, and due
to the above example, a formally normal or subnormal operator $T$
with the condition $\ker T=\ker T^*~(=\{0\})$ need not be normal.
\item As alluded to above, below we give an unbounded self-adjoint
positive operator whose square roots have pairwise different
domains.

\begin{exa}\label{square roots unbd s.a. diffe. domain EXA}
Let $T$ be an unbounded, self-adjoint, and positive operator with
domain $D(T)\subsetneq H$, then define $S=\left(
                    \begin{array}{cc}
                      T & 0 \\
                      0 & T \\
                    \end{array}
                  \right)$, where $D(S)=D(T)\oplus D(T)$.
It is plain that $S$ too is unbounded, self-adjoint, and positive.
Letting $\sqrt{T}$ represent the unique self-adjoint positive square
root of $T$, readers may readily check that each of
\[A=\left(
                        \begin{array}{cc}
                          0 & T \\
                          I & 0 \\
                        \end{array}
                      \right),B=\left(
                        \begin{array}{cc}
                          0 & I \\
                          T & 0 \\
                        \end{array}
                      \right) \text{, and }C=\left(
                    \begin{array}{cc}
                      \sqrt{T} & 0 \\
                      0 & \sqrt{T} \\
                    \end{array}
                  \right)\]
is in effect a square root of $S$. Observe in the end that
$D(A)=H\oplus D(T)$, $D(B)=D(T)\oplus H$, and
$D(C)=D(\sqrt{T})\oplus D(\sqrt{T})$. In other words, these three
domains are pairwise different.
\end{exa}

  \item It is natural to ask whether the identity $(TT^*)^*=TT^*$, which
holds for densely defined closed operators $T$, still holds for
closable operators.  The following example answers this question in
the negative.
\begin{exa}\label{Tarcsay'sssssssssssssssssss EXA}

Let $A$ be a densely defined positive operator in a Hilbert space
$H$ such that it is not essentially self-adjoint. Assume further
that ran$A$ is dense in $H$. Define an inner product space on $\ran
A$ by
\[\langle Ax,Ay\rangle _{A}=\langle Ax,y\rangle,~x,y\in D(A).\]

Denote the completion of this pre-Hilbert space by $H_A$. Define the
canonical embedding operator $T:H_A\supseteq \text{ran}A\to H$ by
\[T(Ax):=Ax,~x\in D(A).\]
It may then be shown that $D(A)\subset D(T^*)$ and
\[T^*x=Ax\in H_A,~x\in D(A).\]
In particular, $T:H_A\supset \ran A\to H$ is a densely defined and
closable linear operator. Besides, $T^{**}T^*$ is a self-adjoint
positive extension of $A$.

On the other hand, notice that
\[\ker T^*=(\ran T)^{\perp}=(\ran A)^{\perp}=\{0\}.\]
In other words, $T^*$ is one-to-one. So, $T^*y\in\ran A$ (=$D(T)$),
where $y\in D(T^*)$, implies $y\in D(A)$. Accordingly,
\[D(TT^*)=\{y\in D(T^*):T^*y\in D(T)\}=D(A).\]
Thus, $TT^*=A$. This signifies that
\[(TT^*)=A^*\neq T^{**}T^*=\overline{T}T^*\]
for the latter operator is self-adjoint, whilst the former is not.
\end{exa}
  \item The preceding example may be beefed up to obtain a stronger
counterexample, namely:

\begin{exa}
Let $T$ be a closable densely defined operator such that
$(TT^*)^*\neq \overline{T}T^*$ (as just before), then set
\[S=\left(
      \begin{array}{cc}
        0 & T \\
        T^* & 0 \\
      \end{array}
    \right)
\]
with $D(S)=D(T^*)\oplus D(T)$. So $S$ is densely defined, and since
$S^*=\left(
      \begin{array}{cc}
        0 & \overline{T} \\
        T^* & 0 \\
      \end{array}
    \right)$, $S$ is symmetric.

Now,
\[SS^*=\left(
         \begin{array}{cc}
           TT^* & 0 \\
           0 & T^*\overline{T} \\
         \end{array}
       \right)\text{ and }\overline{S}S^*=\left(
         \begin{array}{cc}
           \overline{T}T^* & 0 \\
           0 & T^*\overline{T} \\
         \end{array}
       \right).\]
But
\[(SS^*)^*=\left(
         \begin{array}{cc}
           (TT^*)^* & 0 \\
           0 & (T^*\overline{T})^* \\
         \end{array}
       \right)=\left(
         \begin{array}{cc}
           (TT^*)^* & 0 \\
           0 & T^*\overline{T} \\
         \end{array}
       \right)
\]
because $T^*\overline{T}$ is self-adjoint. Since we already know
that $(TT^*)^*\neq \overline{T}T^*$, it ensues that $(SS^*)^*\neq
\overline{S}S^*$ and yet $S$ is densely defined and symmetric.
\end{exa}
  \item We have been assuming that $TT^*$ is densely defined as, in general,
$D(TT^*)$ could be non-dense. For instance, in
\cite{Mortad-TRIVIALITY POWERS DOMAINS} (or \cite{Mortad-cex-BOOK}),
we have found an example of a densely defined $T$ that obeys
\[D(T^2)=D(T^*)=D(TT^*)=D(T^*T)=\{0\}.\]

Obviously such an operator $T$ cannot be closable. So, does the
closability of $T$ suffice to make $TT^*$ densely defined? The
answer is negative even when $T$ is symmetric. This is seen next:

\begin{exa}
Let $E$ be a dense linear proper subspace of $H$, let $u$ be a
non-zero element in $H$ but not in $E$, and define $T$ to be
projection on the 1-dimensional subspace spanned by $u$ with
$D(T)=E$. Then $T$ is a bounded, non-everywhere defined, unclosed,
and symmetric operator. Also, $T^*$ is the same projection, defined
on the entire $H$. Then $T^*x$ is in $E$ only if it is 0. So,
\[D(TT^*)=\{u\}^{\perp},\] which is not dense.
\end{exa}

\begin{rema}
In the previous example, one sees that $D(T^*T)$ is dense. This is,
however, not peculiar to this example. Indeed, if $T$ is a densely
defined, symmetric and bounded (non-everywhere defined) operator
with domain $D(T)\subsetneq H$, then $D(T^*T)$ is always dense. This
is easy to see, as $T\subset T_H$ where $T_H$ is the extension of
$T$ to all of $H$. Then $T_H^*\subset T^*$, and so $D(T^*)=H$.
Therefore,
\[D(T^*T)=D(T),\]
which is dense.
\end{rema}
\item Last but not least, we supply a densely defined (unclosed) symmetric
operator $T$ such that $T^*T$ is not densely defined. The same idea
could have been used to construct an akin counterexample in the case
of $TT^*$.

\begin{exa}
Let $A$ and $B$ be self-adjoint operators, where $B\in B[L^2(\R)]$,
yet to be chosen. Then, define
\[T=\left(
      \begin{array}{cc}
        0 & AB \\
        BA & 0 \\
      \end{array}
    \right),
\]
with $D(T)=D(A)\oplus D(AB)$. A priori, $BA$ need not be closed, and
we will choose $BA$ as such. We also take $AB$ to be densely
defined. Then $T$ will be densely defined, and symmetric for
\[T^*=\left(
      \begin{array}{cc}
        0 & AB \\
        (AB)^* & 0 \\
      \end{array}
    \right)\supset \left(
      \begin{array}{cc}
        0 & AB \\
        BA & 0 \\
      \end{array}
    \right)=T.\]
Moreover,
\[T^*T=\left(
         \begin{array}{cc}
           AB^2A & 0 \\
           0 & (AB)^*AB \\
         \end{array}
       \right).
\]
We are done as soon as $D(AB^2A)$ is not dense. Now, we give a pair
of $A$ and $B$ that meet the above conditions. Some details about
the remaining portion of this example may have to be consulted in
Answer 21.2.24 in \cite{Mortad-cex-BOOK} (and also \cite{KOS}).

Consider the self-adjoint operator $S$
\[Sf(x)=e^{\frac{x^2}{2}}f(x),\]
defined on $D(S)=\{f\in L^2(\R):~e^{\frac{x^2}{2}}f\in L^2(\R)\}$.
Then, set $A:=\mathcal{F}^*S\mathcal{F}$, where $\mathcal{F}$ is the
usual $L^2(\R)$-Fourier transform. So, $A$ is also self-adjoint.

Clearly $S$ is boundedly invertible (hence so is $A$). Set
\[B^2f(x):=S^{-1}f(x)=e^{\frac{-x^2}{2}}f(x),\]
which is defined from $L^2(\R)$ onto $D(S)$.

We know that $D(AB^2)$ is trivial if $D(A)\cap \ran(B^2)=\{0\}$, and
$B^2$ is injective (which is the case). But,
\[D(A)\cap
\ran(B^2)=D(A)\cap D(S)=\{0\}\] as this is already available to us
from Answer 21.2.17 in \cite{Mortad-cex-BOOK} (or \cite{KOS}). So,
$D(AB^2)=\{0\}$. Thus,
\[D(AB^2A)=\{f\in D(A):Af\in D(AB^2)=\{0\}\}=\ker A=\{0\},\]
i.e., $D(AB^2A)$ is not dense in $L^2(\R)$. It only remains to check
that $D(AB)$ is indeed dense in $L^2(\R)$. Observe that by
uniqueness of the positive square root that
\[Bf(x)=e^{\frac{-x^2}{4}}f(x),\]
with $D(B)=L^2(\R)$. Now
\[D(AB)=\{f\in L^2(\R):\widehat{e^{\frac{-x^2}{4}}f}\in L^2(\R), e^{\frac{x^2}{2}}\widehat{e^{\frac{-x^2}{4}}f}\in L^2(\R)\},\]
which is dense by, say, an idea borrowed from the proof of Corollary
9 in \cite{KOS}.

\end{exa}

\begin{rema}
It seems that there is no magic result that guarantees the density
of $D(TT^*)$ or $D(T^*T)$, when $T$ is symmetric, except perhaps to
assume that $T^2$ is densely defined.
\end{rema}

\end{enumerate}

\section*{Acknowledgement}

The author wishes to thank Professor Zsigmond Tarcsay for Example
\ref{Tarcsay'sssssssssssssssssss EXA}, which was based on a
construction by Z. Sebestyén and J. Stochel (see
\cite{Sebestyen-Stochel-Restrictions-positive}).

\end{document}